\documentclass[11pt]{amsart}

\usepackage[T1]{fontenc}
\usepackage{lmodern}
\usepackage[expansion=false]{microtype}
\usepackage{amsmath,amssymb,amsthm,mathtools}
\usepackage{mathrsfs}
\usepackage{booktabs}
\usepackage{tikz}
\usetikzlibrary{arrows.meta}
\usepackage[hidelinks]{hyperref}
\hypersetup{
  pdftitle={Joint level--weight murmurations: prime averaging and the cubic pointwise range},
  pdfauthor={Julien Cardi},
  pdfsubject={Analytic number theory and modular forms},
  pdfkeywords={holomorphic newforms, murmurations, Atkin--Lehner trace,
    Hurwitz class numbers, character sums}
}

\numberwithin{equation}{section}
\newtheorem{theorem}{Theorem}[section]
\newtheorem{proposition}[theorem]{Proposition}
\newtheorem{lemma}[theorem]{Lemma}
\newtheorem{corollary}[theorem]{Corollary}
\theoremstyle{definition}

\theoremstyle{remark}
\newtheorem{remark}[theorem]{Remark}

\newcommand{\R}{\mathbb R}
\newcommand{\eps}{\varepsilon}
\newcommand{\1}{\mathbf 1}
\newcommand{\M}{\mathcal M}
\newcommand{\Kern}{\mathcal K}
\providecommand{\doi}[1]{doi: \href{https://doi.org/#1}{#1}}

\title[Joint level--weight murmurations]
{Joint level--weight murmurations: prime averaging and the cubic pointwise range}
\author{Julien Cardi}
\address{Independent researcher, Marseille, France}
\email{265julien@gmail.com}
\date{}
\subjclass[2020]{Primary 11F11, 11F30; Secondary 11M20, 11N13}
\keywords{Holomorphic newforms, murmurations, Atkin--Lehner trace,
Hurwitz class numbers, character sums}

\begin{document}

\begin{abstract}
Let $N$ range over squarefree levels $N\asymp X$, and let the even weight $k$
vary in a smooth window $k\asymp K$.  We study natural root-number-weighted
traces of Hecke eigenvalues at primes $p\asymp XK^2$.  First, uniformly for
$X^{\eps_0}\leq K\leq X^{3-\eps_0}$, we prove a fixed-prime asymptotic whose
main term is expressed through Zubrilina's murmuration density.  The exponent
$3$ is the endpoint of our absolute treatment of the nonzero Poisson
frequencies.  Second, after averaging the primes with logarithmic weight, we
obtain unconditionally, throughout every fixed polynomial range
$X^{\eps_0}\leq K\leq X^{A_0}$, an explicit atomic limiting measure.  The
fixed-prime argument groups the local Fourier expansion by exact additive
conductor and uses Burgess truncation, whereas the prime-averaged argument
applies a smooth Barban--Davenport--Halberstam estimate.
\end{abstract}

\maketitle

\section{Introduction}

For a normalized holomorphic newform $f$ of squarefree level $N$, even
weight $k$, and trivial character, write $\lambda_f(n)$ for its normalized
Hecke eigenvalues and $\epsilon_f\in\{\pm1\}$ for its root number.
Murmurations are correlations between $\lambda_f(p)$ and $\epsilon_f$ when
the prime $p$ is placed at the scale of the analytic conductor.  The
phenomenon was first observed experimentally for elliptic curves by He,
Lee, Oliver, and Pozdnyakov \cite{HLOPMurmurations}.  Zubrilina
\cite{ZubrilinaMurmurations} proved a level-aspect correlation for
holomorphic newforms of fixed weight.

The complementary weight aspect at level one was developed by Bober,
Booker, Lee, and Lowry-Duda \cite[Theorem~1.1]{BBLLMurmurations}, who use
natural counting weights and a sharp weight interval; their stated
prime-averaged theorem assumes GRH for Dirichlet and modular $L$-functions.
Kuan and Lesesvre \cite[Theorem~1]{KuanLesesvre} use Petersson weights and a
smooth Gaussian window, and assume GRH only for Dirichlet $L$-functions.
The two spectral weights lead to different limiting densities.  Related
directions now include Maass forms \cite{BookerEtAlMaass}, $p$-power
coefficients \cite{KunduMuller}, ratios-conjecture and
approximate-functional-equation methods \cite{CowanRatios}, the depth
aspect \cite{BurrinDepth,TomczakDepth}, and elliptic curves ordered by
height \cite{SawinSutherland}.

Here the squarefree level $N\asymp X$ and the weight $k\asymp K$ tend to
infinity together, with natural counting weights.  The contribution of the
paper is fourfold.  First, the level average gives a fixed-prime asymptotic,
uniform up to the cubic range $K\leq X^{3-\eps_0}$.  Second, a subsequent
logarithmically weighted prime average yields an unconditional atomic law
throughout every fixed polynomial range in $K$.  Third, the trace formula is
used directly, before introducing harmonic weights, so the resulting density
belongs to the natural family rather than to a Petersson-weighted family.
Finally, the two ranges are traced to two different analytic mechanisms.  At
a fixed prime, Poisson summation in the level produces nonzero additive
frequencies.  When the prime is averaged first, these frequencies are replaced
by the mean-square distribution of primes in arithmetic progressions.

Fix two nonnegative, nonzero smooth functions $W$ and $V$, compactly
supported in $(0,\infty)$.  For a smooth compactly supported function
$F$ on $(0,\infty)$, put
\begin{align}
 \mathcal N_{X,K}(F)
  ={}&\sum_{N\geq1}^{\mathrm{sf}}W(N/X)
  \sum_{\substack{k\geq2\\ k\ {\rm even}}}V(k/K)
  \sum_{p\nmid N}(\log p)\sqrt p
  \sum_{f\in H_k^*(N)}\lambda_f(p)\epsilon_f \notag\\
  &\hspace{35mm}\times
  F\!\left(\frac{p}{N((k-1)/(4\pi))^2}\right),
       \label{eq:intro-numerator}\\
 \mathcal C_{X,K}
  ={}&\sum_{N\geq1}^{\mathrm{sf}}W(N/X)
  \sum_{\substack{k\geq2\\ k\ {\rm even}}}V(k/K) \notag\\
  &\hspace{35mm}\times
  N\left(\frac{k-1}{4\pi}\right)^2
  \dim S_k^{\rm new}(N).                         \label{eq:intro-normalizer}
\end{align}
Here and below a superscript $\mathrm{sf}$ restricts a sum to squarefree
integers, and $p$ always denotes a prime.  The normalizer has order
$X^3K^4$.

For a prime $\ell$, set
\[
 D_\ell=\ell^4-2\ell^2-\ell+1,
 \qquad
 Q(d)=\mu^2(d)\prod_{\ell\mid d}\frac{\ell^2}{D_\ell},
\]
and define
\begin{equation}\label{eq:W0-intro}
 W_0=\prod_\ell
 \frac{(\ell^2-1)^2}{\ell^4-2\ell^2+\ell}.
\end{equation}
The product in \eqref{eq:W0-intro} is convergent.  The measure appearing in
the limit is
\begin{equation}\label{eq:atomic-measure-intro}
 \mu_{\rm at}
 =\sum_{\substack{q\geq1\ {\rm squarefree},\ a\geq1\\(q,a)=1}}
 W_0\left(\frac qa\right)^4
 \prod_{\ell\mid q}\frac{\ell}{(\ell^2-1)^2}
 \delta_{(q/a)^2}.
\end{equation}
It is locally finite on $(0,\infty)$.  Indeed,
\[
 \prod_{\ell\mid q}\frac{\ell}{(\ell^2-1)^2}\ll q^{-3},
\]
and on a fixed compact set of values of $(q/a)^2$ there are $O(q)$
possible integers $a$ for each $q$.  The resulting majorant is
$\sum_q q^{-2}$.

\begin{theorem}[Prime-averaged murmuration law]\label{thm:main}
Fix $\eps_0,A_0>0$, $F\in C_c^\infty((0,\infty))$, and $B>0$.
Uniformly for $X^{\eps_0}\leq K\leq X^{A_0}$,
\begin{equation}\label{eq:main-theorem}
 \frac{\mathcal N_{X,K}(F)}{\mathcal C_{X,K}}
 =\int_0^\infty F(v)\,d\mu_{\rm at}(v)
 +O_{B,F,V,W,\eps_0,A_0}\big((\log X)^{-B}\big).
\end{equation}
Moreover,
\begin{equation}\label{eq:denominator-theorem}
\begin{split}
 \frac{1}{\mathcal C_{X,K}}
 \sum_{N\geq1}^{\mathrm{sf}}W(N/X)
 \sum_{\substack{k\geq2\\k\ {\rm even}}}V(k/K)
 \dim S_k^{\rm new}(N)
 \sum_{p\nmid N}(\log p)\\
 {}\times
 F\!\left(\frac{p}{N((k-1)/(4\pi))^2}\right)
 =\int_0^\infty F(v)\,dv+O_{B,F,V,W,\eps_0,A_0}((\log X)^{-B}).
\end{split}
\end{equation}
\end{theorem}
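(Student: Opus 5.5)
The plan is to start from the Eichler--Selberg trace formula for the Atkin--Lehner-twisted Hecke operator $T_p W_N$ on $S_k^{\rm new}(N)$. For squarefree $N$ one has $\epsilon_f=(-1)^{k/2}\mu(N)\lambda_f(N)\sqrt N\cdot(\text{sign})$, so $\sum_f\lambda_f(p)\epsilon_f$ is, up to normalization, the trace of $T_pW_N$. After Möbius inversion from the full space to newforms, which is harmless for squarefree $N$, this trace equals a main term of Hurwitz class numbers
\[
 H(4pN-t^2N^2)=H\bigl(N(4p-t^2N)\bigr),\qquad |t|<2\sqrt{p/N},
\]
weighted by Gegenbauer polynomial values $P_k(tN^{1/2},pN)$, plus hyperbolic and identity terms. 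At $p\asymp XK^2$ the range $|t|\le 2\sqrt{p/N}\asymp K$ is large, and $p/(N((k-1)/4\pi)^2)=v$ fixes the ratio $t/(\sqrt{p/N})$. The Gegenbauer weight averaged over $k$ with $V(k/K)$ then behaves like a Bessel-type kernel concentrated at $t\approx$ const. I will first perform the $k$-sum by stationary phase or Poisson summation, using smoothness of $V$. This isolates an oscillatory factor $e^{ik\theta}$ with $2\sqrt p\cos\theta = t\sqrt N$. Averaging over even $k$ forces $\theta$ to be near $0$ or $\pi$ up to $O(1/K)$, i.e. $|t|\sqrt N\approx 2\sqrt p$. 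The discriminant $N(4p-t^2N)$ is therefore small, of size $\ll X^2K^2\cdot K^{-2}$ relative to the scale. This is the murmuration mechanism and yields the scaled variable $v$.

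Next I will write $H(-D)$ through $L(1,\chi_D)$ via the class number formula, and expand $L(1,\chi)$ as a short Dirichlet sum $\sum_m \chi_D(m)/m$ truncated at $\log$-power length, with Burgess or large sieve control of the tail. The main term then involves sums over $N$ squarefree and $p$ prime of $\chi_{N(4p-t^2N)}(m)$, which depend on $p$ only modulo $m$ (times $4$). Averaging $p$ with $\log p$ over arithmetic progressions modulo $mN$-type moduli is where the prime average is used. I will replace $\sum_{p\equiv b\ (q)}\log p\,F(\cdot)$ by its expected value $\frac{1}{\varphi(q)}\int F$, and control the error in mean square over moduli $q$ by a smooth Barban--Davenport--Halberstam estimate, valid for $q\le p(\log p)^{-C}$. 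The remaining local densities, computed prime by prime over squarefree $N$ and residues, should produce $W_0$, $Q(d)$ and the factors $\prod_{\ell\mid q}\ell/(\ell^2-1)^2$. The atoms at $(q/a)^2$ arise because the surviving $t$ must satisfy $t^2N\approx 4p$ with $4p-t^2N$ tiny: writing the relevant rational ratio of $t$ and $2\sqrt{p/N}$ produces atoms at rational squares. Normalizing by $\mathcal C_{X,K}$, computed by the dimension formula $\dim S_k^{\rm new}(N)\approx\frac{k-1}{12}\varphi^{\rm new}(N)$, gives \eqref{eq:main-theorem}. The bound $K\le X^{A_0}$ enters only through $\log p\asymp\log X$, which keeps the Siegel--Walfisz/BDH savings of $(\log X)^{-B}$ uniform.

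The denominator statement \eqref{eq:denominator-theorem} is easier. It needs only the prime number theorem with classical error, in the form $\sum_p\log p\,F(p/Y)=Y\int F+O(Y(\log Y)^{-B})$ with $Y=N((k-1)/4\pi)^2$, followed by summation against $W,V$ and $\dim S_k^{\rm new}(N)$ and division by $\mathcal C_{X,K}$.

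The main obstacle will be the error terms. The first is the hyperbolic (split $t$) terms together with the tail of $L(1,\chi_D)$ for the non-exceptional discriminants. The second is controlling the non-principal contribution uniformly in moduli up to the BDH range. The moduli from the $m$-expansion combined with $N\asymp X$ must stay below $p(\log p)^{-C}\asymp XK^2(\log)^{-C}$, which holds since $K\ge X^{\eps_0}$. I would first verify this modulus bookkeeping carefully, and handle any large-$m$ leftovers by Burgess truncation.
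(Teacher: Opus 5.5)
Your outline has the right skeleton: the trace of $T_pW_N$, the class-number formula, the expansion of $L(1,\chi_D)$, Barban--Davenport--Halberstam in the prime, local densities, and the dimension formula. You also handle \eqref{eq:denominator-theorem} via the prime number theorem exactly as the paper does. But the central analytic step is backwards, and that is a genuine gap.

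\textbf{The weight average localizes at the wrong place.} Since $\epsilon_f=(-1)^{k/2}w_N(f)$, the elliptic term carries $(-1)^{k/2-1}U_{k-2}(x)$ with $x=r\sqrt N/(2\sqrt p)$. Writing $x=\sin\theta$, this is $\cos((k-1)\theta)/\sqrt{1-x^2}$. Poisson summation over odd $k-1$ then localizes at $\theta\approx0$, i.e.\ at $r=O(1)$. Indeed $\Kern_r(N,p)\ll K(1+r)^{-A}$, and near the edge $x=1$ the sum becomes an alternating sum of size $O(K^{-A})$ (Proposition~\ref{prop:chebyshev-kernel}). Your localization at $\theta\in\{0,\pi\}$ for $2\sqrt p\cos\theta=t\sqrt N$ is what you get by dropping the factor $(-1)^{k/2}$ that you yourself put into $\epsilon_f$. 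Note also that you first say the kernel is ``concentrated at $t\approx$ const'' and then place it at the edge.

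The surviving discriminants are therefore $N(4p-r^2N)\asymp X^2K^2$, not small, and every later scale changes:
\begin{itemize}
\item $L(1,\chi_D)$ cannot be truncated unconditionally at logarithmic length. Burgess forces length $T=(XK)^{1/2+\eta}$.
\item The prime average then needs the moduli $8f^2m\leq8D^2T$ to be $o(XK^2)$. This is where $K\geq X^{\eps_0}$ actually enters, through the relative error $D(T/P)^{1/2}$.
\item The level must not enter the modulus. For fixed $N$ the symbol depends on $p$ only modulo $8f^2m$, and $N$ is summed outside. Moduli of size $mN$ would exceed $P$ once $K$ is below about $X^{1/3}$.
\item The hyperbolic terms you flag are empty for squarefree $N\geq2$ and $p\nmid N$.
\end{itemize}

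\textbf{The mechanism producing the atoms is missing.} The atoms do not come from $t^2N\approx4p$ or from a rational ratio of $t$ to $2\sqrt{p/N}$. The variable $v$ fixes $(k-1)/\sqrt{p/N}$, not $t/\sqrt{p/N}$. In the paper the chain is:
\begin{enumerate}
\item After the prime average and the completion of the signed $f,m$ series (Propositions~\ref{prop:signed-local-completion} and~\ref{prop:squarefree-level-average}), the small-$r$ terms carry the densities $\mathfrak B\nu(r)/\zeta(2)$ and $\mathfrak A/(2\zeta(2))$.
\item These reassemble, for each weight, into Zubrilina's density $\M_k(p/t)$.
\item Her identity (Proposition~\ref{prop:zubrilina-identity}) converts this into the Bessel series $\alpha\sqrt y\sum_{d,s}Q(d)s^{-1}J_{k-1}(4\pi s\sqrt y/d)$.
\item Only then is the $k$-sum performed, by the smoothed Neumann identity $\sum_{n\ \mathrm{odd}}nw(n)J_n(x)\approx\frac x2w(x)$ (Proposition~\ref{prop:neumann}). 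This localizes $k-1\approx4\pi(s/d)\sqrt{p/N}$ and gives point masses at $v=(d/s)^2$.
\item Grouping by $\gcd(d,s)$ yields $\mu_{\rm at}$.
\end{enumerate}
Your statement that the local densities ``should produce $W_0$, $Q(d)$'' skips exactly this mechanism. The edge-localization heuristic cannot supply it.
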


For a prime $p$, let $\mathcal T_{X,K}(p;F)$ denote the summand of
\eqref{eq:intro-numerator} before the prime sum and its factor $\log p$:
\begin{align}
 \mathcal T_{X,K}(p;F)
 ={}&\sum_{N\geq1}^{\mathrm{sf}}W(N/X)
 \sum_{\substack{k\geq2\\ k\ {\rm even}}}V(k/K)
 F\!\left(\frac{16\pi^2p}{N(k-1)^2}\right) \notag\\
 &\hspace{24mm}\times
 \sqrt p\sum_{f\in H_k^*(N)}\lambda_f(p)\epsilon_f.
                                                        \label{eq:pointwise-trace}
\end{align}
The main term is expressed through Zubrilina's density
\begin{equation}\label{eq:Mk-intro}
 \M_k(y)=\alpha\sqrt y\sum_{d,s\geq1}
          \frac{Q(d)}sJ_{k-1}\!\left(\frac{4\pi s\sqrt y}{d}\right),
\end{equation}
where $J_\nu$ is the Bessel function and
\begin{equation}\label{eq:alpha-intro}
 \alpha=2\pi\prod_\ell
 \frac{\ell^4-2\ell^2-\ell+1}{\ell^4-2\ell^2+\ell}.
\end{equation}

\begin{theorem}[Pointwise trace in the cubic range]
\label{thm:pointwise-main}
Fix $0<\eps_0<3/2$ and the functions $F,V,W$ above.  There is
$\delta=\delta(\eps_0)>0$ such that, uniformly for primes $p\asymp XK^2$ and
\[
                 X^{\eps_0}\leq K\leq X^{3-\eps_0},
\]
one has
\begin{align}
 \mathcal T_{X,K}(p;F)
 ={}&\frac{\mathfrak c_\varphi}{12}
 \sum_{\substack{k\geq2\\k\ {\rm even}}}(k-1)V(k/K)
 \notag\\
 &\quad\times
 \int_0^\infty tW(t/X)
 F\!\left(\frac{16\pi^2p}{t(k-1)^2}\right)
 \M_k(p/t)\,dt \notag\\
 &+O_{F,V,W,\eps_0}(X^{2-\delta}K^2).
                                                        \label{eq:pointwise-main}
\end{align}
\end{theorem}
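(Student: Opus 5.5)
We follow Zubrilina's strategy: apply the trace formula to the root-number twisted trace, and carry the weight $k$ along as a second smooth variable. The first step writes $\epsilon_f$ through the Atkin--Lehner eigenvalue, $\epsilon_f=i^k\mu(N)\lambda_f(N)\sqrt N$ up to the standard normalization for squarefree $N$. Then $\sqrt p\sum_{f\in H_k^*(N)}\lambda_f(p)\epsilon_f$ becomes, up to sign, the trace of $T_p\circ W_N$ on $S_k^{\rm new}(N)$. We handle the newform projection by Möbius inversion over divisors of the squarefree level; oldform contributions at $M\mid N$, $M<N$, enter with the multiplicative weights that later produce $Q(d)$. The Eichler--Selberg/Yamauchi trace formula for $T_pW_N$ expresses this trace as three parts:
\[
 \tfrac12\sum_{t^2<4pN}\frac{\rho_k^{t}-\bar\rho_k^{t}}{\rho-\bar\rho}\,H_N(4pN-t^2)
\]
with $N\mid t$ forced, plus hyperbolic and identity terms. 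The hyperbolic and identity terms are $O(X^{1+\eps}K)$ after summation, which is negligible. Writing $t=Ns$, the elliptic sum runs over $|s|<2\sqrt{p/N}\asymp K$. Each summand contains the class number $H(N^2s^2-4pN)$ and a Gegenbauer polynomial in $k$.

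The second step treats the level sum. Fix $s$ and the $k$-dependence. We sum over squarefree $N$ with weight $W(N/X)F(16\pi^2p/(N(k-1)^2))$, using the class-number expansion
\[
 H(D)=\sqrt{|D|}\,L(1,\chi_D)\cdot(\text{local factors})/\pi .
\]
We truncate $L(1,\chi_D)$ to a short Dirichlet polynomial of length $X^{\eta}$ via Burgess, which is where the grouping by exact additive conductor enters. We then detect squarefreeness by $\sum_{d^2\mid N}\mu(d)$. Finally we apply Poisson summation in $N$ modulo the combined modulus $c=d^2 m\ell$, with local densities computed prime by prime. The zero frequency gives the main term. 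It is an Euler product whose local factors are $\ell^2/D_\ell$ at primes dividing the relevant $d$, yielding $\alpha$, $\mathfrak c_\varphi$ and $Q(d)$. The sum over $s$ of the Gegenbauer terms, weighted by $\sqrt{4pN-N^2s^2}$, is converted into $J_{k-1}(4\pi s\sqrt{p/t}/d)$ by the standard Bessel integral
\[
 \int \frac{\sin((k-1)\theta)}{\sin\theta}\,\dots ,
\]
together with summation over $k$ against $V(k/K)$, which smooths things in $\theta$. Replacing the $N$-sum by $\int t\,W(t/X)\cdots dt$ then gives exactly the displayed main term, with $\M_k(p/t)$.

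The main obstacle is the nonzero Poisson frequencies. After Poisson summation each dual frequency $h\neq0$ carries a complete exponential sum modulo $c$. These are Salié/Gauss-type sums in the quadratic $N^2s^2-4pN$, which we bound by roughly $\sqrt c$ times the gcd. They also carry a Fourier transform of the smooth weight, which decays once $|h|\gg c/X$. The number of $s$-terms is about $K$, and for each $s$ the class number has size about $\sqrt{pN}\asymp XK$. We estimate everything absolutely, without cancellation in $s$ or $k$. Each frequency then costs about $K\cdot XK\cdot X^{-1/2}\sqrt c$ per unit of the $k$-sum, and the dual length is set by $c$. Balancing the Burgess truncation length of the conductor against $X^{2-\delta}K^2$, the nonzero frequencies come out admissible exactly when $K\le X^{3-\eps_0}$, with the saving $\delta$ depending on $\eps_0$. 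The delicate part is to arrange the sum by exact conductor so that the Burgess bound on the tail of $L(1,\chi_D)$ and the Poisson error are optimized simultaneously. This is the step we expect to require the most care. The lower limit $K\ge X^{\eps_0}$ is needed so that the Bessel asymptotics, and the smoothing in $k$ that controls the Gegenbauer oscillation, save a power of $X$.
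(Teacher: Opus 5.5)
There is a genuine gap. Your outline has the right architecture: the Atkin--Lehner trace, class numbers through $L(1,\chi)$, Burgess, the squarefree sieve, Poisson in $N$, and the zero frequency as the main term. But the two steps that actually decide the range are wrong or missing.

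\emph{First, the truncation length.} Burgess does not let you cut $L(1,\chi_\Delta)$ at length $X^{\eta}$. For $|\Delta|\asymp X^2K^2$ it only gives truncation at $T=(XK)^{1/2+\eta}$, and this length is exactly the source of the cubic endpoint. In the paper, after writing $N=b^2(n_{\mathfrak c}+f^2u)$, the character $u\mapsto\left(\frac{Q(u)}{m}\right)$ has period $q_m\le 8m$. Its Fourier coefficients with exact odd denominator $d$ are $\ll d^{-1/2+\xi}$ by Weil. Counting dual frequencies by exact denominator then bounds all nonzero frequencies by $O_\xi(Sm^{1/2+\xi})$, independently of the progression length (Lemma~\ref{lem:exact-conductor-mass}). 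This is where the grouping by exact additive conductor enters; it has nothing to do with the Burgess step. Summing $m^{-1}m^{1/2+\xi}$ over $m\le T$ and $b\le Z$ gives the relative error $Z(XK)^{\xi}T^{1/2}/X$. For $K=X^\rho$ one has $T^{1/2}/X=X^{(\rho-3)/4+\eta(1+\rho)/2}$, and balancing against the sieve tail $Z^{-1}$ yields $K\le X^{3-\eps_0}$. With a truncation at $X^{\eta}$ there would be no cubic endpoint at all. With your own accounting ($\asymp K$ values of $s$, each of size $XK$) you lose a full factor $K$ and do not reach exponent $3$. Either way, the threshold is asserted rather than derived.

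\emph{Second, the order of summation.} You must sum over the weight before truncating or applying Poisson, whereas your error analysis explicitly forgoes cancellation in $s$ and $k$. Proposition~\ref{prop:chebyshev-kernel} applies Poisson summation in the odd variable $n=k-1$ against the phase $n\arcsin x$ and shows $\Kern_r(N,p)\ll K(1+r)^{-A}$. Hence only $r\le X^{\vartheta}$ survive, and the resulting weight $\Phi_{N,r}(p)$ is smooth in $N$ on scale $X$ (Corollary~\ref{cor:uniform-kernel-seminorms}). If instead you keep all $|s|<2\sqrt{p/N}\asymp K$ and estimate absolutely, three things go wrong:
\begin{itemize}
\item The trivial bound for the elliptic terms is $X^2K^3$ up to logarithms, a factor $K$ above the main term $X^2K^2$.
\item Every absolutely estimated error is therefore multiplied by $K$. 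This already breaks the Burgess remainder, whose relative size is only $Y^{-\delta_B(\eta)}$ with $\delta_B$ tiny.
\item For $s\asymp K$ the factor $U_{k-2}(s\sqrt N/(2\sqrt p))$ oscillates in $N$ on scale $X/s$, which is below $1$ once $K>X$. Poisson in $N$ then localizes nothing.
\end{itemize}

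Two smaller corrections. For squarefree $N$ and $p\nmid N$ there is no old-form contribution at all (Lemma~\ref{lem:new-full-fricke-trace}). Also, $Q(d)$ comes neither from old forms nor from the squarefree sieve. The zero frequency yields $\mathfrak B\nu(r)/\zeta(2)$, and $\mathfrak A/(2\zeta(2))$ at $r=0$. $Q(d)$ enters only through $\nu(r)=\sum_{d\mid r}Q(d)$ in Zubrilina's identity (Proposition~\ref{prop:zubrilina-identity}), whose Bessel variable $s$ is dual to your trace-formula index. Likewise, $K\ge X^{\eps_0}$ is not used for Bessel asymptotics; the pointwise theorem needs no Neumann summation. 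It is used to ensure $p>N$ and $r/K=o(1)$ for $r\le X^{\vartheta}$, which the kernel seminorm bounds require.
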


To compare the two statements, write $K=X^\rho$.  The point $\rho=3$ in
the fixed-prime range is the endpoint of the present absolute-value
estimate, not a claim that a pointwise asymptotic beyond it is impossible.

We use the trace formula before introducing harmonic weights.  For
$p\nmid N$, the spectral sum in \eqref{eq:intro-numerator} is the trace of
$T_pW_N$.  Its elliptic part contains
\[
 (-1)^{k/2-1}U_{k-2}\!\left(\frac{r\sqrt N}{2\sqrt p}\right)
 H_1(N^2r^2-4Np).
\]
If $p\asymp XK^2$, summation over $k$ makes this expression rapidly
decreasing in $r$.  The relevant discriminants are consequently of size
$X^2K^2$, uniformly away from zero.

The exact trace formula reduces both theorems to averages of Hurwitz class
numbers.  Burgess's estimate permits the Dirichlet series for the quadratic
$L$-values to be cut at
\[
                   T=(XK)^{1/2+\eta}.
\]
At a fixed prime, Poisson summation in the level gives a relative error
\[
 X^\sigma\left(Z^{-1}+\frac{ZT^{1/2}}X\right).
\]
This is a power saving below the cubic point.  In the prime-averaged
problem, the characters have modulus $8f^2m$ as functions of $p$.  The
smooth Barban--Davenport--Halberstam theorem gives instead
\[
 D\left(\frac{T}{XK^2}\right)^{1/2}(\log X)^C,
\]
which decreases as $K$ grows.  These estimates account for the two ranges
in Figure~\ref{fig:ranges}.

\begin{figure}[t]
\centering
\begin{tikzpicture}[x=1.45cm,y=.82cm,>=Latex,font=\small]
  \draw[->] (0,0) -- (5.1,0) node[right] {$\rho$};
  \draw[densely dashed,gray] (3,0) -- (3,2.35);
  \draw (3,.08) -- (3,-.08) node[below] {$3$};
  \node[left] at (0,1.75) {fixed prime};
  \node[left] at (0,.75) {prime average};
  \draw[line width=1.4pt] (.22,1.75) -- (2.86,1.75);
  \fill (.22,1.75) circle (1.6pt);
  \fill (2.86,1.75) circle (1.6pt);
  \node[above] at (1.54,1.75) {$\eps_0\leq\rho\leq3-\eps_0$};
  \draw[->,line width=1.4pt] (.22,.75) -- (4.75,.75);
  \fill (.22,.75) circle (1.6pt);
  \node[above] at (2.45,.75) {$\eps_0\leq\rho\leq A_0$};
  \node[below] at (3.9,.75) {$A_0<\infty$ fixed};
\end{tikzpicture}
\caption{Ranges of the weight parameter.  Here $A_0$ is arbitrary but
fixed before $X\to\infty$.}
\label{fig:ranges}
\end{figure}

For the identity used below, set
\begin{equation}\label{eq:beta-intro}
 \beta=2\pi\prod_\ell
 \frac{\ell^3+\ell^2-1}{\ell(\ell^2+\ell-1)},
\end{equation}
and
\begin{equation}\label{eq:nu-gamma-intro}
 \nu(r)=\prod_{\ell\mid r}
 \left(1+\frac{\ell^2}{D_\ell}\right),
 \qquad
 \gamma=12\prod_\ell\frac{\ell(\ell+1)}{\ell^2+\ell-1}.
\end{equation}

\begin{proposition}[Zubrilina's identity]\label{prop:zubrilina-identity}
For every even $k\geq2$ and $y>0$, the function in
\eqref{eq:Mk-intro} satisfies
\begin{align}
 \M_k(y)={}&
 \frac{\alpha(-1)^{k/2-1}}{k-1}
 \sum_{1\leq r<2\sqrt y}\nu(r)\sqrt{4y-r^2}\,
 U_{k-2}\!\left(\frac r{2\sqrt y}\right)\notag\\
 &+\frac{\beta}{k-1}\sqrt y-\gamma\delta_{k,2}y.
                                                        \label{eq:Mk-two-forms}
\end{align}
\end{proposition}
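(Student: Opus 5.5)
The plan is to prove \eqref{eq:Mk-two-forms} by expanding each Bessel function in \eqref{eq:Mk-intro} into its Fourier series in the $s$-variable, which is the classical route. For fixed $d$, the inner sum $\sum_{s\ge1}s^{-1}J_{k-1}(sx)$ with $x=4\pi\sqrt y/d$ is a Schlömilch-type series with a closed form. I would derive it from the integral representation
\[
 J_{\nu}(z)=\frac1{2\pi}\int_{-\pi}^{\pi}e^{i(\nu\theta-z\sin\theta)}\,d\theta ,
\]
or, better, from the Neumann-type evaluation
\[
 \sum_{s\geq1}\frac{J_\nu(sx)}{s}
 =\frac1\nu-\frac12\cdot\frac{\delta_{\nu,1}x}{2}\cdot(\ldots)+\sum_{1\leq m<x/2\pi}(\ldots).
\]
Concretely, the plan is to use the Poisson summation formula in $s$ applied to $g(s)=J_{k-1}(|s|x)/|s|$. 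The Fourier transform of $J_\nu(|t|)/|t|$ is explicit. For $|\xi|<1$ it equals $\tfrac{2}{\nu}\cos(\nu\arcsin\xi)$ up to normalization, and it vanishes for $|\xi|>1$. For odd $\nu=k-1$ this is $(-1)^{k/2-1}\tfrac{2}{k-1}\sqrt{1-\xi^2}\,U_{k-2}(\xi)$, since $\cos((k-1)\theta)$ with $\xi=\sin\theta$ rewrites through $\cos\theta\,U_{k-2}(\sin\theta)$ with that sign. The singular term at $s=0$ needs care. The value $\lim_{t\to0}J_{k-1}(t)/t$ is $\tfrac12\delta_{k,2}$, which produces the $\delta_{k,2}$ correction. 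The dual frequencies $m$ with $|m|<x/2\pi=2\sqrt y/d$ survive.

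After this step $\M_k(y)$ becomes $\alpha\sqrt y\sum_d Q(d)$ times three pieces. The first is a zero-frequency term, proportional to $1/(k-1)$. The second is a finite sum over $1\le m<2\sqrt y/d$ of $(-1)^{k/2-1}\frac{2}{k-1}\sqrt{1-(md/2\sqrt y)^2}\,U_{k-2}(md/2\sqrt y)$. The third is the $\delta_{k,2}$ term, proportional to $\sqrt y/d$ and hence divergent in $d$ unless paired correctly. In the second piece I substitute $r=md$. Then $\sqrt y\sqrt{1-r^2/4y}=\tfrac12\sqrt{4y-r^2}$, and the coefficient of each $r<2\sqrt y$ becomes $\sum_{d\mid r}Q(d)$. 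By multiplicativity this equals $\prod_{\ell\mid r}(1+\ell^2/D_\ell)=\nu(r)$. This reproduces the first line of \eqref{eq:Mk-two-forms} exactly, once the Poisson normalization constants are fixed consistently with $\alpha$.

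The main obstacle is the zero-frequency and $\delta_{k,2}$ terms. These involve $\sum_d Q(d)$ and $\sum_d Q(d)/d$. Since $Q(d)\asymp d^{-2}$, both sums converge absolutely. The identification therefore reduces to Euler product computations. First, $\alpha\sum_d Q(d)/d$ times the correct constant should equal $\beta$, because $\prod_\ell\frac{D_\ell}{\ell^4-2\ell^2+\ell}\bigl(1+\frac{\ell}{D_\ell}\bigr)=\prod_\ell\frac{\ell^4-2\ell^2+1}{\ell(\ell^3-2\ell+1)}$, and this should simplify to the factor in \eqref{eq:beta-intro}. Second, the $\delta_{k,2}$ term should give $\gamma$. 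I expect the main difficulty to be tracking which sum ($\sum Q(d)$ versus $\sum Q(d)/d$) attaches to which power of $\sqrt y$, and verifying the local factor identities. Note that $\ell^3-2\ell+1=(\ell-1)(\ell^2+\ell-1)$, which should produce $\beta$ and $\gamma$.

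I would also justify the interchange of the $d$- and $s$-sums with Poisson summation. Poisson summation is only conditionally valid here because $J_\nu(t)/t\sim t^{-3/2}$. I would handle this by inserting a smooth damping $e^{-\varepsilon s}$ and letting $\varepsilon\to0$. At frequencies with $md=2\sqrt y$ exactly, the transform has a boundary singularity, but the factor $\sqrt{4y-r^2}$ vanishes there, so the limit is harmless.
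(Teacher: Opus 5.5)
Your route is correct, but it differs from the paper's. The paper gives no derivation: it quotes \cite[Theorem~3]{ZubrilinaMurmurations} together with her Theorem~1 and only asserts that the normalizations of $Q,\alpha,\beta,\gamma,\nu$ agree. Your Poisson summation in $s$ is a self-contained proof, and it has the added value of checking those constants independently.

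The core steps are right. For odd $\nu=k-1$, the transform of $J_\nu(t)/t$ at angular frequency $\xi$ is $\frac2\nu\cos(\nu\arcsin\xi)$ for $|\xi|\leq1$ and vanishes beyond, because $\cos(\nu\pi/2)=0$. Hence
\[
 \sum_{s\geq1}\frac{J_{k-1}(sx)}{s}
 =\frac1{k-1}+\frac2{k-1}\sum_{1\leq m<x/(2\pi)}
 \cos\Bigl((k-1)\arcsin\frac{2\pi m}{x}\Bigr)-\frac x4\,\delta_{k,2}
 \qquad(x>0).
\]
Taking $x=4\pi\sqrt y/d$ and $r=md$, and using $\sum_{d\mid r}Q(d)=\nu(r)$, gives the first line of \eqref{eq:Mk-two-forms} with no leftover constant.

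The analysis is simpler than you suggest:
\begin{itemize}
\item $|J_\nu(t)/t|\ll(1+|t|)^{-3/2}$ is absolutely summable, and the transform is continuous, compactly supported, and zero at $|\xi|=1$. So Poisson summation holds pointwise, with no damping and no boundary issue.
\item The $(d,s)$ double series is absolutely convergent for fixed $y$, by Landau's bounds as in the proof of Proposition~\ref{prop:bessel-to-atoms}. Summing over $s$ first is therefore legitimate.
\item The $\delta_{k,2}$ piece equals $-\pi\alpha y\,\delta_{k,2}\sum_dQ(d)/d$. It is not divergent in $d$, since $Q(d)\ll d^{-2}$.
\end{itemize}

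One step is wrong as written: your last paragraph attaches $\sum_dQ(d)/d$ to $\beta$.
\begin{itemize}
\item The zero frequency contributes $\frac{\alpha\sqrt y}{k-1}\sum_dQ(d)$, with no factor $1/d$. Indeed $\alpha\sum_dQ(d)=\beta$, because $1+Q(\ell)=(\ell-1)(\ell^3+\ell^2-1)/D_\ell$ and $\ell^4-2\ell^2+\ell=\ell(\ell-1)(\ell^2+\ell-1)$.
\item The product you wrote is $\prod_\ell(\ell^2-1)^2/(\ell^4-2\ell^2+\ell)=W_0$. That is, $\alpha\sum_dQ(d)/d=2\pi W_0$, which does not reduce to the factor in \eqref{eq:beta-intro}.
\item The sum $\sum_dQ(d)/d$ belongs to the $\delta_{k,2}$ term. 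There $1+Q(\ell)/\ell=(\ell^2-1)^2/D_\ell$ gives $\pi\alpha\sum_dQ(d)/d=2\pi^2\prod_\ell(1-\ell^{-2})\frac{\ell(\ell+1)}{\ell^2+\ell-1}=\gamma$, using $\zeta(2)=\pi^2/6$.
\end{itemize}
With the two sums assigned this way, your argument proves the proposition.
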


\begin{proof}
This is \cite[Theorem~3]{ZubrilinaMurmurations}, combined with the
definition of the fixed-weight density in
\cite[Theorem~1]{ZubrilinaMurmurations}.  The normalizations of
$Q,\alpha,\beta,\gamma$, and $\nu$ are those in
\eqref{eq:Mk-intro}--\eqref{eq:nu-gamma-intro}.  The endpoint in the
$r$-sum contributes zero and may equivalently be included.
\end{proof}
The sum over the weight is now untwisted.  A smoothed form of Neumann's
identity
\[
 \sum_{n\geq1,\ n\ {\rm odd}}nJ_n(x)=\frac x2
\]
replaces the Bessel series by point masses.  Grouping $d$ and $s$ by their
greatest common divisor gives exactly \eqref{eq:atomic-measure-intro}.

Section~\ref{sec:notation} fixes the normalizations, and
Section~\ref{sec:exact-trace} gives the trace formula.  The uniform kernel
and the class-number truncation are treated in
Sections~\ref{sec:weight-kernel} and~\ref{sec:class-number}.  Three arithmetic
multiplicity statements are isolated explicitly: stabilization of the local
coefficients, the $O(m)$ support bound in the prime variable, and the exact
lifting identity for the fixed-prime zero frequency.  We prove the
prime-averaged estimate in Section~\ref{sec:prime-first} and the pointwise
estimate in Sections~\ref{sec:exact-conductor}
and~\ref{sec:pointwise-proof}.  The final section identifies the
prime-averaged main term with the atomic measure
\eqref{eq:atomic-measure-intro}.

\section{Notation and elementary estimates}\label{sec:notation}

We write $e(x)=\exp(2\pi i x)$, and $\Lambda$ denotes the von Mangoldt
function.  Throughout, $\eps>0$ is an arbitrarily small fixed number.
Implied constants may depend on the fixed functions $W,V,F$; dependence on
$\eps$ and on the auxiliary parameters $\eta,\vartheta,\xi$ is displayed
when it matters.  An integer $A$ used in a rapid-decay estimate is indicated
by a subscript.
We write $n=k-1$; hence $n$ is odd.  After enlarging fixed compact
intervals, the support conditions in \eqref{eq:intro-numerator} imply
\begin{equation}\label{eq:basic-scales}
 N\asymp X,\qquad n\asymp K,\qquad p\asymp XK^2.
\end{equation}
We use throughout the two natural composite scales
\begin{equation}\label{eq:P-Y-scales}
                         P=XK^2,\qquad Y=XK.
\end{equation}
The auxiliary parameters are kept in the following ledger; their final
numerical choices are made only when the two arguments separate.
\begin{center}
\small
\begin{tabular}{@{}lll@{}}
\toprule
parameter & size & role \\
\midrule
$P$ & $XK^2$ & prime scale \\
$Y$ & $XK$ & square root of the discriminant scale \\
$T$ & $Y^{1/2+\eta}$ & Burgess truncation \\
$D$ & $X^\vartheta$ & square-divisor truncation \\
$R$ & $X^\vartheta$ & elliptic-index truncation \\
$Z$ & chosen in Section~\ref{sec:pointwise-proof} & squarefree-sieve cutoff \\
\bottomrule
\end{tabular}
\end{center}
The prime-averaged argument requires $8D^2T=o(P)$, whereas the fixed-prime
argument balances the sieve tail $Z^{-1}$ against the nonzero-frequency term
$ZT^{1/2}/X$.  The displayed choices later leave a fixed power of $X$ in
both estimates.

Since $K\geq X^{\eps_0}$, the prime $p$ is larger than $N$ for all sufficiently
large $X$.  Thus $p\nmid N$ automatically in every sum to which the trace
formula is applied.

Put
\begin{equation}\label{eq:cphi}
 \mathfrak c_\varphi
 =\frac1{\zeta(2)}\prod_\ell\left(1-\frac1{\ell^2+\ell}\right).
\end{equation}

\begin{lemma}\label{lem:phi-average}
If $G\in C_c^\infty((0,\infty))$, then
\begin{align}
 \sum_{m\geq1}^{\mathrm{sf}}\varphi(m)G(m/X)
  &=\mathfrak c_\varphi X^2\int_0^\infty uG(u)\,du
    +O_G(X^{3/2+\eps}),                                      \label{eq:phi-average}\\
 \sum_{m\geq1}^{\mathrm{sf}}m\varphi(m)G(m/X)
  &=\mathfrak c_\varphi X^3\int_0^\infty u^2G(u)\,du
    +O_G(X^{5/2+\eps}).                                      \label{eq:nphi-average}
\end{align}
\end{lemma}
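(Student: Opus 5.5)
The plan is to first obtain the counting asymptotic for the unsmoothed sum $S(x)=\sum_{m\leq x}\mu^2(m)\varphi(m)$, and then pass to the smooth weight $G$ by partial summation. Write $\mu^2(m)\varphi(m)=\sum_{d\mid m}h(d)\,d$ as a Dirichlet convolution against the identity function $m\mapsto m$. Here $h$ is multiplicative and determined by the Euler factors: at a prime $\ell$ the local factor of $\sum \mu^2(m)\varphi(m)m^{-s}$ is $1+(\ell-1)\ell^{-s}$, while that of $\zeta(s-1)$ is $(1-\ell^{1-s})^{-1}$. Thus
\[
 \sum_m\frac{\mu^2(m)\varphi(m)}{m^s}=\zeta(s-1)\,H(s),\qquad
 H(s)=\prod_\ell\bigl(1-\ell^{1-s}\bigr)\bigl(1+(\ell-1)\ell^{-s}\bigr)
 =\prod_\ell\Bigl(1-\ell^{-s}-\ell^{2-2s}+\ell^{1-2s}\Bigr).
\]
The term $\ell^{2-2s}$ shows that $H(s)$ converges absolutely for $\Re s>3/2$. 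So $H$ is a Dirichlet series $\sum_d h(d)d^{-s}$ with $\sum_d |h(d)|d^{-\sigma}<\infty$ for $\sigma>3/2$, and $h$ is supported on cube-free integers with $|h(\ell^2)|\le \ell^2$.

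Next insert this convolution into the smooth sum:
\[
 \sum_m \mu^2(m)\varphi(m)G(m/X)=\sum_d h(d)\sum_{j\ge1} dj\,G(dj/X).
\]
For fixed $d$, the inner sum equals $X^2d^{-1}\int uG(u)\,du+O_G(X^{-A}\cdot\ldots)$ by Euler--Maclaurin or Poisson summation, since $G$ is smooth and compactly supported. More crudely, it is $X^2 d^{-1}\int uG+O(d)$, and it vanishes when $d\gg X$. The main term becomes $X^2\int uG\cdot\sum_{d}h(d)/d$, where the sum is taken over $d\ll X$. We have
\[
 \sum_{d\le X}|h(d)|d^{-1}\cdot X^2\cdot(d/X)^{1/2+\eps}\ll X^{3/2+\eps},
\]
because $\sum_d |h(d)| d^{-3/2-\eps}<\infty$. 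This bounds both the tail $\sum_{d>X}$ of the main-term series (using $1\le (d/X)^{1/2+\eps}$ there) and the $O(d)$ errors, since $\sum_{d\le X}|h(d)|\,d\ll X^{3/2+\eps}$. This yields \eqref{eq:phi-average} with constant $H(2)=\prod_\ell(1-\ell^{-2}-\ell^{-2}+\ell^{-3})$.

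The remaining task is to match $H(2)$ with $\mathfrak c_\varphi$. Factoring out $1-\ell^{-2}$ gives
\[
 \frac{1-2\ell^{-2}+\ell^{-3}}{1-\ell^{-2}}=\frac{\ell^3-2\ell+1}{\ell^3-\ell}=\frac{(\ell-1)(\ell^2+\ell-1)}{\ell(\ell-1)(\ell+1)}=1-\frac{1}{\ell^2+\ell}.
\]
Since $\prod_\ell(1-\ell^{-2})=1/\zeta(2)$, the product is exactly \eqref{eq:cphi}. For \eqref{eq:nphi-average}, apply the first result with $G$ replaced by $\tilde G(u)=uG(u)$: because $m\,G(m/X)=X\,\tilde G(m/X)$, we get main term $X\cdot\mathfrak c_\varphi X^2\int u^2G$ and error $X\cdot X^{3/2+\eps}$.

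The only delicate step is the error bookkeeping. The convergence abscissa $3/2$ of $H$ forces the $X^{3/2+\eps}$ loss, so the tail and the per-$d$ errors must be weighted by $(d/X)^{1/2+\eps}$ rather than estimated trivially. Everything else is routine.
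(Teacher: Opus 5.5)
Your route differs from the paper's. The paper uses the same factorization $\zeta(s-1)H(s)$, but then applies a smoothed Perron (Mellin) formula. It takes the residue of $\zeta(s-1)$ at $s=2$ and shifts the contour to $\Re s=3/2+\eps$, where the rapid decay of the Mellin transform of $G$ absorbs the polynomial growth of $\zeta$. The second formula then follows by partial summation. Your convolution-plus-Poisson argument is more elementary, since it needs no information about $\zeta$ inside the critical strip. It gives the same error term, because in both arguments the loss comes only from the abscissa $3/2$ of $H$. Your identification $H(2)=\mathfrak c_\varphi$ is correct, and so is your deduction of the second formula by applying the first to $uG(u)$.

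As written, however, the convolution step is wrong, and this breaks both the main term and the error bound. From $\zeta(s-1)H(s)$ one gets $\mu^2(m)\varphi(m)=\sum_{dj=m}h(d)\,j=\sum_{d\mid m}h(d)(m/d)$, not $\sum_{d\mid m}h(d)\,d$.
\begin{itemize}
\item With your inner sum $\sum_j dj\,G(dj/X)\approx X^2d^{-1}\int uG$, the main term carries $\sum_d h(d)/d=H(1)$. This series diverges, since $H$ converges absolutely only for $\Re s>3/2$, so it cannot produce $H(2)$.
\item The bound $\sum_{d\le X}|h(d)|\,d\ll X^{3/2+\eps}$ is false: the terms $d=\ell^2$ alone give $\gg X^{5/2}/\log X$.
\item Your displayed bound equals $X^{3/2-\eps}\sum_{d\le X}|h(d)|d^{-1/2+\eps}$. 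The exponent $-1/2+\eps$ is not covered by $\sum_d|h(d)|d^{-3/2-\eps}<\infty$.
\end{itemize}

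The repair is local. If $G$ is supported in $[c,C]$, then
\[
 \sum_{m\geq1}^{\mathrm{sf}}\varphi(m)G(m/X)=\sum_{d\leq CX}h(d)\sum_{j\geq1}j\,G(dj/X),
 \qquad
 \sum_{j\geq1}j\,G(dj/X)=\frac{X^2}{d^2}\int_0^\infty uG(u)\,du+O_G(1).
\]
The second identity follows from Poisson summation, uniformly for $d\leq CX$. The $O_G(1)$ errors total $\sum_{d\leq CX}|h(d)|\ll X^{3/2+\eps}$. Completing $\sum_{d\leq CX}h(d)d^{-2}$ to $H(2)$ costs
\[
 X^2\sum_{d>CX}\frac{|h(d)|}{d^2}
 \leq X^2\sum_{d>CX}\frac{|h(d)|}{d^2}\Bigl(\frac{d}{CX}\Bigr)^{1/2-\eps}
 \ll X^{3/2+\eps}\sum_{d\geq1}\frac{|h(d)|}{d^{3/2+\eps}}.
\]
Note the exponent $1/2-\eps$ in the weight, rather than your $1/2+\eps$. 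With these corrections your argument proves the lemma.
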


\begin{proof}
The Dirichlet series of $\mu^2(m)\varphi(m)$ is
\[
 \prod_\ell\left(1+\frac{\ell-1}{\ell^s}\right)
 =\zeta(s-1)\prod_\ell
  \left(1-\ell^{1-s}\right)
  \left(1+(\ell-1)\ell^{-s}\right).
\]
The second product is absolutely convergent for $\Re s>3/2$ and has value
$\prod_\ell(1-(\ell^2+\ell)^{-1})/\zeta(2)$ at $s=2$ after the residue of
$\zeta(s-1)$ is taken.  Perron's formula with a smooth weight, shifted to
$\Re s=3/2+\eps$, proves \eqref{eq:phi-average}.  Partial summation gives
\eqref{eq:nphi-average}.
\end{proof}

For later use, define
\begin{equation}\label{eq:VK}
 I_V(K)=K\int_0^\infty (Kt-1)^3V(t)\,dt.
\end{equation}

\begin{lemma}\label{lem:weight-cubic-sum}
For every $A>0$,
\begin{equation}\label{eq:weight-cubic-sum}
 \sum_{\substack{n\geq1\\n\ {\rm odd}}}n^3V((n+1)/K)
 =\frac12 I_V(K)+O_{A,V}(K^{-A}).
\end{equation}
In particular, when $V$ is nonnegative and nonzero,
$I_V(K)\asymp K^4$ for all sufficiently large $K$.
\end{lemma}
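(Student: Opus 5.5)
The plan is to substitute $n+1=k$ and read the sum as a sum over even integers $k$ of $(k-1)^3V(k/K)$. We then compare it with the corresponding integral by Poisson summation, or equivalently by Euler--Maclaurin. Concretely, the odd $n\geq1$ correspond to even $k=2j$ with $j\geq1$. Since $V$ is supported in a fixed compact subset of $(0,\infty)$, the constraint $j\geq1$ is harmless for large $K$, and we may sum over all $j\in\mathbb Z$. So the left side of \eqref{eq:weight-cubic-sum} equals $\sum_{j\in\mathbb Z}g(j)$ with
\[
 g(x)=(2x-1)^3V(2x/K),
\]
a smooth compactly supported function.

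Poisson summation gives
\[
 \sum_{j}g(j)=\sum_{h\in\mathbb Z}\hat g(h),\qquad \hat g(h)=\int_{\R}g(x)e(-hx)\,dx.
\]
The zero frequency, after the change of variables $x=Kt/2$, is
\[
 \hat g(0)=\frac K2\int_0^\infty (Kt-1)^3V(t)\,dt=\frac12 I_V(K),
\]
which is exactly the claimed main term.

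For $h\neq0$, we integrate by parts $A'$ times. Each derivative of $g$ costs at most a factor of order $K^{-1}$ relative to the size $K^3$ of $g$, apart from derivatives falling on the polynomial factor, which only lower the power of $K$. The support of $g$ has length $\asymp K$. Hence
\[
 \hat g(h)\ll_{A',V}K^{4}(K|h|)^{-A'}.
\]
Summing over $h\neq0$ gives $O(K^{4-A'})$, and choosing $A'=A+4$ yields $O_{A,V}(K^{-A})$. The only care needed is to check that the polynomial factor $(2x-1)^3$ does not spoil the bound; it does not, since $g^{(m)}\ll K^{3-m}$ on the support.

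For the final assertion, expand $(Kt-1)^3=K^3t^3+O(K^2)$ uniformly on $\operatorname{supp}V$. This gives
\[
 I_V(K)=K^4\int_0^\infty t^3V(t)\,dt+O(K^3).
\]
The integral $\int t^3V$ is strictly positive because $V\geq0$ is nonzero, continuous, and supported in $(0,\infty)$, so $I_V(K)\asymp K^4$ for large $K$.

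No step is a real obstacle. The one point to handle explicitly is the uniform derivative bound on $g$, which is what makes the nonzero Poisson frequencies negligible.
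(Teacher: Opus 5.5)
Your proposal is correct and follows the paper's argument. Your substitution $n=2j-1$ is just Poisson summation on the progression $n\equiv1\pmod 2$; the zero frequency gives $\tfrac12 I_V(K)$, and the nonzero frequencies are negligible by repeated integration by parts using $g^{(m)}\ll K^{3-m}$ on a support of length $\asymp K$. You also supply the short verification of $I_V(K)\asymp K^4$, which the paper leaves implicit.
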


\begin{proof}
Apply Poisson summation on the progression $n\equiv1\pmod2$ to the smooth
compactly supported function $t^3V((t+1)/K)$.  All nonzero frequencies are
$O_{A,V}(K^{-A})$ after increasing $A$ to absorb the polynomial factor.
\end{proof}

\begin{proposition}[The normalizer]\label{prop:normalizer}
Fix $\eps_0,A_0>0$.  Uniformly for
$X^{\eps_0}\leq K\leq X^{A_0}$, and for every $\eps>0$, one has
\begin{equation}\label{eq:normalizer-asymptotic}
 \mathcal C_{X,K}
 =\frac{\mathfrak c_\varphi}{384\pi^2}
 X^3 I_V(K)\int_0^\infty u^2W(u)\,du
 +O_{W,V,\eps}\left(X^{5/2+\eps}K^4+X^{2+\eps}K^3\right).
\end{equation}
\end{proposition}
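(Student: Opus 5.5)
We will insert an explicit dimension formula for $\dim S_k^{\rm new}(N)$ into \eqref{eq:intro-normalizer} and then evaluate the resulting double sum with Lemma~\ref{lem:phi-average} and Lemma~\ref{lem:weight-cubic-sum}. For squarefree $N$ and even $k\geq2$, the newform dimension is obtained from the dimension formula for $S_k(\Gamma_0(N))$ by Möbius inversion in the level, via $\dim S_k(N)=\sum_{M\mid N}d(N/M)\dim S_k^{\rm new}(M)$. The main term of $\dim S_k(\Gamma_0(N))$ is $\frac{k-1}{12}\psi(N)$ with $\psi(N)=N\prod_{\ell\mid N}(1+\ell^{-1})$. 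Inverting with the multiplicative function $\mu*\mu$ turns $\psi$ into $\varphi$ on squarefree levels, since $(\ell+1)-2=\ell-1$. The elliptic terms, the cusp term $\frac12\sum_{d\mid N}\varphi((d,N/d))=\frac12 2^{\omega(N)}$, and the $\delta_{k,2}$ term are all $O(N^\eps)$ per level. Their inversions remain $O_\eps(N^{\eps})$, because they are bounded by divisor-type functions of $N$. This gives
\[
 \dim S_k^{\rm new}(N)=\frac{k-1}{12}\varphi(N)+O_\eps(N^\eps),
\]
uniformly in even $k\geq2$ and squarefree $N$.

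Next we write $n=k-1$ and substitute this formula. The main part of $\mathcal C_{X,K}$ is then
\[
 \frac{1}{12\cdot16\pi^2}\Big(\sum_{N}^{\mathrm{sf}}N\varphi(N)W(N/X)\Big)\Big(\sum_{n\ {\rm odd}}n^3V((n+1)/K)\Big).
\]
By \eqref{eq:nphi-average}, the first factor equals $\mathfrak c_\varphi X^3\int u^2W+O(X^{5/2+\eps})$. By Lemma~\ref{lem:weight-cubic-sum}, the second factor equals $\frac12 I_V(K)+O(K^{-A})$. Since $192\cdot2=384$, the constant is $\mathfrak c_\varphi/(384\pi^2)$, as claimed. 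The cross error is $X^{5/2+\eps}\cdot K^4$, using $I_V(K)\asymp K^4$, together with a negligible $X^3K^{-A}$ term. Since $K\geq X^{\eps_0}$, we can choose $A$ so that $X^3K^{-A}$ is absorbed into $X^{5/2}K^4$.

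For the remainder, the $O(N^\eps)$ part of the dimension contributes
\[
 \sum_N W(N/X)\,N\sum_n V((n+1)/K)\,n^2\,N^\eps\ll X^{2+\eps}K^3,
\]
which is the second error term. The $\delta_{k,2}$ correction is present only when $k=2$. It vanishes for large $K$ because $V(2/K)=0$ once $K$ exceeds a constant depending on the support of $V$. Uniformity in $X^{\eps_0}\leq K\leq X^{A_0}$ is automatic, since the two sums factor completely.

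The only delicate step is the first one: checking that the Möbius-inverted lower-order terms really are $O(N^\eps)$ uniformly in $k$. These terms depend on $k$ only through bounded periodic quantities, namely $k$ modulo $4$ and modulo $3$ for the elliptic terms. Each is a product over $\ell\mid N$ of bounded local factors, so after inversion it is bounded by $C^{\omega(N)}\ll N^\eps$. We will either quote Martin's newform dimension formula or carry out this local computation directly, prime by prime.
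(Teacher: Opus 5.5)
Your proposal is correct and follows the paper's proof: you insert $\dim S_k^{\rm new}(N)=\frac{k-1}{12}\varphi(N)+O_\eps(N^\eps)$ into $\mathcal C_{X,K}$ and apply \eqref{eq:nphi-average} and Lemma~\ref{lem:weight-cubic-sum}, and the constant $\mathfrak c_\varphi/(384\pi^2)$ and both error terms come out exactly as in the paper. The only difference is that the paper simply cites Martin's newform dimension formula, whereas you also sketch a valid derivation of it by inverting $\dim S_k(N)=\sum_{M\mid N}d(N/M)\dim S_k^{\rm new}(M)$ with $\mu*\mu$, using that the lower-order terms depend on $k$ only through bounded periodic coefficients.
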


\begin{proof}
For squarefree $N$, the dimension formula
\cite{MartinDimension} gives
\begin{equation}\label{eq:dimension-formula}
 \dim S_k^{\rm new}(N)=\frac{k-1}{12}\varphi(N)+O_\eps(N^\eps).
\end{equation}
Insert \eqref{eq:dimension-formula} in \eqref{eq:intro-normalizer}, and use
Lemmas~\ref{lem:phi-average} and \ref{lem:weight-cubic-sum}.  The error in
\eqref{eq:dimension-formula} contributes $O_\eps(X^{2+\eps}K^3)$, and the
error in \eqref{eq:nphi-average} contributes $O_\eps(X^{5/2+\eps}K^4)$.
Relative to the main term, these errors are
$O(X^{-1+\eps}K^{-1})$ and $O(X^{-1/2+\eps})$.  Taking $\eps>0$
sufficiently small in terms of $\eps_0$ proves the proposition.  No upper
bound on $K$ is used.
\end{proof}

\section{The Atkin--Lehner trace}\label{sec:exact-trace}

We fix the normalizations used below.  If
\(\alpha=\left(\begin{smallmatrix}a&b\\c&d\end{smallmatrix}\right)\in
\mathrm{GL}_2^+(\mathbb Q)\), put
\[
 (f[\alpha]_k)(z)=\det(\alpha)^{k-1}(cz+d)^{-k}f(\alpha z).
\]
The Hecke operator \(T_n\) is the double-coset operator associated with
matrices of determinant \(n\) which are upper triangular modulo \(N\).
Thus, if
\[
 f(z)=\sum_{n\geq 1}a_f(n)e(nz),\qquad a_f(1)=1,
\]
is a Hecke eigenform, then \(T_n f=a_f(n)f\).  The normalized Fricke
involution is
\[
 (W_Nf)(z)=N^{-k/2}z^{-k}f\left(-\frac1{Nz}\right).
\]
These are the conventions of \cite[Section~2]{AssafTrace}.

Write
\[
 \lambda_f(n)=a_f(n)n^{-(k-1)/2},\qquad
 L(s,f)=\sum_{n\geq1}\frac{\lambda_f(n)}{n^s}.
\]
With
\[
 \Lambda(s,f)=N^{s/2}(2\pi)^{-s}
 \Gamma\left(s+\frac{k-1}{2}\right)L(s,f),
\]
we write \(\Lambda(s,f)=\epsilon_f\Lambda(1-s,f)\).  If
\(W_Nf=w_N(f)f\), Mellin inversion gives
\begin{equation}\label{eq:root-fricke}
 \epsilon_f=i^k w_N(f)=(-1)^{k/2}w_N(f).
\end{equation}
For squarefree \(N\), this is equivalent to
\[
 w_N(f)=(-1)^{\omega(N)}\sqrt N\,\lambda_f(N).
\]

\begin{remark}[Specialization dictionary]\label{rem:trace-dictionary}
The formulas below use the normalization just fixed.  In the specialization
of Popa's trace formula we take
\[
                 Q=N,\qquad n=p,\qquad w=k-2,\qquad t=Nr.
\]
The polynomial $p_w(t,n)$ is defined by
$\sum_{w\geq0}p_w(t,n)z^w=(1-tz+nz^2)^{-1}$, and hence
\[
 \frac{p_{k-2}(Nr,Np)}{N^{(k-2)/2}}
 =p^{(k-2)/2}U_{k-2}\!\left(\frac{r\sqrt N}{2\sqrt p}\right).
\]
Together with $T_pf=a_f(p)f$ and
$\epsilon_f=(-1)^{k/2}w_N(f)$, this accounts for every power of $p$ and
for the sign used in Theorem~\ref{thm:exact-fricke-trace}.
\end{remark}

\begin{lemma}\label{lem:natural-sum-is-trace}
Let \(k\geq2\) be even and let \(p\nmid N\) be prime.  If
\(H_k^*(N)\) is a normalized Hecke eigenbasis of
\(S_k^{\mathrm{new}}(N)\), then
\begin{equation}\label{eq:natural-trace-normalization}
 \sqrt p\sum_{f\in H_k^*(N)}\lambda_f(p)\epsilon_f
 =(-1)^{k/2}p^{1-k/2}
 \operatorname{Tr}\!\left(T_pW_N\mid S_k^{\mathrm{new}}(N)\right).
\end{equation}
\end{lemma}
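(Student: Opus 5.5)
The plan is to compute the trace of $T_pW_N$ on the eigenbasis $H_k^*(N)$ directly, and then convert between the unitary and classical normalizations. For squarefree $N$ and $p\nmid N$, strong multiplicity one makes each $f\in H_k^*(N)$ a simultaneous eigenvector of $T_p$ and of $W_N$. The operator $W_N$ commutes with $T_p$ when $p\nmid N$, and preserves the one-dimensional eigenspace spanned by $f$. Hence $H_k^*(N)$ diagonalizes $T_pW_N$ on $S_k^{\mathrm{new}}(N)$, and
\[
 \operatorname{Tr}\!\left(T_pW_N\mid S_k^{\mathrm{new}}(N)\right)
 =\sum_{f\in H_k^*(N)}a_f(p)\,w_N(f).
\]
It is worth checking, with the conventions of \cite[Section~2]{AssafTrace}, that the composite is $f\mapsto T_p(W_Nf)$. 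Since the two operators commute, the order does not affect the trace anyway.

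The second step substitutes the normalizations. We have $a_f(p)=\lambda_f(p)p^{(k-1)/2}$, and by \eqref{eq:root-fricke} we have $w_N(f)=(-1)^{k/2}\epsilon_f$, using $(-1)^{k/2}=\pm1$ so that it is its own inverse. This gives
\[
 \operatorname{Tr}(T_pW_N)=(-1)^{k/2}p^{(k-1)/2}\sum_f\lambda_f(p)\epsilon_f .
\]
Multiplying by $(-1)^{k/2}p^{1-k/2}$ produces $p^{1/2}\sum_f\lambda_f(p)\epsilon_f$, which is \eqref{eq:natural-trace-normalization}.

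The only point needing care is the normalization of $W_N$. The factor $N^{-k/2}$ in the definition of $W_N$ must make it an involution on $S_k(N)$, so that its eigenvalues are $\pm1$ and agree with the $w_N(f)$ appearing in \eqref{eq:root-fricke}. I would confirm this by computing $W_N^2f=(-1)^kf=f$ from the definition. I would also confirm the sign $\epsilon_f=i^kw_N(f)$ through the Mellin transform $\Lambda(s,f)=N^{s/2}\int_0^\infty f(iy/\sqrt N)y^{s+(k-1)/2}\,dy/y$ under $y\mapsto1/y$. Since \eqref{eq:root-fricke} is already stated in the paper, I will take it as given, and the lemma is then the two-line computation above.
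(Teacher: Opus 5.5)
Your proof is correct and is the paper's argument: each $f\in H_k^*(N)$ spans a line on which $T_pW_N$ acts by $a_f(p)w_N(f)$, and substituting $a_f(p)=p^{(k-1)/2}\lambda_f(p)$ and \eqref{eq:root-fricke} gives the identity. One harmless slip occurs in your optional side check: the Mellin formula should read $\Lambda(s,f)=(2\pi)^{(k-1)/2}N^{-(k-1)/4}\int_0^\infty f(iy/\sqrt N)\,y^{s+(k-1)/2}\,dy/y$ (the factor $N^{s/2}$ goes with $f(iy)$, not with $f(iy/\sqrt N)$), but since you take \eqref{eq:root-fricke} as given, this does not affect the proof.
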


\begin{proof}
The eigenvalue of \(T_pW_N\) on the line spanned by \(f\) is
\(a_f(p)w_N(f)\).  Now use
\(a_f(p)=p^{(k-1)/2}\lambda_f(p)\) and \eqref{eq:root-fricke}.
\end{proof}

\begin{lemma}\label{lem:new-full-fricke-trace}
Suppose that \(N\) is squarefree and \(p\nmid N\).  Then
\begin{equation}\label{eq:new-full-fricke-trace}
 \operatorname{Tr}\!\left(T_pW_N\mid S_k^{\mathrm{new}}(N)\right)
 =\operatorname{Tr}\!\left(T_pW_N\mid S_k(N)\right).
\end{equation}
\end{lemma}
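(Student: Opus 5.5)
We will show that the oldspace $S_k^{\mathrm{old}}(N)$ contributes zero to the trace. Since $T_pW_N$ preserves the decomposition $S_k(N)=S_k^{\mathrm{new}}(N)\oplus S_k^{\mathrm{old}}(N)$, this gives \eqref{eq:new-full-fricke-trace}. By Atkin--Lehner theory for squarefree $N$, the oldspace decomposes into the pieces
\[
 V_f=\operatorname{span}\{f(dz): d\mid N/M\},
\]
one for each newform $f$ of level $M\mid N$ with $M<N$. Each $V_f$ has dimension $\tau(N/M)$ and is stable under $T_p$ (because $p\nmid N$) and under $W_N$. So it suffices to prove $\operatorname{Tr}(T_pW_N\mid V_f)=0$ for each such $f$.

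On $V_f$ the operator $T_p$ acts as the scalar $a_f(p)$. Indeed, $T_p$ commutes with $f\mapsto f|B_d$ when $p\nmid d$, since $B_d$ is conjugation by $\mathrm{diag}(d,1)$ and $p\nmid N$. It remains to show $\operatorname{Tr}(W_N\mid V_f)=0$.

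For this we compute $W_N$ on the basis $f|B_d$. With the normalization of Section~\ref{sec:exact-trace}, a direct matrix computation gives
\[
 W_N(f|B_d)=c_d\, (W_M f)|B_{d'},\qquad d'=\frac{N/M}{d},
\]
for an explicit nonzero constant $c_d$ coming from the determinant and $N^{-k/2}$ factors. Since $W_Mf=\pm f$, the operator $W_N$ permutes the basis vectors $f|B_d$, up to scalars, via the involution $d\mapsto (N/M)/d$ on the divisors of $N/M$.

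The trace of such a monomial matrix equals the sum of the diagonal entries at fixed points of the permutation. A fixed point would need $d^2=N/M$. Because $N/M>1$ is squarefree, it is not a square, so there are no fixed points and the trace vanishes. Hence $\operatorname{Tr}(T_pW_N\mid V_f)=a_f(p)\cdot 0=0$.

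The main obstacle is justifying the decomposition and the action formula rigorously with the paper's normalizations. In particular, one must check that $W_N$ really sends $f|B_d$ to a multiple of $f|B_{N/(Md)}$, factoring $W_N=W_M\cdot(\text{Atkin--Lehner at }N/M)$ with the $B_d$ conjugations handled correctly. I would cite Atkin--Lehner, or equivalently \cite[Section~2]{AssafTrace}, for this. Alternatively, one can note that Popa's/Assaf's formula for $\operatorname{Tr}(T_pW_N)$ on $S_k(N)$ already isolates the newspace when $N$ is squarefree, for this same reason.
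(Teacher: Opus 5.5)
Your argument is correct, but it takes a different route from the paper. Your closing ``alternatively'' is essentially what the paper does. The paper's proof is a citation: for $p\nmid N$, Corollary~5.14 of \cite{AssafTrace} gives
\[
 \operatorname{Tr}\!\left(T_pW_N\mid S_k^{\mathrm{new}}(N)\right)
 =\sum_{\substack{N'\mid N\\ N/N'=\square}}\mu\!\left(\sqrt{N/N'}\right)
 \operatorname{Tr}\!\left(T_pW_{N'}\mid S_k(N')\right),
\]
and squarefreeness of $N$ leaves only $N'=N$.

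You instead show directly that each old block $V_f$ contributes zero. The step you flag as the main obstacle is routine. With $dd'M=N$,
\[
 \begin{pmatrix}d&0\\0&1\end{pmatrix}\begin{pmatrix}0&-1\\N&0\end{pmatrix}
 =d\begin{pmatrix}0&-1\\M&0\end{pmatrix}\begin{pmatrix}d'&0\\0&1\end{pmatrix}
\]
holds in $\mathrm{GL}_2^+(\mathbb Q)$ with no coprimality hypothesis, and a scalar $\lambda I$ acts by $\lambda^{k-2}$ in the $[\,\cdot\,]_k$ action. Hence $W_N$ is monomial on $\{f|B_d\}$ along the involution $d\mapsto (N/M)/d$. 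Meanwhile $T_p$ acts as the scalar $a_f(p)$ on $V_f$, because $p\nmid d$.

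Your route is self-contained, given Atkin--Lehner theory, and it explains where the square condition in Assaf's formula comes from. If you keep the fixed points $d^2=N/M$, the diagonal entry there is exactly $w_M(f)$, whatever normalization you choose for $B_d$. You then get
\[
 \operatorname{Tr}\!\left(T_pW_N\mid S_k(N)\right)=\sum_{e^2\mid N}\operatorname{Tr}\!\left(T_pW_{N/e^2}\mid S_k^{\mathrm{new}}(N/e^2)\right),
\]
and M\"obius inversion in $e$ is precisely the displayed corollary in the case $p\nmid N$. The citation buys brevity and covers general $N$ without this bookkeeping, but it hides the mechanism that your computation makes visible.
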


\begin{proof}
Corollary~5.14 of \cite{AssafTrace}, in the case \(p\nmid N\), gives
\[
 \operatorname{Tr}\!\left(T_pW_N\mid S_k^{\mathrm{new}}(N)\right)
 =\sum_{\substack{N'\mid N\\ N/N'=\square}}
 \mu\!\left(\sqrt{N/N'}\right)
 \operatorname{Tr}\!\left(T_pW_{N'}\mid S_k(N')\right).
\]
Since \(N\) is squarefree, the condition that \(N/N'\) be a square
forces \(N'=N\).
\end{proof}

For a negative discriminant \(\Delta\), let
\begin{equation}\label{eq:H1-definition}
 H_1(\Delta)=
 \sum_{[Q]\in\mathrm{SL}_2(\mathbb Z)\backslash\mathcal Q_\Delta^+}
 \frac{2}{|\operatorname{Aut}_{\mathrm{SL}_2(\mathbb Z)}(Q)|}.
\end{equation}
Here \(\mathcal Q_\Delta^+\) is the set of positive definite integral
binary quadratic forms of discriminant \(\Delta\).  Thus a generic class
has weight one, while the classes of \(x^2+y^2\) and
\(x^2+xy+y^2\) have weights \(1/2\) and \(1/3\), respectively.  The
sum is zero when \(\Delta\not\equiv0,1\pmod4\).  We also set
\(H_1(0)=-1/12\), although the value at zero does not occur in
Theorem~\ref{thm:exact-fricke-trace}.

Let \(U_j\) denote the Chebyshev polynomial of the second kind,
normalized by
\begin{equation}\label{eq:U-normalization}
 U_j(\cos\theta)=\frac{\sin((j+1)\theta)}{\sin\theta}.
\end{equation}

\begin{theorem}[Exact Atkin--Lehner trace]\label{thm:exact-fricke-trace}
Let \(N\geq2\) be squarefree, let \(p\nmid N\) be prime, and let
\(k\geq2\) be even.  Then
\begin{align}
 &\sqrt p\sum_{f\in H_k^*(N)}\lambda_f(p)\epsilon_f \notag\\
 &\quad=\frac12 H_1(-4Np)\label{eq:exact-fricke-trace}\\
 &+(-1)^{k/2-1}
 \sum_{1\leq r<2\sqrt{p/N}}
 U_{k-2}\!\left(\frac{r\sqrt N}{2\sqrt p}\right)
 H_1(N^2r^2-4Np)\nonumber\\
 &-\delta_{k,2}(p+1).\nonumber
\end{align}
Equivalently, if a prime on the summation sign denotes half weight at
\(r=0\), then the two
class-number terms in \eqref{eq:exact-fricke-trace} equal
\begin{equation}\label{eq:trace-half-weight}
 (-1)^{k/2-1}
 \sum_{0\leq r<2\sqrt{p/N}}^{\prime}
 U_{k-2}\!\left(\frac{r\sqrt N}{2\sqrt p}\right)
 H_1(N^2r^2-4Np).
\end{equation}
\end{theorem}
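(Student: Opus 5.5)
We derive the formula by combining Lemmas~\ref{lem:natural-sum-is-trace} and~\ref{lem:new-full-fricke-trace} with the trace formula for $T_pW_N$ on the full space $S_k(N)$, which comes from Popa's formula in the normalization of \cite{AssafTrace}. By Lemma~\ref{lem:new-full-fricke-trace}, $N$ squarefree and $p\nmid N$ allow us to replace the newspace by $S_k(N)$. By Lemma~\ref{lem:natural-sum-is-trace}, the left side of \eqref{eq:exact-fricke-trace} equals $(-1)^{k/2}p^{1-k/2}\operatorname{Tr}(T_pW_N\mid S_k(N))$. It therefore suffices to specialize the general formula for $\operatorname{Tr}(T_nW_Q\mid S_k(N))$ with $Q=N$ (the full Atkin--Lehner level) and $n=p$, and then multiply by this prefactor.

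In Popa's formula, with $Q=N$ and $(n,N)=1$, the elliptic terms run over $t$ with $t^2<4Qn$ and $Q\mid t$. They are weighted by $-\tfrac12\,p_{k-2}(t,Qn)\,Q^{-(k-2)/2}$ times a class-number sum. Writing $t=Nr$ gives discriminant $N^2r^2-4Np$ and the range $r^2<4p/N$. The local factors at primes dividing $N$ collapse to the plain $H_1$ of \eqref{eq:H1-definition} because $N$ is squarefree and $Q=N$. By Remark~\ref{rem:trace-dictionary},
\[
 \frac{p_{k-2}(Nr,Np)}{N^{(k-2)/2}}=p^{(k-2)/2}U_{k-2}\!\left(\frac{r\sqrt N}{2\sqrt p}\right),
\]
and the factor $p^{(k-2)/2}$ cancels $p^{1-k/2}$ exactly. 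Pairing $r$ with $-r$ uses $U_{k-2}(-x)=U_{k-2}(x)$ for even $k$ and turns the sum over $t$ into the one-sided sum with half weight at $r=0$, as in \eqref{eq:trace-half-weight}. At $r=0$ we have $U_{k-2}(0)=(-1)^{k/2-1}$, and the signs combine to give $+\tfrac12H_1(-4Np)$. For $r\ge1$ the sign is $(-1)^{k/2}\cdot(-1)=(-1)^{k/2-1}$.

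The hyperbolic terms come from factorizations $Qn=ad$ with $Q\mid(a+d)$. Here $Qn=Np$ with $N\ge2$ squarefree and $p\nmid N$. A factorization $Np=ad$ with $N\mid a+d$ forces, for each $\ell\mid N$, $\ell\mid a$ and $\ell\mid d$, hence $\ell^2\mid Np$, which is impossible. So the hyperbolic contribution vanishes, and this is where $N\ge2$ is used. The parabolic term is absent because $W_N$ has no fixed cusp configuration contributing when $Q=N>1$. The weight-two correction term in Popa's formula is $\delta_{k,2}\,\sigma$-type: it equals $\delta_{k,2}\sigma_1(p)\cdot(\text{number of }Q\text{-divisor solutions})$, which here reduces to $\delta_{k,2}(p+1)$. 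After multiplication by $(-1)^{k/2}p^{1-k/2}=-1$ at $k=2$, this gives $-\delta_{k,2}(p+1)$.

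The main obstacle is bookkeeping rather than ideas. We must check that the overall sign of Popa's elliptic term, the factor $(-1)^{k/2}$ from \eqref{eq:root-fricke}, and the weighting of $H_1$ (weights $1/2$ and $1/3$ versus the paper's $2/|\mathrm{Aut}|$ convention) match. We will verify this on one small case, for instance $k=2$ with $N$ and $p$ such that $S_2(N)=0$, where the right side must vanish. The equivalence with \eqref{eq:trace-half-weight} is then the parity of $U_{k-2}$ together with $U_{k-2}(0)=(-1)^{k/2-1}$.
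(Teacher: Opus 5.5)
Your outline is the paper's proof: specialize Popa's Theorem~4 with $Q=N$ and $n=p$, write $t=Nr$, convert $p_{k-2}$ into $U_{k-2}$, pair $\pm r$ using evenness, eliminate the hyperbolic sum by the same $\ell\mid a+d$ argument, and multiply by $(-1)^{k/2}p^{1-k/2}$. Your sign bookkeeping for $r=0$, for $r\geq1$, and for the $\delta_{k,2}$ term is correct.

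The step you assert without proof is that the class-number factor in Popa's elliptic term, $\sum_{u\mid N}\mu(u)H_1\bigl((t^2-4Np)/u^2\bigr)$, reduces to its $u=1$ term. The reason you give, squarefreeness of $N$, does not cover $u=2$. For an odd prime $\ell\mid u$ it works: $v_\ell(N^2r^2-4Np)=1$ because $\ell\Vert N$ and $\ell\nmid 4p$. If $N=2N_0$, however, then $N^2r^2-4Np=4N_0(N_0r^2-2p)$ is divisible by $4$, so divisibility does not kill the $u=2$ term. It vanishes only because $N_0(N_0r^2-2p)$ is $\equiv3\pmod 4$ for odd $r$ and $\equiv2\pmod 4$ for even $r$, and hence is not a discriminant. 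The paper supplies this congruence check. Without it, the even-level case of the formula is not established.

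The absence of a contribution at $t^2=4Np$ is what makes the $r$-sum in the statement strict. It has nothing to do with cusp configurations of $W_N$; it holds because $Np$ is squarefree and greater than $1$, so $4Np$ is not a perfect square. The weight-two term should also be read off as $\delta_{k,2}\sigma_{1,N}(p)$, which equals $\delta_{k,2}(p+1)$ because $p\nmid N$, rather than via an unspecified count of ``$Q$-divisor solutions''. With these verifications inserted, your argument is complete and coincides with the paper's.
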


\begin{proof}
We apply Theorem~4 of \cite{PopaTraceII} with \(Q=N\) and \(n=p\).
In Popa's notation, \(w=k-2\), and the elliptic part of the full-space
trace is
\begin{equation}\label{eq:popa-specialized-elliptic}
 -\frac12
 \sum_{\substack{t^2\leq4Np\\N\mid t}}
 \frac{p_w(t,Np)}{N^{w/2}}
 \sum_{u\mid N}\mu(u)
 H_1\!\left(\frac{t^2-4Np}{u^2}\right),
\end{equation}
where an inadmissible or nonintegral discriminant contributes zero.  This
is also the specialization used in \cite[Section~2]{ZubrilinaMurmurations}.

Put \(t=Nr\).  If \(u>1\), the corresponding term in the inner sum in
\eqref{eq:popa-specialized-elliptic} vanishes.  Indeed, if an odd prime
\(\ell\mid u\), then
\[
 v_\ell(4Np-N^2r^2)=1,
\]
because \(N\) is squarefree and \(p\nmid N\); hence \(u^2\) does not
divide \(4Np-N^2r^2\).  The only remaining possibility is \(u=2\) with
\(N=2N_0\), where \(N_0\) is odd.  In that case
\[
 \frac{4Np-N^2r^2}{4}=N_0(2p-N_0r^2).
\]
This integer is congruent to \(2\pmod4\) when \(r\) is even and to
\(1\pmod4\) when \(r\) is odd.  Its negative is therefore not a
quadratic discriminant, and the class number again vanishes.  Thus only
\(u=1\) remains.

The polynomials in Popa's formula are characterized by
\[
 \sum_{j\geq0}p_j(t,n)x^j=(1-tx+nx^2)^{-1}.
\]
Comparison with the generating series for \(U_j\) gives
\begin{equation}\label{eq:gegenbauer-chebyshev}
 \frac{p_w(Nr,Np)}{N^{w/2}}
 =p^{w/2}U_w\!\left(\frac{r\sqrt N}{2\sqrt p}\right).
\end{equation}

The hyperbolic sum in Popa's formula is empty.  For if \(ad=Np\) and
\(N\mid a+d\), then, for any prime \(\ell\mid N\), squarefreeness of
\(Np\) implies that \(\ell\) divides exactly one of \(a,d\), contrary
to \(\ell\mid a+d\).  The remaining term in that formula is
\[
 \delta_{k,2}\sigma_{1,N}(p)=\delta_{k,2}(p+1).
\]
It follows from \eqref{eq:popa-specialized-elliptic} and
\eqref{eq:gegenbauer-chebyshev} that
\begin{align*}
 &\operatorname{Tr}(T_pW_N\mid S_k(N))\\
 &\quad=-p^{w/2}\bigg\{
 \frac12U_w(0)H_1(-4Np)\\
 &\hspace{28mm}+
 \sum_{1\leq r<2\sqrt{p/N}}
 U_w\!\left(\frac{r\sqrt N}{2\sqrt p}\right)
 H_1(N^2r^2-4Np)\bigg\}\\
 &\hspace{12mm}+\delta_{k,2}(p+1).
\end{align*}
By Lemmas~\ref{lem:natural-sum-is-trace}
and~\ref{lem:new-full-fricke-trace}, we may multiply this identity by
\((-1)^{k/2}p^{-w/2}\).  Since
\[
 U_{k-2}(0)=(-1)^{k/2-1},
\]
the result is \eqref{eq:exact-fricke-trace}.

It remains to justify the strict inequality in the sum.  Equality would
give \(r^2N=4p\).  Every odd prime divisor of \(N\) would then be \(p\),
which is excluded.  Hence \(N\in\{1,2\}\); neither \(r^2=4p\) nor
\(r^2=2p\) is possible for a prime \(p\) with \(p\nmid N\).  Thus the
endpoint never occurs.  Finally, \eqref{eq:trace-half-weight} follows
from the displayed value of \(U_{k-2}(0)\).
\end{proof}

\begin{remark}[The level-one term]\label{rem:level-one-trace}
At level \(N=1\), the hyperbolic sum in Popa's formula is not empty.  The
two factorizations \(p=1\cdot p=p\cdot1\) contribute \(-1\) to
\(\operatorname{Tr}(T_p\mid S_k(1))\).  Consequently the right-hand side
of \eqref{eq:exact-fricke-trace}, with \(N=1\), must be supplemented by
\begin{equation}\label{eq:level-one-correction}
 (-1)^{k/2+1}p^{1-k/2}.
\end{equation}
For \(k=2\), the class-number terms together with
\eqref{eq:level-one-correction} and \(-p-1\) sum to zero, as they must
since \(S_2(1)=0\).  No correction is needed for \(N=2\).
\end{remark}

\begin{remark}[Primes dividing the level]\label{rem:bad-prime-trace}
The condition \(p\nmid N\) is needed in
Lemma~\ref{lem:new-full-fricke-trace}.  If \(N\) is squarefree and
\(p\mid N\), Corollary~5.14 of \cite{AssafTrace} gives instead
\begin{align}\label{eq:bad-prime-new-full}
 \operatorname{Tr}(T_pW_N\mid S_k^{\mathrm{new}}(N))
 ={}&\operatorname{Tr}(T_pW_N\mid S_k(N))\\
 &+(1-p)p^{k/2-1}
 \operatorname{Tr}(W_{N/p}\mid S_k^{\mathrm{new}}(N/p)).\nonumber
\end{align}
Moreover, when \(N=p\), the elliptic sum in Popa's formula has the scalar
endpoint \(r=2\).  Formula \eqref{eq:exact-fricke-trace} is therefore
used only for \(p\nmid N\).
\end{remark}

\section{The weight kernel}\label{sec:weight-kernel}

For $N,p>0$ and $r\geq1$, let
\begin{equation}\label{eq:chebyshev-kernel}
 \Kern_r(N,p)=
 \sum_{\substack{k\geq2\\k\ {\rm even}}}
 V(k/K)F\!\left(\frac{16\pi^2p}{N(k-1)^2}\right)
 (-1)^{k/2-1}
 U_{k-2}\!\left(\frac{r\sqrt N}{2\sqrt p}\right),
\end{equation}
where a summand is taken to be zero if $r\sqrt N>2\sqrt p$.  We also put
\begin{equation}\label{eq:parabolic-kernel}
 \Kern_0(N,p)=
 \sum_{\substack{k\geq2\\k\ {\rm even}}}
 V(k/K)F\!\left(\frac{16\pi^2p}{N(k-1)^2}\right).
\end{equation}

\begin{proposition}[Uniform weight kernel]\label{prop:chebyshev-kernel}
Assume \eqref{eq:basic-scales}.  For every $A>0$,
\begin{equation}\label{eq:chebyshev-kernel-bound}
 |\Kern_r(N,p)|\ll_AK(1+r)^{-A}.
\end{equation}
If $r\sqrt N\leq\sqrt p$, the same estimate holds after applying
$(N\partial_N)^i(p\partial_p)^j$, for any fixed nonnegative integers
$i,j$.
The same estimate without $(1+r)^{-A}$ holds for $\Kern_0$.
\end{proposition}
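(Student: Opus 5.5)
Substitute $n=k-1$, which is odd, and write $x=r\sqrt N/(2\sqrt p)=\sin\phi$ with $\phi\in[0,\pi/2]$. The plan is to turn $\Kern_r(N,p)$ into a smooth sum of cosines over odd $n$, where Poisson summation gives rapid decay.

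\emph{Trigonometric form.} By \eqref{eq:U-normalization} with $\cos\theta=x$ and $\theta=\pi/2-\phi$,
\[
 (-1)^{(n-1)/2}U_{n-1}(x)=(-1)^{(n-1)/2}\frac{\sin(n\pi/2-n\phi)}{\cos\phi}=\frac{\cos(n\phi)}{\cos\phi},
\]
because $\cos(n\pi/2)=0$ and $\sin(n\pi/2)=(-1)^{(n-1)/2}$ for odd $n$. Put
\[
 G(t)=V((t+1)/K)\,F\!\left(\frac{16\pi^2p}{Nt^2}\right).
\]
Then
\[
 \Kern_r(N,p)=\frac1{\cos\phi}\sum_{n\ {\rm odd}}G(n)\cos(n\phi).
\]
Under \eqref{eq:basic-scales}, $G$ is supported on $t\asymp K$ and satisfies $G^{(j)}\ll_j K^{-j}$. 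Hence $\widehat G(\xi)\ll_A K(1+K|\xi|)^{-A}$.

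\emph{Poisson summation.} As in Lemma~\ref{lem:weight-cubic-sum}, apply Poisson summation on the progression $n\equiv1\pmod 2$. This writes the sum as $\tfrac12\sum_{m\in\mathbb Z}(-1)^m$ times values of $\widehat G$ at the frequencies $\pm\phi/(2\pi)+m/2$. Since $0\le\phi/(2\pi)\le1/4$, every term with $m\neq0$ has frequency of size at least $1/4$. These terms therefore contribute $O_A(K^{-A})$ in total. The $m=0$ term is $\ll K(1+K\phi)^{-A}$. Since $\phi\asymp x\asymp r\sqrt{N/p}\asymp r/K$, this is $\ll_A K(1+r)^{-A}$.

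\emph{The factor $1/\cos\phi$ (main obstacle).} This factor is harmless when $x\le1/2$, which covers the derivative statement. When $x$ is close to $1$ we have $r\asymp K$, so the Poisson bound is already $K^{-A}$ for every $A$. It then suffices that $1/\cos\phi$ is at most polynomial in $XK$. In every application $N$, $p$ and $r$ are integers, and $4p-Nr^2\geq1$ by Theorem~\ref{thm:exact-fricke-trace}. Therefore
\[
 \cos^2\phi=\frac{4p-Nr^2}{4p}\geq\frac1{4p}\gg(XK^2)^{-1}\geq K^{-C}
\]
for some fixed $C$, and increasing $A$ absorbs this loss. For genuinely real $N,p$ near the edge $x=1$, I would instead apply Poisson summation directly to the smooth function $t\mapsto G(t)\sin(t\theta)/\sin\theta$ near $\theta=0$. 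This function is $\ll K^2$ with $t$-derivatives $\ll K^{2-j}$ on the support and gains no frequency separation, so this is the step I expect to need the most care. I would first check whether the paper only uses integral arguments, in which case the bound above suffices.

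\emph{Derivatives and $\Kern_0$.} The operators $N\partial_N$ and $p\partial_p$ applied to $G$ give functions of the same shape, with $F$ replaced by $v^jF^{(j)}(v)$. Applied to $\phi$ they give $N\partial_N\phi=\tan\phi/2$, which is bounded when $x\le1/2$. Each derivative falling on $\cos(n\phi)$ or $1/\cos\phi$ therefore costs at most a factor $n\phi\ll1+r$, and increasing $A$ absorbs it. The $\Kern_0$ bound is trivial: it is a sum of $O(K)$ bounded terms.
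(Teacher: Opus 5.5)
Your trigonometric form, the Poisson bound on the odd progression, the derivative bounds for $x\le 1/2$, and the treatment of $\Kern_0$ all match the paper. The gap is the edge $x\to1$, which you flag but do not close.

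\textbf{The integrality argument does not cover the statement.} It gives the bound only for integers $N,p,r$ with $4p-Nr^2\ge1$. It also needs $X\le K^{O(1)}$, i.e.\ the standing hypothesis $K\ge X^{\eps_0}$, which is not part of \eqref{eq:basic-scales}; so your constant also depends on $\eps_0$. Your claim that every application has integral arguments is not right. The kernel is evaluated at real levels $t$ and real $y$, with $r$ running up to $2\sqrt{y/t}\asymp K$. This happens in the $r>R$ tail $\int_0^\infty|H_r(y)|\,dy$ in the proof of Proposition~\ref{prop:prime-main-identification}, and in the $R^{-A}$ term of $E_0$ in Proposition~\ref{prop:fixed-prime-zero}. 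There the kernel is multiplied by $\sqrt{t(4y-r^2t)}$, which would cancel $1/\cos\phi$, but you do not use this, and the proposition is a pointwise statement for real $N,p>0$.

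\textbf{The fallback fails as written.} With $\phi=\pi/2-\theta$, the summand is $(-1)^{(n-1)/2}G(n)\sin(n\theta)/\sin\theta$, and you dropped exactly the sign $(-1)^{(n-1)/2}$. Without it, the sum near $\theta=0$ really is of size $K^2$. With it, on $n=2j+1$ the sign is $e^{i\pi j}$, so the phase per step is $\pi+2\theta$. For $|\theta|\le\pi/4$ this stays at distance at least $\pi/2$ from $2\pi\mathbb Z$, so there \emph{is} frequency separation.

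\textbf{The missing step is a mean value argument, and your own Poisson bound supplies it.} Put $\mathcal S(\phi)=\sum_{n\text{ odd}}G(n)\cos(n\phi)$.
\begin{itemize}
\item Since $\cos(n\pi/2)=0$ for odd $n$, one has $\mathcal S(\pi/2)=0$.
\item The derivative $\mathcal S'(\phi)=-\sum_{n\text{ odd}}nG(n)\sin(n\phi)$ is $O_A(K^{-A})$ for $\pi/4\le\phi\le\pi/2$. This follows from your Poisson computation applied to $tG(t)$, because every frequency $\pm\phi/(2\pi)+m/2$ then has modulus at least $1/8$.
\item Hence $|\mathcal S(\phi)|\le(\pi/2-\phi)\sup|\mathcal S'|\ll_A K^{-A}\cos\phi$.
\end{itemize}
So $\Kern_r(N,p)=\mathcal S(\phi)/\cos\phi\ll_A K^{-A}$ uniformly up to $x=1$, for real $N,p$ and with no relation between $X$ and $K$. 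Since $r\asymp K$ in this range, this is the claimed bound.

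This is the paper's argument in different notation. It writes the sum as $(S(\delta)-S(-\delta))/(2i\sin\delta)$ with $S(u)=\sum_j(-1)^ja(2j+1)e^{i(2j+1)u}$. It then obtains $S^{(b)}(u)\ll K^{-A}$ for $|u|\le\pi/4$ from the alternating sign, and applies the mean value theorem down to $\delta=0$.
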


\begin{proof}
Put $n=k-1=2j+1$, $x=r\sqrt N/(2\sqrt p)$, and
$\theta=\arcsin x$.  From \eqref{eq:U-normalization},
\begin{equation}\label{eq:chebyshev-cosine}
 (-1)^{k/2-1}U_{k-2}(x)
 =\frac{\cos(n\theta)}{\sqrt{1-x^2}}.
\end{equation}
The coefficient
\[
 a(n)=V((n+1)/K)F(16\pi^2p/(Nn^2))
\]
is supported on an interval $n\asymp K$ and satisfies
$a^{(j)}(n)\ll_jK^{-j}$.  Poisson summation on $n=2j+1$ therefore gives,
uniformly for $0\leq\theta\leq\pi/4$,
\begin{equation}\label{eq:odd-poisson}
 \sum_{n\ {\rm odd}}a(n)e^{in\theta}
 \ll_A K(1+K\theta)^{-A}.
\end{equation}
The denominator in \eqref{eq:chebyshev-cosine} is bounded in this range.
Moreover, the support of $a$ gives $\sqrt{p/N}\asymp K$, and hence
$K\theta\asymp Kx\asymp r$ when $\theta\leq\pi/4$.  This proves
\eqref{eq:chebyshev-kernel-bound} without derivatives in the first range.

It remains to check that no loss occurs at $x=1$.  Write
$\delta=\pi/2-\theta$.  For $n=2j+1$,
\begin{equation}\label{eq:endpoint-chebyshev}
 \frac{\cos(n\theta)}{\sqrt{1-x^2}}
 =(-1)^j\frac{\sin(n\delta)}{\sin\delta}.
\end{equation}
Let
\[
 S(u)=\sum_j(-1)^ja(2j+1)e^{i(2j+1)u}.
\]
If $|u|\leq\pi/4$, the frequency in the $j$-sum stays at distance at
least $\pi/2$ from $2\pi\mathbb Z$.  Poisson summation, with arbitrary
many integrations by parts, gives
\begin{equation}\label{eq:alternating-poisson}
 S^{(b)}(u)\ll_{A,b}K^{-A}\qquad(|u|\leq\pi/4).
\end{equation}
The sum of \eqref{eq:endpoint-chebyshev} is
$(S(\delta)-S(-\delta))/(2i\sin\delta)$.  The mean value theorem and
\eqref{eq:alternating-poisson} show that it is $O_A(K^{-A})$, uniformly
down to $\delta=0$.  In this range $r\asymp K$, so this is stronger than
\eqref{eq:chebyshev-kernel-bound}.

Logarithmic differentiation in $N$ or $p$ differentiates $a$ and $x$.
When $x\leq1/2$, each derivative of the phase contributes
$O(n\theta)=O(r)$; this is absorbed by increasing $A$ in
\eqref{eq:odd-poisson}.  This proves the stated derivative bounds in the
range where they will be used.  The assertion for $\Kern_0$ follows
directly from Poisson summation, or from the derivative bounds for $a$.
\end{proof}

Recall that $P=XK^2$.  For $r\geq0$, define
\begin{equation}\label{eq:normalized-kernel-weight}
 \Psi_{X,K,N,r}(u)=\frac1P\Kern_r(N,Pu)
 \sqrt{N(4Pu-r^2N)},
\end{equation}
with the elliptic support understood.

\begin{corollary}\label{cor:uniform-kernel-seminorms}
Fix $\eps_0>0$ and choose $0<\vartheta<\eps_0$.  If
$K\geq X^{\eps_0}$, $N\asymp X$, and $0\leq r\leq X^\vartheta$, then
$\Psi_{X,K,N,r}$ is supported in a fixed compact subinterval of
$(0,\infty)$.  For every $a,j,A\geq0$,
\begin{equation}\label{eq:uniform-kernel-seminorms}
 \sup_{u>0}\left|(N\partial_N)^a\partial_u^j
 \Psi_{X,K,N,r}(u)\right|
 \ll_{a,j,A,F,V}(1+r)^{-A}.
\end{equation}
\end{corollary}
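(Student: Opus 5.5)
The plan is to read off everything from Proposition~\ref{prop:chebyshev-kernel} after rescaling $p=Pu$. First I establish the support statement. The factor $F(16\pi^2Pu/(N(k-1)^2))$ in $\Kern_r(N,Pu)$ is nonzero only when $Pu/(Nn^2)$ lies in a fixed compact subset of $(0,\infty)$. Since $n\asymp K$ (from $V$) and $N\asymp X$, this forces $u=p/(XK^2)\asymp1$, that is, $u$ lies in a fixed compact interval independent of $X,K,N,r$. The square root factor $\sqrt{N(4Pu-r^2N)}$ is real on the elliptic support $r^2N\le4Pu$. Moreover, because $r\le X^\vartheta$ with $\vartheta<\eps_0$ and $K\ge X^{\eps_0}$, one has $r^2N\ll X^{1+2\vartheta}=o(P)$. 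Hence for large $X$ we are always in the range $r\sqrt N\le\sqrt p$ (indeed $x=r\sqrt N/(2\sqrt{Pu})\ll X^{\vartheta-\eps_0}\to0$), and the elliptic cutoff is never active on the support.

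Next comes the size bound with $a=j=0$. On the support, $\sqrt{N(4Pu-r^2N)}=2\sqrt{NPu}\,(1-x^2)^{1/2}\asymp\sqrt{XP}=XK$. Combined with $|\Kern_r|\ll_AK(1+r)^{-A}$, this gives $|\Psi|\ll P^{-1}\cdot K\cdot XK(1+r)^{-A}=(1+r)^{-A}$, which is \eqref{eq:uniform-kernel-seminorms} without derivatives.

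For derivatives I note that $\partial_u$ acting on a function of $Pu$ equals $u^{-1}(p\partial_p)$, and $u\asymp1$ on the support. So $\partial_u^j$ is a bounded combination of $u^{-i}(p\partial_p)^{i'}$ with $i,i'\le j$, with coefficients uniformly bounded on the compact support. Similarly $N\partial_N$ is applied directly. By the Leibniz rule it suffices to bound $(N\partial_N)^a(p\partial_p)^b$ of each factor separately. For $\Kern_r$ this is exactly the derivative statement of Proposition~\ref{prop:chebyshev-kernel}, valid because $r\sqrt N\le\sqrt p$. For the factor $2\sqrt{Np}(1-x^2)^{1/2}$, the operators $N\partial_N$ and $p\partial_p$ reproduce $\sqrt{Np}$ up to constants. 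Applied to $x^2=r^2N/(4p)$ they give multiples of $x^2$, and $(1-x^2)^{1/2}$ is smooth with bounded derivatives for $x\le1/2$. So these derivatives are $\ll XK$ with no loss in $r$. Multiplying out and dividing by $P$ gives \eqref{eq:uniform-kernel-seminorms}. For $r=0$ the same argument uses the $\Kern_0$ clause.

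The only delicate point is ensuring that the derivative bounds of Proposition~\ref{prop:chebyshev-kernel} apply uniformly, including near the endpoint $x\to1$ where the square root is not smooth. I expect this to be the main obstacle, and it is exactly where the hypothesis $\vartheta<\eps_0$ is used: it keeps $x=o(1)$, so the endpoint analysis is never needed. I would also check that the polynomial factors of $r$ produced by differentiating the phase $n\theta$ are absorbed into $(1+r)^{-A}$ by enlarging $A$, as the proposition already asserts.
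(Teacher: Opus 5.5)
Your proposal is correct and follows the paper's own route. It rescales $p=Pu$ and gets $u\asymp1$ from the supports of $V$, $W$, $F$. It uses $\vartheta<\eps_0$ to keep $x\ll r/K\leq X^{\vartheta-\eps_0}=o(1)$, so the logarithmic-derivative bounds of Proposition~\ref{prop:chebyshev-kernel} apply, and it combines the resulting $K(1+r)^{-A}$ with an $O(XK)$ bound for the square-root factor and its derivatives, which divided by $P=XK^2$ gives $(1+r)^{-A}$. You supply more detail than the paper, namely the conversion $\partial_u=u^{-1}(p\partial_p)$ and the Leibniz bookkeeping, but the argument is the same.
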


\begin{proof}
On the support of \eqref{eq:normalized-kernel-weight}, one has $u\asymp1$
and
\[
 \frac{r\sqrt N}{2\sqrt{Pu}}\ll\frac rK
 \leq X^{\vartheta-\eps_0}=o(1).
\]
Proposition~\ref{prop:chebyshev-kernel} therefore applies with the required
logarithmic derivatives.  The square-root factor and its derivatives are
$O(XK)$, whereas $P=XK^2$ and
$\Kern_r(N,Pu)\ll_AK(1+r)^{-A}$.  This proves
\eqref{eq:uniform-kernel-seminorms}.
\end{proof}

\section{The class-number expansion}\label{sec:class-number}

For \(p>0\), \(r\geq0\), and \(u>0\), write
\begin{equation}\label{eq:Delta-pr}
 \Delta_{p,r}(u)=-u(4p-r^2u).
\end{equation}
The class-number formula, in the normalization
\eqref{eq:H1-definition}, is
\begin{equation}\label{eq:H1-L-expansion}
 H_1(\Delta)=\frac{\sqrt{|\Delta|}}{\pi}
 \sum_{\substack{f^2\mid\Delta\\
                  \Delta/f^2\equiv0,1\ (4)}}
 \frac1f L(1,\chi_{\Delta/f^2})
 \qquad(\Delta<0).
\end{equation}
The exceptional discriminants \(-3\) and \(-4\) are included by the
weights in \eqref{eq:H1-definition}.  This is the usual decomposition
over quadratic orders followed by Dirichlet's class-number formula; see,
for example, \cite[Chapter~22]{IwaniecKowalski}.

\subsection{Burgess truncation and the outer tails}

We use Burgess's estimate only after fixing the excess over the
quarter-conductor length.

\begin{lemma}[Burgess truncation]\label{lem:burgess-uniform}
Fix \(\eta>0\).  There is \(\delta_B(\eta)>0\) such that, uniformly for
negative discriminants \(d\) with \(|d|\ll Y^2\),
\begin{equation}\label{eq:burgess-uniform}
 L(1,\chi_d)=
 \sum_{m\leq Y^{1/2+\eta}}\frac{\chi_d(m)}m
 +O_\eta(Y^{-\delta_B(\eta)}).
\end{equation}
\end{lemma}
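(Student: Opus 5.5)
The plan is the standard one: partial summation on the character sum, with Burgess's bound controlling the tail. For a negative discriminant $d$ with $|d|\ll Y^2$, the character $\chi_d$ is a real primitive or imprimitive character of modulus $q\le|d|\ll Y^2$. Put $T=Y^{1/2+\eta}$. Partial summation gives
\[
 \sum_{m>T}\frac{\chi_d(m)}m=-\frac{S(T)}T+\int_T^\infty\frac{S(t)}{t^2}\,dt,
 \qquad S(t)=\sum_{T<m\le t}\chi_d(m),
\]
so it suffices to show $S(t)\ll t\,Y^{-\delta}$ uniformly for $t\ge T$. For $t\ge q$, the trivial periodicity bound $|S(t)|\le q$ already gives the required saving, since $q\le t$ and the integral is $\int_q^\infty q/t^2\,dt\ll 1\cdot(q/q)$. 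More precisely, the contribution of $t\ge Y^{2}$ to the integral is $O(Y^2\cdot Y^{-2})$ only when measured against $1/t$. I will therefore split the range at $t=q^{1-\kappa}$ rather than at $q$, and use Pólya--Vinogradov, $S(t)\ll q^{1/2}\log q\ll Y\log Y$, for $t\ge Y^{1+\kappa}$. That piece then contributes $O(Y^{-\kappa}\log Y)$.

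For the middle range $T\le t\le Y^{1+\kappa}$ I will invoke Burgess's bound with $r=2$ or $r=3$. For cube-free moduli, and in particular for the quadratic characters here after removing the bounded power of $2$, this reads
\[
 S(t)\ll_{\eps,r} t^{1-1/r}q^{(r+1)/(4r^2)+\eps}.
\]
With $q\ll Y^2$ this is $\ll t^{1-1/r}Y^{(r+1)/(2r^2)+2\eps}$. At $t=Y^{1/2+\eta}$, choosing $r$ large in terms of $\eta$ gives $S(t)/t\ll Y^{-\eta/r+1/(2r^2)+2\eps}$, and this is a power saving once $r>1/(2\eta)$ and $\eps$ is small. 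Because $t^{-1/r}$ decreases, the saving only improves for larger $t$. The constant $\delta_B(\eta)$ is then the minimum of this exponent and $\kappa$. The dependence on $\eta$ enters only through the choice of $r$, $\eps$ and $\kappa$.

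I expect the main obstacle to be the modulus. $\chi_d$ for a nonfundamental $d=d_0f^2$ is induced from the primitive character modulo $|d_0|$ twisted by the principal character modulo $f$. Burgess's estimate applies to the primitive character of conductor $|d_0|\le|d|$ (fundamental discriminants are cube-free away from $2$, with $8\,\|\,$ at most). The twist by $\mathbf 1_{(m,f)=1}$ is removed by Möbius inversion over $e\mid f$, which writes $S(t)=\sum_{e\mid f}\mu(e)\chi_{d_0}(e)S_0(t/e)$. Applying Burgess to each $S_0$ costs only a factor $\tau(f)\ll Y^{\eps}$, since $\sum_e (t/e)^{1-1/r}\ll t^{1-1/r}\tau(f)$. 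This loss is absorbed into the $\eps$ above, and so the estimate is uniform in $d$ as required.
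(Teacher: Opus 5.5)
Your argument is correct and follows the paper's route: Möbius inversion over the square part of $d$ reduces to the primitive character modulo $|d_0|$ at a cost of $\tau(f)\ll Y^{\eps}$, then Burgess with $r>1/(2\eta)$ and partial summation at $T=Y^{1/2+\eta}$. Your Pólya--Vinogradov cutoff at $Y^{1+\kappa}$ is harmless but unnecessary (the abandoned remark about the trivial bound $|S(t)|\le q$ gives no saving on its own), because Burgess's factor $t^{1-1/r}$ already gives $\int_T^\infty |S(t)|t^{-2}\,dt\ll_{r,\eps}\tau(f)\,T^{-1/r}q^{(r+1)/(4r^2)+\eps}$ over the whole range $t\geq T$, exactly as in the paper.
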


\begin{proof}
Write \(d=d_0c^2\), with \(d_0\) fundamental and \(q=|d_0|\).  Since
\[
 \chi_d(n)=\chi_{d_0}(n)\mathbf1_{(n,c)=1},
\]
inclusion--exclusion gives
\[
 \sum_{n\leq z}\chi_d(n)
 =\sum_{a\mid c}\mu(a)\chi_{d_0}(a)
   \sum_{m\leq z/a}\chi_{d_0}(m).
\]
For every fixed integer \(s\geq2\), Burgess's bound
\cite{BurgessCharacterSums} yields
\[
 \sum_{n\leq z}\chi_d(n)
 \ll_{s,\xi}
 c^\xi z^{1-1/s}q^{(s+1)/(4s^2)+\xi}.
\]
The odd part of \(q\) is squarefree and its \(2\)-part is bounded, so the
quoted form applies.  Partial summation at \(T=Y^{1/2+\eta}\), together
with \(c\ll Y\) and \(q\ll Y^2\), gives
\[
 L(1,\chi_d)-\sum_{m\leq T}\frac{\chi_d(m)}m
 \ll_{s,\xi}
 Y^{1/(2s^2)-\eta/s+3\xi}.
\]
Choose \(s>(2\eta)^{-1}\), and then choose \(\xi>0\) sufficiently
small.
\end{proof}

Set
\[
 \omega_0=\frac12,\qquad \omega_r=1\quad(r\geq1),
\]
and define the elliptic expression
\begin{equation}\label{eq:H-post-class}
 \mathcal H_{X,K}(p)
 =\sum_{r\geq0}\omega_r
  \sum_{N\geq1}^{\mathrm{sf}}W(N/X)
  \Kern_r(N,p)H_1(\Delta_{p,r}(N)).
\end{equation}
The elliptic support is implicit.  For large \(X\),
Theorem~\ref{thm:exact-fricke-trace} shows that
\begin{equation}\label{eq:T-equals-H}
                 \mathcal T_{X,K}(p;F)=\mathcal H_{X,K}(p).
\end{equation}
Indeed \(p>N\), the support of \(W\) excludes \(N=1\), and the support of
\(V(k/K)\) excludes \(k=2\).

Recall that \(Y=XK\), fix \(\eta,\vartheta>0\), and set
\begin{equation}\label{eq:truncation-parameters}
 T=Y^{1/2+\eta},\qquad D=R=X^\vartheta.
\end{equation}
For \(r\leq R\), define
\begin{equation}\label{eq:Phi-pr}
 \Phi_{N,r}(p)=\Kern_r(N,p)\sqrt{N(4p-r^2N)}.
\end{equation}
The truncated expression is
\begin{align}
 \mathcal H^{\mathrm{tr}}_{X,K}(p)
 ={}&\frac1\pi\sum_{0\leq r\leq R}\omega_r
 \sum_{N\geq1}^{\mathrm{sf}}W(N/X)\Phi_{N,r}(p)
 \notag\\
 &\quad\times
 \sum_{\substack{f\leq D\\f^2\mid\Delta_{p,r}(N)\\
        \Delta_{p,r}(N)/f^2\equiv0,1\ (4)}}\frac1f
 \sum_{m\leq T}\frac{\chi_{\Delta_{p,r}(N)/f^2}(m)}m.
                                                        \label{eq:H-truncated}
\end{align}

\begin{lemma}[Outer tails]\label{lem:outer-tails}
Assume \(K\geq X^{\eps_0}\) and choose \(0<\vartheta<\eps_0\).  For every
\(A>0\),
\begin{align}
 \frac{|\mathcal H_{X,K}(p)-\mathcal H^{\mathrm{tr}}_{X,K}(p)|}
      {X^2K^2}
 \ll_{A,\eta}{}&
 R^{-A}
 +(\log Y)^3(D^{-2}+X^{-1})
 +Y^{-\delta_B(\eta)+o(1)}.                 \label{eq:outer-tails}
\end{align}
The same relative estimate holds after summing over
\(p\asymp P=XK^2\) with weight \(\log p\), with \(X^2K^2\) replaced by
\(XP^2=X^3K^4\).
\end{lemma}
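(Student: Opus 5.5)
The plan is to pass from $\mathcal H_{X,K}(p)$ to $\mathcal H^{\mathrm{tr}}_{X,K}(p)$ in three steps, one for each term on the right of \eqref{eq:outer-tails}. The first step truncates the elliptic index at $r\leq R$. The second applies the class-number expansion \eqref{eq:H1-L-expansion} and truncates the square divisors at $f\leq D$. The third replaces each $L(1,\chi_{\Delta/f^2})$ by its Burgess-truncated sum via Lemma~\ref{lem:burgess-uniform}. Throughout, the normalization is $X^2K^2$. We sum about $X$ values of $N$, the kernel satisfies $\Kern_r\ll K(1+r)^{-A}$ by Proposition~\ref{prop:chebyshev-kernel}, and $\sqrt{|\Delta_{p,r}(N)|}\ll\sqrt{Np}\asymp XK$. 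The trivial size of the main expression is therefore $X\cdot K\cdot XK=X^2K^2$.

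\textbf{Tail $r>R$.} We use the crude bound $H_1(\Delta)\ll|\Delta|^{1/2}\log|\Delta|$, which follows from \eqref{eq:H1-L-expansion} together with $L(1,\chi)\ll\log$ and $\sum_{f^2\mid\Delta}f^{-1}\ll\log\log$. Combined with $|\Kern_r|\ll_AK(1+r)^{-A}$, this bounds the contribution of $r>R$ by $X\cdot K\cdot XK\log Y\sum_{r>R}r^{-A}$, which after enlarging $A$ is $\ll X^2K^2R^{-A}$. For $r\leq R$ we insert \eqref{eq:H1-L-expansion}, which produces exactly the weight $\Phi_{N,r}(p)/\pi$ times the $f$-sum.

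\textbf{Tail $f>D$.} Each $L(1,\chi)\ll\log Y$. It therefore suffices to show that
\[
\sum_{N\asymp X}^{\mathrm{sf}}\sum_{\substack{f>D\\ f^2\mid\Delta_{p,r}(N)}}f^{-1}\ll (\log Y)^2(XD^{-2}+1).
\]
Write $\Delta_{p,r}(N)=-N(4p-r^2N)$. Since $N$ is squarefree and $p\nmid N$, for odd $\ell$ we have $\ell\mid N\Rightarrow v_\ell(\Delta)=1$, as in the proof of Theorem~\ref{thm:exact-fricke-trace}. Hence, apart from a bounded power of $2$, any $f$ with $f^2\mid\Delta$ is coprime to $N$ and satisfies $f^2\mid 4p-r^2N$, that is, $r^2N\equiv4p\pmod{f^2}$. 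For fixed $r\ge1$ and $f$ coprime to $r$, the number of $N\asymp X$ in this progression is $\ll X/f^2+1$. Divisors of $r$ are handled by writing $g=(f,r)$; since $p$ is a large prime, $g^2\mid 4p$ forces $g\leq2$. The range $D<f\ll\sqrt{XP}$ then contributes $\sum_f f^{-1}(X/f^2+1)\ll XD^{-2}+\log Y$. The case $r=0$ is simpler, since there $f^2\mid4Np$ forces $f\le2$.

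\textbf{Summing the tails and the Burgess step.} Multiplying the $f>D$ bound by $K\cdot XK\cdot\log Y$ and summing over $r\leq R$ with the decay $(1+r)^{-A}$ gives relative error $(\log Y)^3(D^{-2}+X^{-1})$. The Burgess step applies Lemma~\ref{lem:burgess-uniform}, which is valid because $|\Delta/f^2|\ll XP\asymp Y^2$. The resulting error $Y^{-\delta_B}$ is multiplied by $\sum_{f^2\mid\Delta}f^{-1}\ll Y^{o(1)}$ and by the trivial size, giving relative error $Y^{-\delta_B(\eta)+o(1)}$.

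\textbf{Prime-averaged version and main obstacle.} The averaged statement follows in the same way by summing the pointwise bounds over $p\asymp P$ with weight $\log p$. This sum contributes a factor $P$, turning $X^2K^2$ into $X^3K^4$, since every estimate above is uniform in $p$. The main obstacle is the $f>D$ counting. It rests on the local argument that odd primes dividing $N$ cannot enter $f$, which is needed to count $N$ in residue classes modulo $f^2$. It also requires controlling the loss when $(f,r)>1$ using primality of $p$. I would first verify the case $\ell\mid(f,r)$ carefully: there $\ell^2\mid4p$ forces $\ell=2$, and the $2$-adic factors contribute only $O(1)$.
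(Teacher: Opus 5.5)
Your proposal is correct and follows essentially the same route as the paper. You remove $r>R$ via the kernel decay of Proposition~\ref{prop:chebyshev-kernel} and a crude bound on $H_1$, bound the tail $f>D$ by counting squarefree $N$ in the single class $r^2N\equiv4p$ modulo the odd part of $f^2$ (using $(f_{\mathrm o},Nr)=1$ and an absolute $2$-adic loss), and apply Lemma~\ref{lem:burgess-uniform} weighted by $\sum_{f\leq D,\,f^2\mid\Delta}f^{-1}$. The only slip is cosmetic: combining $L(1,\chi)\ll\log$ with $\sum_{f^2\mid\Delta}f^{-1}\ll\log\log$ gives $|\Delta|^{1/2}\log|\Delta|\log\log|\Delta|$ rather than $|\Delta|^{1/2}\log|\Delta|$, and this extra factor is absorbed into $R^{-A}$ exactly as the paper absorbs its $(\log Y)^2$.
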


\begin{proof}
The standard estimate \(L(1,\chi_d)\ll\log(2|d|)\), together with
\eqref{eq:H1-L-expansion}, gives
\[
 H_1(\Delta)
 \ll |\Delta|^{1/2}\log(2|\Delta|)
       \sum_{f^2\mid\Delta}\frac1f
 \ll |\Delta|^{1/2}\log^2(2|\Delta|).
\]
Proposition~\ref{prop:chebyshev-kernel} therefore gives
\[
 \sum_{r>R}\sum_{N\asymp X}
 |\Kern_r(N,p)H_1(\Delta_{p,r}(N))|
 \ll_A X^2K^2(\log Y)^2R^{-A}.
\]
For an odd \(f\) occurring in \eqref{eq:H1-L-expansion}, squarefreeness
of \(N\) implies \((f,Nr)=1\) and
\[
                  r^2N\equiv4p\pmod{f^2}.
\]
Thus \(N\) lies in one class modulo the odd part of \(f^2\); the factor
at \(2\) changes the count by an absolute factor.  Since
\(L(1,\chi)\ll\log(2|\Delta|)\), the range \(f>D\) contributes at most
\[
 XK(\log Y)^2
 \sum_{f>D}\frac1f\left(\frac X{f^2}+1\right)
 \ll X^2K(\log Y)^3(D^{-2}+X^{-1})
\]
before the kernel is inserted.  The kernel contributes \(O(K)\).

Finally, the Burgess error occurs before the sum over \(m\).  For each
discriminant,
\[
 \sum_{\substack{f\leq D\\f^2\mid\Delta}}\frac1f\ll\log D.
\]
After summing \(N\) and \(r\), its relative contribution is
\(Y^{-\delta_B(\eta)+o(1)}\).  Summation over the primes introduces the
same factor \(P\) in the error and in the main scale.
\end{proof}

\subsection{Completion of the zero frequency}

For \(N,f,m\geq1\) and \(r\geq0\), let \(h_{N,r,f,m}\) be the periodic function of
the prime variable
\begin{equation}\label{eq:h-prime}
 h_{N,r,f,m}(a)=
 \mathbf1_{f^2\mid\Delta_{a,r}(N)}
 \mathbf1_{\Delta_{a,r}(N)/f^2\equiv0,1\ (4)}
 \left(\frac{\Delta_{a,r}(N)/f^2}{m}\right).
\end{equation}
Its period divides \(8f^2m\).  Define its signed mean on the reduced
classes by
\begin{equation}\label{eq:alpha-Nr}
 \alpha_{N,r}(f,m)=
 \frac1{\varphi(8f^2m)}
 \sum_{a\bmod 8f^2m}^{*}h_{N,r,f,m}(a).
\end{equation}
The signed average in \eqref{eq:alpha-Nr} supplies the cancellation
needed for the completion in \(f\) and \(m\).

For a fixed prime \(p\), define instead the periodic function of the
level
\begin{equation}\label{eq:h-level}
\widetilde h_{p,r,f,m}(u)=
\mathbf1_{f^2\mid\Delta_{p,r}(u)}
\mathbf1_{\Delta_{p,r}(u)/f^2\equiv0,1\ (4)}
\left(\frac{\Delta_{p,r}(u)/f^2}{m}\right).
\end{equation}
Write \(f=2^jf_{\mathrm o}\), with \(f_{\mathrm o}\) odd, and put
\begin{align}
 \rho_f(u)
 &=\mathbf1_{(u,f_{\mathrm o})=1}
 \begin{cases}
  1,&j=0,\\
  \mathbf1_{2\nmid u},&j\geq1,
 \end{cases}                                           \label{eq:rho-f}\\
 \widetilde h^{\sharp}_{p,r,f,m}(u)
 &=\rho_f(u)\widetilde h_{p,r,f,m}(u).          \label{eq:h-level-sharp}
\end{align}
In the range \(f<p\) used below, the extra factor records the
coprimality and two-adic restrictions which are automatic on squarefree
levels but not on arbitrary residue classes; this is proved in
Lemma~\ref{lem:squarefree-compatible-restriction}.
If \(h\) is periodic modulo \(q\), its squarefree mean is
\begin{equation}\label{eq:squarefree-periodic-mean}
 \mathfrak m_q(h)=
 \sum_{b\geq1}\frac{\mu(b)}{[b^2,q]}
 \sum_{\substack{a\bmod[b^2,q]\\b^2\mid a}}h(a).
\end{equation}
We use \(\mathfrak m_{q,Z}(h)\) for the same sum restricted to
\(b\leq Z\).

\begin{lemma}[Zero frequency and squarefree density]
\label{lem:zero-squarefree-density}
Let \(h\) be a bounded periodic function modulo \(q\), and let
\(G\in C_c^\infty((0,\infty))\).  After opening
\[
                         \mu^2(N)=\sum_{b^2\mid N}\mu(b),
\]
the total zero Fourier frequency in
\(\sum_{N\geq1}\mu^2(N)h(N)G(N)\) coming from \(b\leq Z\) is exactly
\[
                  \mathfrak m_{q,Z}(h)
                  \int_{\mathbb R}G(t)\,dt.
\]
Moreover,
\begin{equation}\label{eq:squarefree-mean-tail}
 \left|\mathfrak m_q(h)-\mathfrak m_{q,Z}(h)\right|
 \leq \lVert h\rVert_\infty\sum_{b>Z}b^{-2}
 \ll \lVert h\rVert_\infty Z^{-1}.                 
\end{equation}
In particular, \eqref{eq:squarefree-periodic-mean} is absolutely
convergent.  Its value is unchanged if \(q\) is replaced by a
multiple which is also a period of \(h\).  One also has
\begin{equation}\label{eq:squarefree-mean-Cesaro}
 \mathfrak m_q(h)=
 \lim_{Y\to\infty}\frac1Y\sum_{1\leq n\leq Y}\mu^2(n)h(n).
\end{equation}
Consequently, if \(h_1,h_2\) are bounded functions whose periods divide
\(q\), and
\(\mu^2(n)(h_1(n)-h_2(n))=0\) for every \(n\), then
\begin{align}
 \mathfrak m_q(h_1)&=\mathfrak m_q(h_2),
                                                        \label{eq:squarefree-means-agree}\\
 |\mathfrak m_{q,Z}(h_1)-\mathfrak m_{q,Z}(h_2)|
 &\ll (\lVert h_1\rVert_\infty+\lVert h_2\rVert_\infty)Z^{-1}.
                                                        \label{eq:truncated-means-agree}
\end{align}
\end{lemma}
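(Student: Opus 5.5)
We open $\mu^2(N)=\sum_{b^2\mid N}\mu(b)$ and, for each fixed $b\le Z$, rewrite the inner sum so that it runs over a single period. Write $N=b^2c$. Then the function $c\mapsto h(b^2c)$ is periodic modulo $q_b:=[b^2,q]/b^2$. Equivalently, $N$ runs over the residues $a\bmod [b^2,q]$ with $b^2\mid a$.

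Poisson summation for $\sum_{N\equiv a\,(\mathrm{mod}\ [b^2,q])}G(N)$ gives
\[
\frac{1}{[b^2,q]}\sum_{\ell\in\mathbb Z}\widehat G\!\left(\frac{\ell}{[b^2,q]}\right)e\!\left(\frac{a\ell}{[b^2,q]}\right).
\]
The zero frequency $\ell=0$ contributes
\[
\frac{1}{[b^2,q]}\,\widehat G(0)\sum_{\substack{a\bmod[b^2,q]\\ b^2\mid a}}h(a).
\]
Multiplying by $\mu(b)$ and summing over $b\le Z$ gives exactly $\mathfrak m_{q,Z}(h)\int G$. This is a bookkeeping identity; the only point to check is that the inner sum is well defined, which holds because $h$ is $q$-periodic and the condition $b^2\mid a$ is $b^2$-periodic.

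For the tail bound \eqref{eq:squarefree-mean-tail}, note that the number of residues $a\bmod[b^2,q]$ with $b^2\mid a$ is $[b^2,q]/b^2$. Hence the $b$-th term of $\mathfrak m_q(h)$ has absolute value at most $\lVert h\rVert_\infty b^{-2}$. This gives absolute convergence and the bound $\lVert h\rVert_\infty\sum_{b>Z}b^{-2}\ll\lVert h\rVert_\infty Z^{-1}$.

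Invariance under replacing $q$ by a multiple $q'$ that is still a period follows because $[b^2,q']$ is a multiple of $[b^2,q]$. Summing over $a\bmod[b^2,q']$ with $b^2\mid a$ simply repeats each class modulo $[b^2,q]$ exactly $[b^2,q']/[b^2,q]$ times, and this factor cancels against the normalization $1/[b^2,q']$.

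For the Cesàro formula \eqref{eq:squarefree-mean-Cesaro}, write
\[
\frac1Y\sum_{n\le Y}\mu^2(n)h(n)=\sum_{b\le\sqrt Y}\mu(b)\,\frac1Y\sum_{\substack{n\le Y\\ b^2\mid n}}h(n).
\]
For each fixed $b$, periodicity gives
\[
\frac1Y\sum_{\substack{n\le Y\\ b^2\mid n}}h(n)=\frac{1}{[b^2,q]}\sum_{\substack{a\bmod[b^2,q]\\ b^2\mid a}}h(a)+O\!\left(\lVert h\rVert_\infty\frac{q}{Y}\right).
\]
Here the error comes from one incomplete period, which contains at most $[b^2,q]/b^2\le q$ terms, divided by $Y$. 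The terms with $b>B$ are bounded uniformly by $\lVert h\rVert_\infty\sum_{b>B}b^{-2}$, both on the Cesàro side and in $\mathfrak m_q$. We therefore choose $B$ large, let $Y\to\infty$ with $B$ fixed, and then let $B\to\infty$. This truncated-dominated-convergence step is the only mildly delicate point, and it is the main (modest) obstacle; everything else is direct.

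Finally, \eqref{eq:squarefree-means-agree} is immediate from \eqref{eq:squarefree-mean-Cesaro}, since the Cesàro averages of $\mu^2h_1$ and $\mu^2h_2$ coincide term by term. For \eqref{eq:truncated-means-agree}, apply the triangle inequality through the full means:
\[
|\mathfrak m_{q,Z}(h_1)-\mathfrak m_{q,Z}(h_2)|\le|\mathfrak m_{q,Z}(h_1)-\mathfrak m_q(h_1)|+|\mathfrak m_q(h_2)-\mathfrak m_{q,Z}(h_2)|,
\]
and bound each term by the tail estimate \eqref{eq:squarefree-mean-tail}.
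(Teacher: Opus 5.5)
Your proposal is correct and follows the paper's proof step by step. You use Poisson summation on the classes modulo $[b^2,q]$ divisible by $b^2$ for the zero term, and the count of $[b^2,q]/b^2$ such classes for the tail bound and absolute convergence. You use the lifting argument when $q$ is enlarged, and for the Ces\`aro formula you truncate at $b\le B$ and let $Y\to\infty$ before $B\to\infty$; the two comparison statements then follow as in the paper. Your bound $\lfloor Y/b^2\rfloor/Y\le b^{-2}$ for the omitted Ces\`aro terms is even slightly cleaner than the paper's $b^{-2}+Y^{-1}$.
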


\begin{proof}
For fixed \(b\), put \(L_b=[b^2,q]\).  Decomposition into classes
modulo \(L_b\), followed by Poisson summation, gives, with
\(\widehat G(\tau)=\int_{\mathbb R}G(t)e(-t\tau)\,dt\),
\begin{align*}
 \sum_{\substack{N\in\mathbb Z\\b^2\mid N}}h(N)G(N)
 ={}&\frac1{L_b}
 \sum_{\substack{a\bmod L_b\\b^2\mid a}}h(a)
 \sum_{j\in\mathbb Z}e(ja/L_b)
 \widehat G(j/L_b).
\end{align*}
The term \(j=0\) is
\[
 \frac1{L_b}\sum_{\substack{a\bmod L_b\\b^2\mid a}}h(a)
 \int_{\mathbb R}G(t)\,dt.
\]
Summing with weight \(\mu(b)\) proves the first assertion.  There are
exactly \(L_b/b^2\) classes in the inner sum, so its contribution in
absolute value is at most \(\lVert h\rVert_\infty/b^2\).  This proves
\eqref{eq:squarefree-mean-tail} and absolute convergence.

Finally, if \(q\mid q'\), every admissible class modulo \([b^2,q]\)
has \([b^2,q']/[b^2,q]\) lifts modulo \([b^2,q']\).  Their
multiplicity cancels the enlarged denominator.

To prove \eqref{eq:squarefree-mean-Cesaro}, first truncate the identity
for \(\mu^2\) at \(b\leq B\).  Periodic counting gives
\[
 \lim_{Y\to\infty}\frac1Y\sum_{n\leq Y}h(n)
       \sum_{\substack{b\leq B\\b^2\mid n}}\mu(b)
 =\mathfrak m_{q,B}(h).
\]
The mean absolute value of the omitted part is at most
\[
 \lVert h\rVert_\infty
 \sum_{B<b\leq\sqrt Y}
 \left(\frac1{b^2}+\frac1Y\right)
 \ll \lVert h\rVert_\infty(B^{-1}+Y^{-1/2}).
\]
Letting first \(Y\to\infty\) and then \(B\to\infty\) proves
\eqref{eq:squarefree-mean-Cesaro}.  Apply it to \(h_1-h_2\) to obtain
\eqref{eq:squarefree-means-agree}; combining that equality with
\eqref{eq:squarefree-mean-tail} for \(h_1\) and \(h_2\) proves
\eqref{eq:truncated-means-agree}.
\end{proof}

\begin{lemma}[Squarefree-compatible restriction]
\label{lem:squarefree-compatible-restriction}
Let \(p\) be an odd prime and \(1\leq f<p\).  Then \(8f^2m\) is a
period of both functions in \eqref{eq:h-level} and
\eqref{eq:h-level-sharp}, and
\begin{equation}\label{eq:sharp-squarefree-equality}
 \mu^2(u)\widetilde h^{\sharp}_{p,r,f,m}(u)
 =\mu^2(u)\widetilde h_{p,r,f,m}(u)
 \qquad(u\geq1).
\end{equation}
Consequently, uniformly for \(Z\geq1\),
\begin{align}
 \mathfrak m_{8f^2m}(\widetilde h^{\sharp}_{p,r,f,m})
 &=\mathfrak m_{8f^2m}(\widetilde h_{p,r,f,m}),
                                                        \label{eq:sharp-full-mean}\\
 \mathfrak m_{8f^2m,Z}(\widetilde h^{\sharp}_{p,r,f,m})
 &=\mathfrak m_{8f^2m}(\widetilde h_{p,r,f,m})+O(Z^{-1}).
                                                        \label{eq:sharp-truncated-mean}
\end{align}
\end{lemma}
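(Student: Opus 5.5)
The plan has three parts: first the periodicity, then the pointwise identity \eqref{eq:sharp-squarefree-equality}, and finally the two mean statements, which follow from Lemma~\ref{lem:zero-squarefree-density}.

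\emph{Periodicity.} Here \(\Delta_{p,r}(u)=r^2u^2-4pu\) is a polynomial in \(u\) with integer coefficients. Hence \(\Delta_{p,r}(u+8f^2m)\equiv\Delta_{p,r}(u)\pmod{8f^2m}\), and in fact modulo \(8f^2m\cdot\gcd\)-multiples, since the difference is divisible by \(8f^2m\). So the condition \(f^2\mid\Delta\) is \(f^2\)-periodic. Given \(f^2\mid\Delta\), the quotient \(\Delta/f^2\) changes by a multiple of \(8m\) when \(u\) shifts by \(8f^2m\). This multiple of \(8m\) preserves the class of \(\Delta/f^2\) modulo \(4\). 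It also preserves the Kronecker symbol modulo \(m\), using the standard periodicity of \((\cdot/m)\) in the top argument with period dividing \(4m\) for discriminants \(\equiv0,1\pmod 4\). Finally, \(\rho_f\) has period dividing \(2f_{\mathrm o}\mid 8f^2m\).

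\emph{Pointwise identity.} It suffices to show that \(\widetilde h_{p,r,f,m}(u)=0\) whenever \(u\) is squarefree and \(\rho_f(u)=0\). First suppose an odd prime \(\ell\mid(u,f_{\mathrm o})\). The indicator requires \(\ell^2\mid u(r^2u-4p)\). Since \(u\) is squarefree, this forces \(\ell\mid r^2u-4p\), hence \(\ell\mid 4p\), hence \(\ell=p\). But \(\ell\le f<p\), a contradiction; this is exactly where the hypothesis \(f<p\) enters. Next suppose \(j\ge1\) and \(u\) is even, so \(u=2u_0\) with \(u_0\) odd. If \(4\mid\Delta\) and \(\Delta/4\equiv0,1\pmod4\), then \(\Delta/4=u_0(r^2u_0-2p)\). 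This is the computation from the proof of Theorem~\ref{thm:exact-fricke-trace}: the product \(-\Delta/4=u_0(2p-r^2u_0)\) is \(\equiv2\pmod4\) for \(r\) even and \(\equiv1\pmod4\) for \(r\) odd. Consequently \(\Delta/4\equiv2\) or \(3\pmod4\). Now \(f^2=4^jf_{\mathrm o}^2\), and \(\Delta/f^2\) equals \(\Delta/4\) divided by \(4^{j-1}f_{\mathrm o}^2\). For \(j\ge2\) we would need \(4\mid\Delta/4\), which is impossible since \(\Delta/4\equiv 2,3\pmod 4\). For \(j=1\), dividing by the odd square \(f_{\mathrm o}^2\equiv1\pmod 8\) preserves the residue modulo \(4\), so the congruence condition fails. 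In every case \(\widetilde h=0\).

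\emph{Means.} Both functions are bounded by \(1\) and have period dividing \(8f^2m\). Applying \eqref{eq:squarefree-means-agree} to the pointwise identity gives \eqref{eq:sharp-full-mean}. Combining \eqref{eq:sharp-full-mean} with \eqref{eq:squarefree-mean-tail} for \(\widetilde h^\sharp\) gives \eqref{eq:sharp-truncated-mean}, with implied constant absolute, hence uniform in \(Z\). The only delicate point is the 2-adic case analysis in the second step, notably the reduction for \(j=1\) with odd \(f_{\mathrm o}\); the rest is bookkeeping.
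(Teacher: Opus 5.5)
Your proposal is correct and follows essentially the same route as the paper. The paper excludes an odd \(\ell\mid(u,f_{\mathrm o})\) because \(\ell\ne p\) forces \(v_\ell(\Delta_{p,r}(u))=1\), excludes even \(u=2u_0\) via \(\Delta_{p,r}(u)/4=u_0(r^2u_0-2p)\equiv 3\) or \(2\pmod 4\) (the discriminant condition for \(j=1\), divisibility for \(j\geq2\)), and obtains the means from Lemma~\ref{lem:zero-squarefree-density}; your only difference is a more explicit periodicity check where the paper gives a one-line justification.
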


\begin{proof}
The period assertion follows from \(f_{\mathrm o}\mid f\) and
\eqref{eq:h-level}.  Let \(u\) be squarefree.  If an odd prime
\(\ell\) divided both \(u\) and \(f_{\mathrm o}\), then \(f<p\) would
give \(\ell\ne p\), and
\[
 v_\ell(\Delta_{p,r}(u))
 =v_\ell(u)+v_\ell(4p-r^2u)=1.
\]
This is incompatible with \(f^2\mid\Delta_{p,r}(u)\).

It remains to consider the \(2\)-adic factor when \(j=v_2(f)\geq1\).
Suppose that \(u=2u_0\), with \(u_0\) odd.  Then
\[
 \Delta_{p,r}(u)=4u_0(r^2u_0-2p).
\]
If \(r\) is odd, the second factor is odd and
\[
 \frac{\Delta_{p,r}(u)}4
 =u_0(r^2u_0-2p)\equiv3\pmod4.
\]
If \(r\) is even, that quotient is \(2\pmod4\) and
\(v_2(\Delta_{p,r}(u))=3\).  Thus \(j=1\) is excluded by the
discriminant condition in \eqref{eq:h-level}; division by the remaining
odd square \(f_{\mathrm o}^2\equiv1\pmod4\) does not change either
residue class.  The case \(j\geq2\) is excluded by
\(2^{2j}\nmid\Delta_{p,r}(u)\).  Hence a nonzero
squarefree term with \(j\geq1\) has \(u\) odd.  We have proved
\eqref{eq:sharp-squarefree-equality}.  Equations
\eqref{eq:sharp-full-mean} and \eqref{eq:sharp-truncated-mean} now
follow from Lemma~\ref{lem:zero-squarefree-density} and the bounds
\(\lVert\widetilde h\rVert_\infty,
\lVert\widetilde h^{\sharp}\rVert_\infty\leq1\).
\end{proof}

Put
\begin{align}
 \mathfrak B&=\prod_\ell
 \frac{\ell^4-2\ell^2-\ell+1}{(\ell^2-1)^2},
                                                        \label{eq:B-constant}\\
 \mathfrak A&=\prod_\ell
 \left(1+\frac{\ell}{(\ell+1)^2(\ell-1)}\right).       \label{eq:A-parabolic}
\end{align}

We include the factors from $1/(fm)$ in the local coefficients.  For a
prime $\ell$, put
\[
 \epsilon_\ell(d)=
 \begin{cases}
  \mathbf1_{d\equiv0,1\ (4)},&\ell=2,\\
  1,&\ell\ne2.
 \end{cases}
\]
For $j,t\geq0$ and $M\geq2j+t+3$, define
\begin{align}
 \lambda_{\ell,N,r}(j,t)
 =\frac1{\varphi(\ell^M)}
 \sum_{a\bmod\ell^M}^{*}
 &\mathbf1_{\ell^{2j}\mid\Delta_{a,r}(N)}
 \epsilon_\ell\!\left(
        \frac{\Delta_{a,r}(N)}{\ell^{2j}}\right)
 \left(\frac{\Delta_{a,r}(N)/\ell^{2j}}{\ell^t}\right),
                                                        \label{eq:lambda-local}\\
 a_{\ell,N,r}(j,t)
 &=\ell^{-j-t}\lambda_{\ell,N,r}(j,t).         \label{eq:a-local}
\end{align}

\begin{lemma}[Stabilization and Euler factorization]
\label{lem:local-stabilization}
The value of $\lambda_{\ell,N,r}(j,t)$ is independent of $M$ once
$M\geq2j+t+3$.  Moreover, for all $f,m\geq1$,
\begin{equation}\label{eq:alpha-local-factorization}
 \frac{\alpha_{N,r}(f,m)}{fm}
 =\prod_\ell
 a_{\ell,N,r}\bigl(v_\ell(f),v_\ell(m)\bigr).
\end{equation}
The identity remains valid when $f$ and $m$ are not coprime.
\end{lemma}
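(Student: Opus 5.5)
The argument is local: it combines the Chinese remainder theorem with a lifting count at each prime. It uses one elementary fact throughout. For every odd prime \(\ell\ne2\) and every \(d\), the value \(\epsilon_\ell(d)\) is \(1\), and the symbol \(\bigl(\frac{d}{\ell^t}\bigr)\) is determined by \(d\bmod\ell\). When \(\ell=2\), the Kronecker symbol \(\bigl(\frac{d}{2^t}\bigr)\) for a discriminant \(d\equiv0,1\pmod4\) is determined by \(d\bmod8\). Together with the condition \(\ell^{2j}\mid\Delta\), the summand in \eqref{eq:lambda-local} therefore depends only on \(\Delta_{a,r}(N)\bmod\ell^{2j+3}\). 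Since \(\Delta_{a,r}(N)=-4Na+r^2N^2\) is affine in \(a\), it depends only on \(a\bmod\ell^{2j+3}\), and a fortiori only on \(a\bmod\ell^{M}\) once \(M\geq2j+t+3\). The \(t\) in the bound is harmless slack.

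\textbf{Stabilization.} Let \(M'\geq M\geq 2j+t+3\). Each reduced class modulo \(\ell^M\) has exactly \(\ell^{M'-M}=\varphi(\ell^{M'})/\varphi(\ell^M)\) reduced lifts modulo \(\ell^{M'}\). The summand is constant on the lifts of a given class, so the multiplicity cancels the enlarged denominator. This is the same computation as in Lemma~\ref{lem:zero-squarefree-density}.

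\textbf{Factorization.} Write \(q=8f^2m\) and \(q=\prod_\ell\ell^{e_\ell}\), enlarging \(q\) if necessary so that \(e_\ell\geq 2v_\ell(f)+v_\ell(m)+3\) for every \(\ell\mid q\). This enlargement does not change \(\alpha_{N,r}(f,m)\), by the lift-counting argument just given. Now factor each ingredient of \(h_{N,r,f,m}(a)\).

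\begin{itemize}
\item The divisibility \(f^2\mid\Delta\) is the product over \(\ell\) of \(\ell^{2v_\ell(f)}\mid\Delta\).
\item The congruence condition modulo \(4\) on \(\Delta/f^2\) equals \(\epsilon_2(\Delta/2^{2v_2(f)})\), because the odd square \(f_{\mathrm o}^2\) is \(\equiv1\pmod 8\).
\item The Jacobi--Kronecker symbol factors as \(\bigl(\frac{\Delta/f^2}{m}\bigr)=\prod_{\ell\mid m}\bigl(\frac{\Delta/f^2}{\ell^{v_\ell(m)}}\bigr)\).
\end{itemize}

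For the last item one must replace \(\Delta/f^2\) by \(\Delta/\ell^{2v_\ell(f)}\) in the \(\ell\)-factor. The two quotients differ by the square of the \(\ell\)-free part of \(f\), which is a unit at \(\ell\). That factor is a square, so the symbol is unchanged. At \(\ell=2\), squares of odd numbers are \(1\pmod8\), so the Kronecker symbol at \(2\) is also unchanged. If the quotient and \(\ell\) are not coprime, both sides vanish together.

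By the Chinese remainder theorem, reduced classes \(a\bmod q\) correspond to tuples of reduced classes \(a_\ell\bmod\ell^{e_\ell}\), and \(\varphi(q)=\prod\varphi(\ell^{e_\ell})\). The average of a product of functions, each depending only on its own \(a_\ell\), is the product of the local averages. This gives \(\alpha_{N,r}(f,m)=\prod_{\ell\mid q}\lambda_{\ell,N,r}(v_\ell(f),v_\ell(m))\). For \(\ell\nmid q\) one has \(\lambda_{\ell,N,r}(0,0)=1\), so the product may be extended to all \(\ell\). Dividing by \(fm=\prod\ell^{v_\ell(f)+v_\ell(m)}\) yields \eqref{eq:alpha-local-factorization}. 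Nothing in the argument requires \((f,m)=1\): a prime dividing both \(f\) and \(m\) is handled inside a single local factor.

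\textbf{Main obstacle.} The only delicate step is the prime \(2\). There one must check three things:
\begin{itemize}
\item the condition \(\Delta/f^2\equiv0,1\pmod4\) and the Kronecker symbol \(\bigl(\frac{\cdot}{2^t}\bigr)\) are local at \(2\), and are unaffected by the odd cofactor of \(f\);
\item the Kronecker symbol at \(2\) depends only on the discriminant modulo \(8\);
\item the modulus \(8f^2m\) leaves the extra \(2\)-adic room that the choice \(M\geq 2j+t+3\) provides.
\end{itemize}
I would verify these directly from the definition \(\bigl(\frac d2\bigr)\in\{0,\pm1\}\) according to \(d\bmod8\).
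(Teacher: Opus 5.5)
Your proposal is correct and follows essentially the same route as the paper: stabilization comes from counting the $\ell^{M'-M}$ unit lifts, and the factorization is a Chinese remainder splitting of the reduced-class average. In that splitting, the $\ell$-free part of $f$ contributes a unit square at odd $\ell$ and an odd square $\equiv1\pmod 8$ at $2$, and a prime dividing both $f$ and $m$ is absorbed into the single local pair $(v_\ell(f),v_\ell(m))$. The only cosmetic difference is that you first enlarge the global modulus so that every local exponent is at least $2j+t+3$, whereas the paper uses the same lifting count to identify the local averages taken from $8f^2m$ with the stabilized values.
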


\begin{proof}
Fix $\ell,j,t$.  For odd $\ell$, the divisibility condition depends on
$a$ modulo $\ell^{2j}$; if $t>0$, the local symbol additionally depends on
the quotient modulo $\ell$.  Thus the local period divides $\ell^{2j}$
when $t=0$ and $\ell^{2j+1}$ when $t>0$.  At $\ell=2$, the discriminant
indicator and the Kronecker symbol depend only on the quotient modulo eight,
so the local period divides $2^{2j+3}$.  In particular, the integrand is
constant on residue classes modulo $\ell^{2j+t+3}$.  Every unit class modulo
this power has exactly $\ell^{M-(2j+t+3)}$ unit lifts modulo $\ell^M$, and
the same factor occurs in $\varphi(\ell^M)$.  This proves stabilization;
the same lifting argument identifies the local average extracted from the
global modulus $8f^2m$ with this stabilized value.

Now write $j_\ell=v_\ell(f)$ and $t_\ell=v_\ell(m)$.  On reduced residue
classes modulo $8f^2m$, the square-divisor condition, the discriminant
condition, and the Kronecker symbol decompose prime by prime.  At an odd
prime, the square factors contributed by the other prime divisors of $f$
are unit squares and hence do not change the local symbol.  At two, every
odd square is congruent to one modulo eight.  The Chinese remainder theorem
therefore factors the normalized global average as the product of the
stabilized local averages.  A prime dividing both $f$ and $m$ is handled by
the single pair $(j_\ell,t_\ell)$, so no coprimality assumption is used.
Multiplying by $\prod_\ell\ell^{-j_\ell-t_\ell}=1/(fm)$ gives
\eqref{eq:alpha-local-factorization}.
\end{proof}

\begin{proposition}[Signed local completion]
\label{prop:signed-local-completion}
For every \(\xi>0\), uniformly in squarefree \(N\) and \(r\geq1\),
\begin{equation}\label{eq:signed-majorant}
 \sum_{f,m\geq1}\frac{|\alpha_{N,r}(f,m)|}{fm}
 f^{2-\xi}m^{1/2-\xi}\ll_\xi r^\xi.
\end{equation}
Consequently the tails \(f>D\) and \(m>T\) are respectively
\begin{equation}\label{eq:signed-tails}
 O_\xi(r^\xi D^{-2+\xi}),
 \qquad
 O_\xi(r^\xi T^{-1/2+\xi}).
\end{equation}
The fixed-prime zero coefficient satisfies, whenever \(D,T,r<p\),
\begin{align}
 &\sum_{f\leq D}\frac1f\sum_{m\leq T}\frac1m
 \mathfrak m_{8f^2m}(\widetilde h_{p,r,f,m}) \notag\\
 &\qquad=\frac{\mathfrak B\nu(r)}{\zeta(2)}
 +O_\xi\!\left(r^\xi
 \{D^{-2+\xi}+T^{-1/2+\xi}\}
 +p^{-2}\right).
                                                        \label{eq:fixed-zero-series}
\end{align}
For the half-weight channel \(r=0\), one has
\begin{align}
 &\sum_{f,m\geq1}\frac{|\omega_0\alpha_{N,0}(f,m)|}{fm}
 f^{2-\xi}m^{1/2-\xi}\ll_\xi1.                 \label{eq:r0-majorant}
\end{align}
If in addition \(D,T<p\), then
\begin{align}
 &\omega_0\sum_{f\leq D}\frac1f\sum_{m\leq T}\frac1m
 \mathfrak m_{8f^2m}(\widetilde h_{p,0,f,m}) \notag\\
 &\qquad=\frac{\mathfrak A}{2\zeta(2)}
 +O_\xi(D^{-2+\xi}+T^{-1/2+\xi}+p^{-2}).
                                                        \label{eq:r0-fixed-zero}
\end{align}
\end{proposition}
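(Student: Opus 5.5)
We use the Euler factorization of Lemma~\ref{lem:local-stabilization} throughout. By \eqref{eq:alpha-local-factorization} the majorant in \eqref{eq:signed-majorant} is bounded by
\[
 \prod_\ell\sum_{j,t\geq0}|a_{\ell,N,r}(j,t)|\,\ell^{(2-\xi)j+(1/2-\xi)t},
\]
so the first task is a set of local bounds for $\lambda_{\ell,N,r}(j,t)$.

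\emph{Local bounds.} For odd $\ell\nmid Nr$ we would show the following.
\begin{itemize}
\item The condition $\ell^{2j}\mid -N(4a-r^2N)$ fixes $a$ modulo $\ell^{2j}$, giving density $\ell^{-2j}$ among units.
\item If $t\geq1$ is odd, the Legendre symbol of the quotient averages to zero over the free digit modulo $\ell$. This holds unless the quotient is forced to be divisible by $\ell$, which happens with density $\ell^{-1}$.
\item If $t$ is even, the symbol is the principal character.
\end{itemize}
Hence $|\lambda|\ll\ell^{-2j}$ for even $t$ and $|\lambda|\ll\ell^{-2j-1}$ for odd $t$. The summand is then
\[
 |a|\,\ell^{(2-\xi)j+(1/2-\xi)t}\ll\ell^{-\xi j-(1/2+\xi)t},
\]
with an extra $\ell^{-1}$ when $t$ is odd. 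We would actually need $\lambda(0,1)$ to vanish exactly (or be $O(\ell^{-2})$) for $\ell\nmid Nr$, so that the local factor is $1+O(\ell^{-1-\xi})$. This holds because $a\mapsto -N(4a-r^2N)$ is a bijection modulo $\ell$, and the sum of the Legendre symbol over units equals minus the single excluded value, which is $O(1/\varphi(\ell))$.

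For $\ell\mid N$ with $\ell$ odd, squarefreeness forces $j=0$, and the local factor is bounded by an absolute constant times $1+O(\ell^{-1/2-\xi})$. More carefully, it is $1+O(\ell^{-1})$, since for $t\geq1$ the symbol vanishes because $\ell\mid\Delta$. For $\ell\mid r$ the factor is bounded by $\sum_j\ell^{-\xi j}\ll1$, and the product over such $\ell$ gives $\ll r^\xi$. The prime $2$ contributes a constant.

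Multiplying these gives \eqref{eq:signed-majorant}. The tails \eqref{eq:signed-tails} follow by Rankin's trick: on $f>D$ insert $(f/D)^{2-\xi}$, and on $m>T$ insert $(m/T)^{1/2-\xi}$.

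\emph{Zero coefficient.} We would identify $\mathfrak m_{8f^2m}(\widetilde h_{p,r,f,m})$ with the level-side analogue of $\alpha$. By \eqref{eq:sharp-full-mean} we may replace $\widetilde h$ by $\widetilde h^\sharp$. The squarefree mean then factors through the Chinese remainder theorem into local factors: a local squarefree density at each $\ell$, weighted by the local divisibility and symbol conditions in $u$ modulo $\ell^{2j+t+3}$. Here $4p$ plays the role that $r^2N$ played before. Because $D,T,r<p$, every prime $\ell\mid fm$ or $\ell\mid r$ differs from $p$, and the computation at such primes is uniform in $p$. The complete double series over all $f,m$ is then an Euler product. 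At each $\ell\ne p$ we would evaluate $\sum_{j,t}\ell^{-j-t}(\text{local mean})$ explicitly:
\begin{itemize}
\item for $\ell\nmid r$ this should give $(1-\ell^{-2})\cdot D_\ell/(\ell^2-1)^2$;
\item for $\ell\mid r$ the same value multiplied by $1+\ell^2/D_\ell$.
\end{itemize}
The product is $\mathfrak B\nu(r)/\zeta(2)$. The factor at $\ell=p$ deviates from its generic value by $O(p^{-2})$. Truncating at $f\leq D$, $m\leq T$ costs the tails already bounded, since the level-side series obeys the same local bounds. This proves \eqref{eq:fixed-zero-series}.

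\emph{The case $r=0$.} Here $\Delta=-4Np$ (respectively $-4ap$), and the analogous computation yields $\mathfrak A/2\zeta(2)$ together with the $\omega_0=1/2$ factor. This gives \eqref{eq:r0-majorant} and \eqref{eq:r0-fixed-zero}.

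\emph{Main obstacle.} The hard step is the explicit local evaluation matching $D_\ell$ and $\mathfrak A$, especially at $\ell=2$ with the discriminant congruence. We would first compute the odd-$\ell$ factors by hand. For $\ell=2$ we would check consistency against Zubrilina's constants, which encode the same local densities.
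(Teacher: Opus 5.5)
Your argument for \eqref{eq:signed-majorant} and the tails follows the paper's route: Euler factorization, local bounds, Rankin's trick. Two slips are easily repaired.
First, from your own bound $|\lambda_{\ell,N,r}(j,t)|\ll\ell^{-2j}$ the weighted summand is $\ll\ell^{-(1+\xi)j-(1/2+\xi)t}$, not $\ell^{-\xi j-(1/2+\xi)t}$. This already gives the generic factor $1+O(\ell^{-1-\xi})$, with no extra vanishing of $\lambda(0,1)$ needed.
Second, for $\ell\mid N$ your own reasoning shows the factor is exactly $1$. Uniformity in $N$ requires this, since a factor $1+O(\ell^{-1})$ at each $\ell\mid N$ would not be uniformly bounded.

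A genuine problem is \eqref{eq:r0-majorant}. Your treatment of $\ell\mid r$ (each factor $O_\xi(1)$, product over $\ell\mid r$ bounded by $r^\xi$) says nothing at $r=0$. There every prime divides $r$, and a product of constants larger than $1$ over all primes diverges. You need the sharper local fact the paper uses: if $\ell\mid r$ and $\ell\nmid N$ (with $\ell$ odd), then $\Delta_{a,r}(N)\equiv-4aN$ is an $\ell$-adic unit. So $j=0$ is forced, odd $t$ average to zero, and the factor is $1+\sum_{t\geq2\ \mathrm{even}}\ell^{-(1/2+\xi)t}=1+O(\ell^{-1-2\xi})$. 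This is summable over all primes, and in fact makes \eqref{eq:signed-majorant} uniform in $r$.

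The main gap is the zero coefficient, which is the real content of \eqref{eq:fixed-zero-series} and \eqref{eq:r0-fixed-zero}. You only state what the level-side Euler factors ``should give'' and propose to check $\ell=2$ against Zubrilina's constants. Matching constants is not a proof, and upper bounds on $\lambda$ cannot produce exact values.

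The paper first computes exact prime-side tables, odd and $2$-adic, for $a_{\ell,N,r}(j,t)$; their sums are $A^{(0)}_{\ell,r}$ for $\ell\nmid N$ and $1$ for $\ell\mid N$. It then avoids any separate level-side computation via the identity $\Delta_{a,r}(au)=a^2\Delta_{1,r}(u)$. Reversing the finite local sums with $n=au$, and then substituting $n=pu$, shows the following for every $\ell\neq p$, including $\ell=2$ since $p^2\equiv1\pmod 8$: the fixed-$p$ level coefficient equals the prime-side coefficient averaged with the squarefree masses $1-\ell^{-1}$ and $\ell^{-1}-\ell^{-2}$. This gives the factor $C_{\ell,r}=(1-\ell^{-1})A^{(0)}_{\ell,r}+\ell^{-1}-\ell^{-2}$ directly, with product $\mathfrak B\nu(r)/\zeta(2)$ (resp.\ $\mathfrak A/\zeta(2)$ for $r=0$). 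It also gives the level-side Rankin majorant uniformly in $p$.

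At $\ell=p$ you should say explicitly that you use the $p$-depleted series over $(fm,p)=1$, which is all that occurs since $D,T<p$. Its factor at $p$ is $1-p^{-2}$. This differs from $C_{p,r}$ by $O(p^{-3})$ for $r\geq1$, and by $1/(p(p+1))$ when $r=0$. Completing literally over all $f,m$, as your wording suggests, would give a factor $1+1/p+O(p^{-2})$ at $p$ when $r\geq1$. Your claimed $O(p^{-2})$ deviation would then be false. That discrepancy consists of tail terms with $p\mid fm$ and is absorbed by $T^{-1/2+\xi}$, but the argument has to say so.
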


\begin{proof}
By Lemma~\ref{lem:local-stabilization}, it remains to calculate the
stabilized local factors.  We first treat an odd prime.  The following
table is exhaustive.  The three alternatives $\ell\nmid Nr$,
$\ell\mid r$ with $\ell\nmid N$, and $\ell\mid N$ cover every case;
entries not displayed are zero.
\begin{equation}\label{eq:odd-local-table}
\begin{array}{c|c|c|c}
 v_\ell(N)&\text{condition}&(j,t)&a_{\ell,N,r}(j,t)\\ \hline
0&\ell\nmid r&(0,0)&1\\
0&\ell\nmid r&(0,t),\ t\geq1\ {\rm odd}
 &-\dfrac{\ell^{-t}}{\ell-1}\\[2mm]
0&\ell\nmid r&(0,t),\ t\geq2\ {\rm even}
 &\dfrac{\ell-2}{\ell-1}\ell^{-t}\\[2mm]
0&\ell\nmid r&(j,0),\ j\geq1
 &\dfrac{\ell^{1-3j}}{\ell-1}\\[2mm]
0&\ell\nmid r&(j,t),\ j\geq1,\ t\geq2\ {\rm even}
 &\ell^{-3j-t}\\[2mm] \hline
0&\ell\mid r&(0,0)&1\\
0&\ell\mid r&(0,t),\ t\geq2\ {\rm even}&\ell^{-t}\\[2mm] \hline
1&\text{all }r&(0,0)&1.
\end{array}
\end{equation}

Suppose first that \(\ell\nmid Nr\).  The affine polynomial
\(r^2N-4a\) has a unique unit root \(a_0\), and
\[
 \frac1{\ell-1}\sum_{a\in\mathbb F_\ell^\times}
 \left(\frac{N(r^2N-4a)}\ell\right)=-\frac1{\ell-1}.
\]
Indeed, the sum over \(\mathbb F_\ell\) is zero, while the omitted
value at \(a=0\) is one.  Normalized Haar measure on
\(\mathbb Z_\ell^\times\) gives, for \(s\geq1\),
\begin{equation}\label{eq:root-shell-masses}
 \Pr\bigl(v_\ell(a-a_0)\geq s\bigr)=\frac1{\varphi(\ell^s)},
 \qquad
 \Pr\bigl(v_\ell(a-a_0)=s\bigr)=\ell^{-s}.
\end{equation}
When \(j\geq1,t=0\), the whole ball of radius \(\ell^{-2j}\)
contributes.  When \(t>0\), only the shell of exact valuation
\(2j\) contributes.  Its quadratic character has mean zero for odd
\(t\) and equals one for positive even \(t\).  This proves the first
five rows of \eqref{eq:odd-local-table}.

If \(\ell\mid r\) and \(\ell\nmid N\), then \(r^2N-4a\) is a unit,
so \(j=0\).  The odd powers are a constant times
\(\bigl(\frac a\ell\bigr)\) and average to zero; the positive even
powers equal one.  If \(v_\ell(N)=1\), then
\(v_\ell(\Delta_{a,r}(N))=1\) for every unit \(a\), which leaves only
\((j,t)=(0,0)\).  This completes the table.

Summing its rows gives
\begin{align}
 A_{\ell,r}^{(0)}:=\sum_{j,t\geq0}a_{\ell,N,r}(j,t)
 &=1-\frac1{(\ell-1)(\ell^2-1)},
 &&\ell\nmid r,\quad v_\ell(N)=0,              \label{eq:a-unit-N}\\
 A_{\ell,r}^{(0)}&=(1-\ell^{-2})^{-1},
 &&\ell\mid r,\quad v_\ell(N)=0,              \label{eq:a-unit-r}\\
 A_{\ell,r}^{(1)}&=1,
 &&v_\ell(N)=1.                                \label{eq:odd-valuation-one}
\end{align}
For example, when \(\ell\nmid r\), the contribution with \(j=0\)
is \(1-2/((\ell-1)(\ell^2-1))\), whereas
\begin{equation}\label{eq:positive-j-sum}
 \sum_{j\geq1}\left\{
 \frac{\ell^{1-3j}}{\ell-1}
 +\sum_{\substack{t\geq2\\2\mid t}}\ell^{-3j-t}\right\}
 =\frac1{(\ell-1)(\ell^2-1)}.
\end{equation}

We next give the complete calculation at \(2\), with
\(v_2(0)=+\infty\).  The nonzero coefficients are
\begin{equation}\label{eq:two-adic-local-table}
\begin{array}{c|c|c}
 \text{condition}&(j,t)&a_{2,N,r}(j,t)\\ \hline
2\nmid N,\ 2\nmid r&(0,t),\ t\geq0&(-1)^t2^{-t}\\[1mm] \hline
2\nmid N,\ v_2(r)=1&(0,0)&1\\
2\nmid N,\ v_2(r)=1&(1,0)&2^{-2}\\
2\nmid N,\ v_2(r)=1&(j,0),\ j\geq2&2^{2-3j}\\
2\nmid N,\ v_2(r)=1&(j,t),\ j\geq2,\ t\geq2\ {\rm even}
 &2^{1-3j-t}\\[1mm] \hline
2\nmid N,\ v_2(r)\geq2&(0,0)&1\\
2\nmid N,\ v_2(r)\geq2&(1,t),\ t\geq0\ {\rm even}
 &2^{-t-2}\\[1mm] \hline
2\mid N&(0,0)&1.
\end{array}
\end{equation}

If \(N\) and \(r\) are odd, then
\(\Delta_{a,r}(N)\equiv5\pmod8\) for every odd \(a\).  Thus
\(j=0\), the discriminant condition is automatic, and the local
Kronecker symbol equals \(-1\).  This gives the first row.

Suppose that \(N\) is odd and \(r\) is even.  Write
\begin{equation}\label{eq:two-adic-c}
 c=\frac{r^2N}{4},\qquad
 \Delta_{a,r}(N)=4N(c-a).
\end{equation}
If \(v_2(r)\geq2\), then \(c\equiv0\pmod4\) and
\(v_2(\Delta)=2\).  For \(j=1\), the quotient \(N(c-a)\) is an
admissible discriminant precisely when \(a\equiv-N\pmod4\).  Its two
lifts modulo eight give quotient discriminants congruent to one and
five modulo eight.  Odd powers cancel, whereas every positive even
power has mean \(1/2\), giving the third block of
\eqref{eq:two-adic-local-table}.

If \(v_2(r)=1\), then \(c\) is odd.  Put \(w=v_2(c-a)\).  On the odd
classes,
\begin{equation}\label{eq:two-adic-shells}
 \Pr(w=s)=2^{-s}\quad(s\geq1),\qquad
 \Pr(w\geq s)=2^{1-s}.
\end{equation}
For \(j=1\), admissibility is equivalent to \(w\geq2\), giving
\(a_{2,N,r}(1,0)=1/4\).  For \(j\geq2\), the quotient has valuation
zero when \(w=2j-2\), valuation one when \(w=2j-1\), and valuation at
least two when \(w\geq2j\).  The middle shell is inadmissible.  Half
of the first shell has odd quotient congruent to one modulo four; on
that half, the residues one and five modulo eight occur equally often.
Before inserting \(2^{-j-t}\), the means are therefore
\[
 2^{2-2j}\quad(t=0),\qquad
 0\quad(t\ {\rm odd}),\qquad
 2^{1-2j}\quad(t\geq2\ {\rm even}),
\]
which gives the second block of the table.

Finally, if \(N=2N_0\) with \(N_0\) odd, then
\[
 \Delta_{a,r}(N)=4N_0(r^2N_0-2a).
\]
After division by four, the quotient is congruent to three modulo four
when \(r\) is odd, and to two modulo four when \(r\) is even.  Hence
\(j=1\) is inadmissible; for \(j=0\), every term with \(t>0\)
vanishes.  This proves the last row.  Summing
\eqref{eq:two-adic-local-table} yields
\begin{equation}\label{eq:two-adic-unit-sums}
 \sum_{j,t}a_{2,N,r}(j,t)=
 \begin{cases}
  2/3,&2\nmid N,\ 2\nmid r,\\
  4/3,&2\nmid N,\ 2\mid r,\\
  1,&2\mid N.
 \end{cases}
\end{equation}
For \(v_2(r)=1\), for instance, the left side is
\[
 1+\frac14+
 \sum_{j\geq2}\left(2^{2-3j}
 +\sum_{\substack{t\geq2\\2\mid t}}2^{1-3j-t}\right)
 =1+\frac14+\frac1{14}+\frac1{84}=\frac43.
\]
From now on we write \(a_{\ell,r}=A_{\ell,r}^{(0)}\) for the signed
local sum when \(N\) is a unit; at \(\ell=2\), this means \(2/3\) for
odd \(r\) and \(4/3\) for even \(r\).  In every case
\(A_{\ell,r}^{(1)}=1\).

We now average in the squarefree level.  The unnormalized local masses
of \(v_\ell(N)=0,1\) are
\begin{equation}\label{eq:squarefree-local-masses}
                  1-\ell^{-1},\qquad
                  \ell^{-1}-\ell^{-2}.
\end{equation}
Thus
\begin{equation}\label{eq:completed-local-factor}
 C_{\ell,r}:=(1-\ell^{-1})A_{\ell,r}^{(0)}
 + (\ell^{-1}-\ell^{-2})A_{\ell,r}^{(1)}
 =\begin{cases}
 \displaystyle
 \frac{\ell^4-2\ell^2-\ell+1}{\ell^2(\ell^2-1)},
       &\ell\nmid r,\\[3mm]
 \displaystyle
 \frac{\ell^4-\ell^2-\ell+1}{\ell^2(\ell^2-1)},
       &\ell\mid r.
 \end{cases}
\end{equation}
At two, \eqref{eq:two-adic-unit-sums} and the masses \(1/2,1/4\)
give
\[
 C_{2,r}=\frac7{12}\quad(2\nmid r),\qquad
 C_{2,r}=\frac{11}{12}\quad(2\mid r),
\]
which are also the two values in \eqref{eq:completed-local-factor}.
Since \(D_\ell=\ell^4-2\ell^2-\ell+1\),
\begin{equation}\label{eq:completed-product}
 \prod_\ell C_{\ell,r}
 =\prod_\ell\frac{D_\ell}{\ell^2(\ell^2-1)}
  \prod_{\ell\mid r}\left(1+\frac{\ell^2}{D_\ell}\right)
 =\frac{\mathfrak B\nu(r)}{\zeta(2)}.
\end{equation}

We next prove the weighted absolute estimate.  It is enough to take
\(0<\xi\leq1/4\).  From \eqref{eq:odd-local-table}, if
\(\ell\nmid2Nr\),
\begin{align}
 \sum_{j,t\geq0}|a_{\ell,N,r}(j,t)|
 \ell^{(2-\xi)j+(1/2-\xi)t}
 &=1+O_\xi(\ell^{-1-\xi}).                    \label{eq:generic-absolute-factor}
\end{align}
If \(\ell\mid r\) and \(\ell\nmid N\), the corresponding factor is
\begin{equation}\label{eq:ramified-absolute-factor}
 1+\sum_{\substack{t\geq2\\2\mid t}}
 \ell^{-(1/2+\xi)t}
 =1+O_\xi(\ell^{-1-2\xi}),
\end{equation}
and if \(\ell\mid N\) it is one.  The table
\eqref{eq:two-adic-local-table} gives a bounded \(2\)-adic factor.  Hence
\eqref{eq:alpha-local-factorization} and the Euler product prove, in
fact, the stronger bound
\begin{equation}\label{eq:strong-signed-majorant}
 \sum_{f,m\geq1}\frac{|\alpha_{N,r}(f,m)|}{fm}
 f^{2-\xi}m^{1/2-\xi}\ll_\xi1.
\end{equation}
This implies \eqref{eq:signed-majorant}.

Rankin's argument applies to the complement of a rectangle:
\begin{align}
 &\sum_{\substack{f,m\geq1\\f>D\ {\rm or}\ m>T}}
 \frac{|\alpha_{N,r}(f,m)|}{fm}\notag\\
 &\quad\leq
 \left(D^{-2+\xi}+T^{-1/2+\xi}\right)
 \sum_{f,m\geq1}\frac{|\alpha_{N,r}(f,m)|}{fm}
 f^{2-\xi}m^{1/2-\xi}\notag\\
 &\quad\ll_\xi
 r^\xi\left(D^{-2+\xi}+T^{-1/2+\xi}\right).  \label{eq:rectangular-rankin}
\end{align}
This proves \eqref{eq:signed-tails} uniformly in \(N,r\).

It remains to compare the prime-local and level-local averages.  We do
this before any infinite summation.  Let
\({\sf sf}_\ell(n)=\mathbf1_{\ell^2\nmid n}\), and let
\(\mathcal H_{\ell,j,t}(a,n)\) denote the unweighted local integrand
in \eqref{eq:lambda-local}.  For \(M\geq2j+t+3\), set
\begin{equation}\label{eq:finite-completed-coefficient}
 c_{\ell,r}(j,t)
 =\frac{\ell^{-j-t}}{\ell^M\varphi(\ell^M)}
 \sum_{n\bmod\ell^M}{\sf sf}_\ell(n)
 \sum_{a\bmod\ell^M}^{*}\mathcal H_{\ell,j,t}(a,n).
\end{equation}
Reversing these two finite sums and putting \(n=au\) gives
\begin{align}
 &\frac1{\ell^M\varphi(\ell^M)}
 \sum_{a\bmod\ell^M}^{*}\sum_{n\bmod\ell^M}
 {\sf sf}_\ell(n)\mathcal H_{\ell,j,t}(a,n)\notag\\
 &\qquad=\frac1{\ell^M}
 \sum_{u\bmod\ell^M}{\sf sf}_\ell(u)
 \mathcal H_{\ell,j,t}(1,u).                 \label{eq:finite-local-fubini}
\end{align}
Indeed,
\begin{equation}\label{eq:local-square-change}
                    \Delta_{a,r}(au)=a^2\Delta_{1,r}(u).
\end{equation}
The multiplier \(a^2\) is a unit square; at two it is also congruent
to one modulo eight.  It therefore preserves the discriminant
condition and every local symbol.  Thus
\eqref{eq:finite-local-fubini} is an identity of finite sums.

Let \(p\) be an odd prime and suppose \(D,T,r<p\).  For every
\(\ell\ne p\), the substitution \(N=pu\) and
\(\Delta_{p,r}(pu)=p^2\Delta_{1,r}(u)\) show that the fixed-\(p\)
level coefficient equals \eqref{eq:finite-completed-coefficient}.  At
\(\ell=p\), the inequalities \(f,m<p\) force \(j=t=0\); the remaining
factor is the squarefree mass \(1-p^{-2}\).

Introduce the \(p\)-depleted series
\begin{equation}\label{eq:p-depleted-series}
 \Sigma_{p,r}^{(p)}=
 \sum_{\substack{f,m\geq1\\(fm,p)=1}}\frac1{fm}
 \mathfrak m_{8f^2m}(\widetilde h_{p,r,f,m}).
\end{equation}
The local estimates above, combined with
\eqref{eq:squarefree-local-masses}, give
\begin{equation}\label{eq:level-rankin-majorant}
 \sum_{\substack{f,m\geq1\\(fm,p)=1}}
 \frac{|\mathfrak m_{8f^2m}(\widetilde h_{p,r,f,m})|}{fm}
 f^{2-\xi}m^{1/2-\xi}\ll_\xi r^\xi,
\end{equation}
uniformly in \(p\).  Hence
\begin{align}
 &\left|\sum_{f\leq D}\frac1f\sum_{m\leq T}\frac1m
 \mathfrak m_{8f^2m}(\widetilde h_{p,r,f,m})
 -\Sigma_{p,r}^{(p)}\right|\notag\\
 &\qquad\ll_\xi r^\xi
 \left(D^{-2+\xi}+T^{-1/2+\xi}\right).       \label{eq:fixed-level-rectangle}
\end{align}

For \(r\geq1\), the assumption \(r<p\) gives \(p\nmid r\).  Thus
\begin{equation}\label{eq:p-depleted-product}
 \Sigma_{p,r}^{(p)}=(1-p^{-2})\prod_{\ell\ne p}C_{\ell,r}.
\end{equation}
The generic completed factor satisfies
\begin{equation}\label{eq:p-factor-difference}
 C_{p,r}=\frac{p^4-2p^2-p+1}{p^2(p^2-1)},
 \qquad C_{p,r}-(1-p^{-2})=-\frac1{p(p^2-1)}.
\end{equation}
The product over \(\ell\ne p\) is uniformly bounded, so
\[
 \Sigma_{p,r}^{(p)}
 =\frac{\mathfrak B\nu(r)}{\zeta(2)}+O(p^{-3}).
\]
Together with \eqref{eq:fixed-level-rectangle}, this proves
\eqref{eq:fixed-zero-series}.

For \(r=0\), the odd local table is its ramified part.  Thus
\begin{equation}\label{eq:r0-odd-factor}
 C_{\ell,0}=(1-\ell^{-2})
 \left(1+\frac{\ell}{(\ell+1)^2(\ell-1)}\right).
\end{equation}
At two, \eqref{eq:two-adic-unit-sums} gives \(C_{2,0}=11/12\), also
the value of \eqref{eq:r0-odd-factor} at \(\ell=2\).  Hence
\[
                         \prod_\ell C_{\ell,0}
                         =\frac{\mathfrak A}{\zeta(2)}.
\]
The estimate \eqref{eq:ramified-absolute-factor} is summable over all
odd primes, and proves \eqref{eq:r0-majorant}.  Finally,
\[
 C_{p,0}=1-\frac1{p^2(p+1)},\qquad
 C_{p,0}-(1-p^{-2})=\frac1{p(p+1)}.
\]
The rectangular estimate now has no \(r^\xi\) factor.  Multiplication
by \(\omega_0=1/2\) proves \eqref{eq:r0-fixed-zero}.
\end{proof}

For \(r\geq0\), put
\[
 g_r(N)=\sum_{f,m\geq1}\frac{\alpha_{N,r}(f,m)}{fm}.
\]

\begin{proposition}[Squarefree level average]
\label{prop:squarefree-level-average}
For every \(\xi>0\), uniformly for \(r\geq1\), if \(G\) is supported in
\([cX,CX]\), then
\begin{align}
 \sum_{N\geq1}^{\mathrm{sf}}g_r(N)G(N)
 ={}&\frac{\mathfrak B\nu(r)}{\zeta(2)}\int_0^\infty G(t)\,dt \notag\\
 &+O_{\xi,c,C}\left(
 X^{1/2+\xi}r^\xi\mathcal B(G)\right),        \label{eq:squarefree-g-average}
\end{align}
where
\[
 \mathcal B(G)=\|G\|_\infty+\int_{\mathbb R}|G'(t)|\,dt.
\]
For the half-weight channel \(r=0\),
\begin{align}
 \sum_{N\geq1}^{\mathrm{sf}}\omega_0g_0(N)G(N)
 &=\frac{\mathfrak A}{2\zeta(2)}\int_0^\infty G(t)\,dt
 +O_{\xi,c,C}\!\left(X^{1/2+\xi}\mathcal B(G)\right).
                                                        \label{eq:r0-average}
\end{align}
\end{proposition}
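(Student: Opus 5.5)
The plan is to turn $g_r(N)$ into an explicit multiplicative function of the squarefree level, so that the average reduces to a standard convolution argument for a function close to $\mu^2$.

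\emph{Factorization of $g_r$.} By Lemma~\ref{lem:local-stabilization}, $\alpha_{N,r}(f,m)/(fm)$ factors as $\prod_\ell a_{\ell,N,r}(v_\ell(f),v_\ell(m))$. By the strong majorant \eqref{eq:strong-signed-majorant}, the double series defining $g_r(N)$ converges absolutely. Hence, for squarefree $N$, the series can be summed prime by prime:
\[
 g_r(N)=\prod_{\ell\nmid N}A_{\ell,r}^{(0)}\prod_{\ell\mid N}A_{\ell,r}^{(1)}
 =c_r\,h_r(N),\qquad c_r=\prod_\ell A_{\ell,r}^{(0)},
\]
with $h_r$ multiplicative and $h_r(\ell)=1/A_{\ell,r}^{(0)}$, since $A^{(1)}_{\ell,r}=1$. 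From \eqref{eq:a-unit-N}--\eqref{eq:a-unit-r} and \eqref{eq:two-adic-unit-sums}, the values of $h_r(\ell)$ are:
\begin{itemize}
\item $h_r(\ell)=1+O(\ell^{-3})$ for odd $\ell\nmid r$;
\item $h_r(\ell)=1-\ell^{-2}$ for odd $\ell\mid r$;
\item $h_r(2)\in\{3/2,3/4\}$.
\end{itemize}
Both $c_r$ and the $h_r(\ell)$ are bounded uniformly in $r$, because $\prod_{\ell\mid r}(1-\ell^{-2})^{-1}\ll1$. For $r=0$ every odd prime falls in the ramified case, and the same discussion applies.

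\emph{Dirichlet series and convolution.} Write $\mu^2h_r=\mathbf 1*\kappa_r$. The Dirichlet series of $\kappa_r$ is
\[
 \sum_N\frac{\mu^2(N)h_r(N)}{N^s}\,\zeta(s)^{-1}=\frac{E_r(s)}{\zeta(2s)},
\]
where $E_r(s)=\prod_\ell\bigl(1+h_r(\ell)\ell^{-s}\bigr)(1+\ell^{-s})^{-1}$. The local factor of $E_r$ is $1+O\bigl(|h_r(\ell)-1|\ell^{-\sigma}+\ell^{-2\sigma}\bigr)$, so $E_r(s)$ converges absolutely for $\Re s>1/2$. The only non-uniformity comes from $\ell\mid r$, where $h_r(\ell)-1=-\ell^{-2}$; this is actually harmless. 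To be safe, though, I will bound $\sum_d|\kappa_r(d)|d^{-1/2-\xi}\ll_\xi r^\xi$, using at most a divisor-type factor from the primes dividing $2r$. Then
\[
 \sum_N^{\mathrm{sf}}h_r(N)G(N)=\sum_d\kappa_r(d)\sum_{m}G(dm).
\]
By Euler--Maclaurin, $\sum_mG(dm)=d^{-1}\int G+O(\mathcal B(G))$, and the inner sum vanishes unless $d\leq CX$. Rankin's trick then bounds the accumulated error by
\[
 \mathcal B(G)\sum_{d\leq CX}|\kappa_r(d)|\ll X^{1/2+\xi}r^\xi\mathcal B(G).
\]
Completing $\sum_{d\le CX}\kappa_r(d)/d$ to $E_r(1)/\zeta(2)$ costs $\|G\|_\infty X\cdot X^{-1/2+\xi}r^\xi$, since $\int G\ll X\|G\|_\infty$.

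\emph{Identification of the constant.} The main term is $c_rE_r(1)\zeta(2)^{-1}\int G$. Prime by prime,
\[
 A^{(0)}_{\ell,r}\,\frac{1+h_r(\ell)\ell^{-1}}{1+\ell^{-1}}\,(1-\ell^{-2})
 =(1-\ell^{-1})A^{(0)}_{\ell,r}+(\ell^{-1}-\ell^{-2})\cdot1=C_{\ell,r}.
\]
So the constant is $\prod_\ell C_{\ell,r}$, which equals $\mathfrak B\nu(r)/\zeta(2)$ by \eqref{eq:completed-product} for $r\geq1$. For $r=0$ it equals $\mathfrak A/\zeta(2)$ by the computation after \eqref{eq:r0-odd-factor}, and multiplying by $\omega_0=1/2$ gives \eqref{eq:r0-average}.

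The main obstacle is the first step: passing from the $(f,m)$-series to the Euler product requires the absolute convergence \eqref{eq:strong-signed-majorant} uniformly in $N$ and $r$, together with care at $\ell=2$ and at primes dividing $r$. The remaining steps are routine convolution bookkeeping.
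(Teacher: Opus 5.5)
Your proposal is correct and follows essentially the paper's own argument. The paper likewise factors $g_r(N)$ as a constant times $\prod_{\ell\mid N}(A^{(0)}_{\ell,r})^{-1}$, writes $\sum_N\mu^2(N)g_r(N)N^{-s}$ as $\zeta(s)$ times a series absolutely convergent for $\Re s>1/2$ (your $c_rE_r(s)/\zeta(2s)$), and concludes by Euler summation, the Rankin-type coefficient bound, and the same prime-by-prime identification of the constant at $s=1$ with $C_{\ell,r}$. As you suspected, the coefficient bound is uniform in $r$ (the paper proves $\ll_\xi 1$), so your factor $r^\xi$ is only a harmless weakening.
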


\begin{proof}
Absolute signed completion permits the local factors to be
multiplied.  If \(v_\ell(N)=0\), the local factor is \(a_{\ell,r}\)
from \eqref{eq:a-unit-N} or \eqref{eq:a-unit-r}; if
\(v_\ell(N)=1\), it is one.  The same formulas hold at \(\ell=2\):
they give \(2/3\) for odd \(r\) and \(4/3\) for even \(r\).
Consequently, for \(r\geq1\),
\begin{equation}\label{eq:gr-local-product}
 g_r(N)=C_r\prod_{\ell\mid N}a_{\ell,r}^{-1},
 \qquad C_r=\prod_\ell a_{\ell,r}.
\end{equation}
For \(\ell\nmid r\), one has
\(a_{\ell,r}=1+O(\ell^{-3})\), whereas
\(a_{\ell,r}=1+O(\ell^{-2})\) when \(\ell\mid r\).  Thus \(C_r\)
and \(C_r^{-1}\) are bounded uniformly in \(r\).

Put
\[
 \delta_{\ell,r}=a_{\ell,r}^{-1}-1
 =\begin{cases}
 \displaystyle\frac1{\ell(\ell^2-\ell-1)},&\ell\nmid r,\\[2mm]
 -\ell^{-2},&\ell\mid r.
 \end{cases}
\]
The Dirichlet series of \eqref{eq:gr-local-product} is
\begin{equation}\label{eq:gr-Dirichlet}
 \sum_{N\geq1}\frac{\mu^2(N)g_r(N)}{N^s}
 =\zeta(s)\mathcal B_r(s),
\end{equation}
where the Euler factors are explicitly
\begin{align}
 \mathcal B_r(s)
 &=C_r\prod_\ell
 (1-\ell^{-s})(1+a_{\ell,r}^{-1}\ell^{-s}) \notag\\
 &=C_r\prod_\ell
 \left\{1+\delta_{\ell,r}\ell^{-s}
 -(1+\delta_{\ell,r})\ell^{-2s}\right\}.       \label{eq:Br-explicit}
\end{align}
This product is absolutely convergent for \(\Re s>1/2\).  Indeed,
if \(\sigma=1/2+\xi\), then
\begin{align}
 \sum_{d\geq1}\frac{|b_r(d)|}{d^\sigma}
 &\leq |C_r|\prod_\ell
 \left(1+|\delta_{\ell,r}|\ell^{-\sigma}
 +|1+\delta_{\ell,r}|\ell^{-2\sigma}\right)\notag\\
 &\ll_\xi1,                                      \label{eq:br-l1}
\end{align}
where
\[
 \mathcal B_r(s)=\sum_{d\geq1}\frac{b_r(d)}{d^s}.
\]
Here we used \(|\delta_{\ell,r}|\ll\ell^{-2}\) uniformly in \(r\),
and \(\sum_\ell\ell^{-1-2\xi}<\infty\).  In particular,
\eqref{eq:br-l1} is stronger than the asserted \(O_\xi(r^\xi)\)
bound.

At \(s=1\), the local factor including \(a_{\ell,r}\) is
\begin{align*}
 \kappa_{\ell,r}
 &=a_{\ell,r}(1-\ell^{-1})
   (1+a_{\ell,r}^{-1}\ell^{-1})\\
 &=\begin{cases}
 \displaystyle
 \frac{\ell^4-2\ell^2-\ell+1}
      {\ell^2(\ell^2-1)},&\ell\nmid r,\\[3mm]
 \displaystyle
 \frac{\ell^4-\ell^2-\ell+1}
      {\ell^2(\ell^2-1)},&\ell\mid r.
 \end{cases}
\end{align*}
The quotient of the second value by the first is
\[
 1+\frac{\ell^2}{\ell^4-2\ell^2-\ell+1}.
\]
It follows directly from \eqref{eq:B-constant} and the definition of
\(\nu(r)\) that
\begin{equation}\label{eq:Br-at-one}
                  \mathcal B_r(1)
                  =\frac{\mathfrak B\nu(r)}{\zeta(2)}.
\end{equation}

Comparing coefficients in \eqref{eq:gr-Dirichlet} gives
\[
 \sum_{N\geq1}^{\mathrm{sf}}g_r(N)G(N)
 =\sum_{d\leq CX}b_r(d)\sum_{n\geq1}G(dn).
\]
Euler summation for a function of bounded variation yields
\begin{equation}\label{eq:euler-summation-G}
 \sum_{n\geq1}G(dn)
 =\frac1d\int_0^\infty G(t)\,dt+O(\mathcal B(G)).
\end{equation}
Assume first \(0<\xi<1/2\), and write \(\sigma=1/2+\xi\).  By
\eqref{eq:br-l1},
\[
 \sum_{d\leq CX}|b_r(d)|\ll_\xi X^\sigma,
 \qquad
 \sum_{d>CX}\frac{|b_r(d)|}{d}
 \ll_\xi X^{\sigma-1}.
\]
Together with \(\int|G|\ll X\lVert G\rVert_\infty\),
\eqref{eq:euler-summation-G} and \eqref{eq:Br-at-one} prove
\eqref{eq:squarefree-g-average}.  If \(\xi\geq1/2\), apply the
preceding argument with any fixed \(0<\xi'<1/2\) and weaken the
resulting error term.

For \(r=0\), one has at every prime
\[
 a_{\ell,0}=(1-\ell^{-2})^{-1},\qquad
 C_0=\zeta(2),\qquad \delta_{\ell,0}=-\ell^{-2}.
\]
The same coefficient estimate and convolution argument apply, while
the factor at \(s=1\) is
\[
 (1-\ell^{-2})
 \left(1+\frac{\ell}{(\ell+1)^2(\ell-1)}\right).
\]
Their product is \(\mathfrak A/\zeta(2)\).  Multiplication by
\(\omega_0=1/2\) proves \eqref{eq:r0-average}.
\end{proof}

\section{Prime averaging}\label{sec:prime-first}

In this section \eqref{eq:h-prime} is regarded as a function of the
prime.  Put
\begin{equation}\label{eq:qfm}
                         q_{f,m}=8f^2m.
\end{equation}

\begin{lemma}[Support in the prime variable]\label{lem:prime-class-support}
Let $N$ be squarefree, $r\geq0$, and write
$f=2^jf_{\mathrm o}$ and $m=2^tm_{\mathrm o}$ with $f_{\mathrm o}$ and
$m_{\mathrm o}$ odd.  On the reduced classes modulo $q_{f,m}$, the
function $h_{N,r,f,m}$ is supported on at most $16m$ classes.  If the
support is nonempty, then $(f_{\mathrm o},Nr)=1$, and every supported
class satisfies
\begin{equation}\label{eq:prime-class-congruence}
                  4a\equiv r^2N\pmod{f_{\mathrm o}^2}.
\end{equation}
\end{lemma}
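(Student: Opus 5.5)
The plan is to factor the count of supported reduced classes through the Chinese remainder theorem, exactly as in the proof of Lemma~\ref{lem:local-stabilization}. Write $q_{f,m}=8f^2m$ and use CRT to identify the reduced classes modulo $q_{f,m}$ with tuples of unit classes modulo the prime-power components $\ell^{v_\ell(q_{f,m})}$. The indicator of $f^2\mid\Delta_{a,r}(N)$, the discriminant condition, and the Kronecker symbol each depend on $a$ prime by prime. The odd square factors coming from other primes are unit squares, and at $2$ they are $\equiv1\pmod 8$. So $h_{N,r,f,m}(a)\neq0$ forces every local component to lie in a locally supported set. It therefore suffices to bound the number of supported unit classes modulo $\ell^{2j_\ell+t_\ell}$ for odd $\ell$ (with $j_\ell=v_\ell(f)$, $t_\ell=v_\ell(m)$), and modulo $2^{2j+t+3}$ at $\ell=2$, and then multiply these local counts.

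\emph{Odd primes $\ell\mid f_{\mathrm o}$.} Here $\ell^{2j_\ell}\mid\Delta_{a,r}(N)=-N(4a-r^2N)$ with $j_\ell\geq1$.
\begin{itemize}
\item If $\ell\mid N$, then as in the proof of Theorem~\ref{thm:exact-fricke-trace} (using squarefreeness and $\ell\nmid a$) we get $v_\ell(\Delta)=1$. This is a contradiction, so $(f_{\mathrm o},N)=1$.
\item If $\ell\mid r$ and $\ell\nmid N$, then $4a-r^2N$ is a unit, again a contradiction. Hence $(f_{\mathrm o},Nr)=1$.
\item Since $N$ is a unit, the condition becomes $4a\equiv r^2N\pmod{\ell^{2j_\ell}}$. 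Combined by CRT this gives \eqref{eq:prime-class-congruence}.
\end{itemize}
This fixes $a$ modulo $\ell^{2j_\ell}$. The remaining freedom modulo $\ell^{2j_\ell+t_\ell}$ is at most $\ell^{t_\ell}$ lifts, so the local count is $\leq\ell^{t_\ell}$.

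\emph{Odd primes $\ell\nmid f$.} The count is trivially at most $\varphi(\ell^{t_\ell})\leq\ell^{t_\ell}$.

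\emph{The prime $2$.} This is the step needing care. I bound the number of admissible unit classes modulo $2^{2j+t+3}$ crudely by $16\cdot2^{t}$. By Lemma~\ref{lem:local-stabilization}, the integrand depends only on $a$ modulo $2^{2j+3}$ in the divisibility and discriminant conditions, plus the Kronecker factor. I will use the two-adic case analysis behind table~\eqref{eq:two-adic-local-table}:
\begin{itemize}
\item When $N$ is odd and $v_2(r)=1$, write $w=v_2(c-a)$ as in \eqref{eq:two-adic-c}. Support forces $w\in\{2j-2,\,2j\text{ or more}\}$, so $a$ is pinned modulo roughly $2^{2j-2}$.
\item In the remaining cases $j\leq1$.
\end{itemize}
In every case the admissible $a$ lie in at most a bounded number of classes modulo $2^{\max(2j-2,0)}$. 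The number of lifts to modulo $2^{2j+t+3}$ is then at most $2^{t+5}$ divided by the count of excluded ones. A careful count should give at most $16\cdot 2^{t}$; if the bookkeeping only yields a larger absolute constant, the statement's $16$ would need that constant instead, which does not affect later use.

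Multiplying the local counts gives at most $16\cdot2^t\prod_{\ell\text{ odd}}\ell^{t_\ell}=16m$. I expect the main obstacle to be exactly this $2$-adic count with the sharp constant $16$, and I would verify it by checking the three blocks of \eqref{eq:two-adic-local-table} directly.
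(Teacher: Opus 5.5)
\textbf{Verdict: same approach as the paper, but the two-adic count is left unproved, so the constant $16$ is not established.}

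Your route matches the paper's: the CRT factorization, then the same odd-prime argument, then a two-adic case split. In the odd-prime step, $\ell\mid N$ gives $v_\ell(\Delta)=1$ and $\ell\mid r$ gives a unit, hence $(f_{\mathrm o},Nr)=1$. Then $a$ lies in a unique unit class modulo $f_{\mathrm o}^2$, with at most $m_{\mathrm o}$ lifts. That part is complete.

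The gap is the two-adic count. It is the only place the constant $16$ comes from, and you do not carry it out. The phrase ``at most $2^{t+5}$ divided by the count of excluded ones'' is not a bound. You also explicitly allow the constant to come out larger, so the lemma as stated (at most $16m$ classes) is not proved. It is true that later use only needs $O(m)$. Your pinning alone does not give $16$: fixing $a$ modulo $2^{2j-2}$ and lifting naively to modulo $2^{2j+t+3}$ gives $2^{t+5}=32\cdot2^t$ classes, a factor $2$ too many.

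Here is how to close it.
\begin{itemize}
\item If $j\le1$, the bound is trivial. There are only $2^{2j+t+2}\le2^{t+4}=16\cdot2^t$ odd classes modulo $2^{2j+t+3}$.
\item If $j\ge2$, then necessarily $N$ is odd and $r=2r_0$ with $r_0$ odd. Indeed, $2\mid N$, or $N,r$ both odd, admits no $j\ge1$, and $4\mid r$ gives $v_2(\Delta_{a,r}(N))=2$.
\item In that case put $c=r_0^2N$, so $\Delta_{a,r}(N)=4N(c-a)$. Divisibility by $2^{2j}$ forces $a\equiv c\pmod{2^{2j-2}}$, and $a$ is odd because $c$ is.
\item For the four lifts $a\equiv c+k2^{2j-2}\pmod{2^{2j}}$, $0\le k<4$, one has $\Delta_{a,r}(N)/2^{2j}\equiv-Nk\pmod4$.
\item Hence $k=2$ is inadmissible (quotient $\equiv2$), $k=0$ is admissible, and exactly one of $k=1,3$ gives $-Nk\equiv1\pmod4$.
\item So only two classes modulo $2^{2j}$ survive, each with $2^{t+3}$ lifts modulo $2^{2j+t+3}$. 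This gives $16\cdot2^t$.
\end{itemize}
This is exactly the paper's count. The paper phrases it as at most $16$ admissible classes modulo $2^{2j+3}$, followed by a lifting factor $2^t$. Multiplying by your odd-prime count $m_{\mathrm o}$ gives the stated bound $16m$.
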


\begin{proof}
Let $\ell^e\Vert f_{\mathrm o}$.  If $\ell\mid N$, then
\[
 v_\ell(\Delta_{a,r}(N))=1
\]
for every unit $a$ modulo $\ell$; hence
$\ell^{2e}\nmid\Delta_{a,r}(N)$.  If $\ell\mid r$ and
$\ell\nmid N$, then $\Delta_{a,r}(N)$ is a unit modulo $\ell$.
Thus nonempty support forces $(f_{\mathrm o},Nr)=1$.  The condition
$f_{\mathrm o}^2\mid\Delta_{a,r}(N)$ then fixes the unique unit class
\eqref{eq:prime-class-congruence} modulo $f_{\mathrm o}^2$.  Its lifts to
the odd part of $f^2m$ form at most $m_{\mathrm o}$ classes, also when
$f$ and $m$ have common prime divisors.

It remains to count the two-adic classes.  The first two factors in
\eqref{eq:h-prime} depend only on $a$ modulo $2^{2j+3}$.  If $j=0$,
there are at most four reduced classes modulo eight.  Suppose $j\geq1$.
If $N$ is even, or if $N$ and $r$ are both odd, there is no admissible
class.  Let $N$ be odd and write $r=2r_0$.  If $r_0$ is even, then
$v_2(\Delta_{a,r}(N))=2$ for odd $a$, so only $j=1$ can occur and the
discriminant condition selects at most one class modulo four.  If $r_0$
is odd, divisibility fixes $a$ modulo $2^{2j-2}$; among the four lifts
modulo $2^{2j}$, at most two make the quotient congruent to $0$ or $1$
modulo four.  In all cases there are at most $16$ admissible classes
modulo $2^{2j+3}$.  Lifting to the full two-part
$2^{2j+t+3}$ multiplies this number by at most $2^t$.  The Chinese
remainder theorem now gives at most $16\cdot2^tm_{\mathrm o}=16m$
reduced classes modulo $q_{f,m}$.
\end{proof}

For a smooth function \(\psi\) supported in a fixed compact subinterval
of \((0,\infty)\), put
\begin{equation}\label{eq:Epsi}
 E_\psi(P;q,a)=
 \sum_{\substack{n\geq1\\n\equiv a\ (q)}}\Lambda(n)\psi(n/P)
 -\frac P{\varphi(q)}\int_0^\infty\psi(u)\,du,
 \qquad (a,q)=1.
\end{equation}

\begin{lemma}[Smooth Barban--Davenport--Halberstam]
\label{lem:smooth-BDH}
Let \(I\Subset(0,\infty)\), and let \(\mathscr W\) be a family of
\(C^1\)-functions supported in \(I\) such that
\[
 \sup_{\psi\in\mathscr W}
 \left(\lVert\psi\rVert_\infty+\lVert\psi'\rVert_{L^1}\right)<\infty.
\]
For \(1\leq Q\leq P\) and every \(A>0\),
\begin{equation}\label{eq:smooth-BDH}
 \sup_{\psi\in\mathscr W}
 \sum_{q\leq Q}\sum_{a\bmod q}^{*}
 |E_\psi(P;q,a)|^2
 \ll_{A,\mathscr W}PQ\log P+P^2(\log P)^{-A}.
\end{equation}
\end{lemma}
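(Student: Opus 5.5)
The plan is to reduce the smooth estimate to the classical (sharp) Barban--Davenport--Halberstam theorem by partial summation in the prime variable, keeping the implied constant uniform over the family \(\mathscr W\).

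Let \(I=[c,C]\). For \(y>0\) write
\[
 \Delta(y;q,a)=\sum_{\substack{n\leq y\\n\equiv a\ (q)}}\Lambda(n)-\frac{y}{\varphi(q)}.
\]
Since \(\psi(n/P)\) vanishes unless \(cP\leq n\leq CP\), Stieltjes integration against \(d\psi(y/P)\) gives
\[
 E_\psi(P;q,a)=-\int_{cP}^{CP}\Delta(y;q,a)\,\frac1P\psi'(y/P)\,dy .
\]
Indeed, the main term \(\frac{P}{\varphi(q)}\int\psi\) is exactly what results when \(\Delta\) is replaced by its linear part \(y/\varphi(q)\), and the boundary terms vanish because \(\psi\) is compactly supported in \(I\).

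Put \(w(y)=P^{-1}|\psi'(y/P)|\), so that \(\int w\,dy=\lVert\psi'\rVert_{L^1}\). Cauchy--Schwarz with respect to the measure \(w(y)\,dy\) then gives
\[
 |E_\psi(P;q,a)|^2\leq \lVert\psi'\rVert_{L^1}\int_{cP}^{CP}|\Delta(y;q,a)|^2\,w(y)\,dy .
\]
Next, sum over \(q\leq Q\) and reduced \(a\), and interchange the sum with the integral. For each \(y\in[cP,CP]\) the classical Barban--Davenport--Halberstam theorem, in the form valid for \(Q\leq y\) (Gallagher/Montgomery), gives
\[
 \sum_{q\leq Q}\sum_{a}^{*}|\Delta(y;q,a)|^2\ll_A yQ\log y+y^2(\log y)^{-A}.
\]
This needs \(Q\leq y\). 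If \(Q\leq P\) but \(Q>cP\), we replace \(Q\) by \(\min(Q,y)\) and bound the remaining moduli \(y<q\leq Q\) trivially: for such \(q\) each class contains at most one term \(n\leq y\), so
\[
 \sum_{y<q\leq Q}\sum_a^*|\Delta|^2\ll \sum_{q\leq Q}\Bigl(\sum_{n\leq y,\,(n,q)=1}\Lambda(n)^2+\frac{y^2}{\varphi(q)}\Bigr)\ll PQ\log P+P^2\log\log P .
\]
This is again \(\ll PQ\log P\) because \(Q\asymp P\) in this case.

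Integrating against \(w(y)\,dy\) and using \(y\asymp P\) yields
\[
 \sum_{q\leq Q}\sum_a^*|E_\psi(P;q,a)|^2\ll_A \lVert\psi'\rVert_{L^1}^2\bigl(PQ\log P+P^2(\log P)^{-A}\bigr),
\]
with implied constant depending only on \(A\) and \(I\). Taking the supremum over \(\mathscr W\) uses only \(\sup\lVert\psi'\rVert_{L^1}<\infty\), which proves \eqref{eq:smooth-BDH}. The bound \(\sup\lVert\psi\rVert_\infty<\infty\) is not needed beyond the fact that \(\psi\) vanishes at the endpoints of \(I\).

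The only substantive input is the classical BDH theorem, which rests on Siegel--Walfisz (hence the ineffective dependence on \(A\)) and the large sieve. The step needing care is uniformity: the classical bound must hold uniformly for \(y\) in the range \([cP,CP]\), and the partial-summation identity must be checked for \(C^1\) rather than smooth \(\psi\). Both are routine, since \(\psi\) is absolutely continuous with \(\psi'\in L^1\).
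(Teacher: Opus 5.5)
Your proposal is correct and follows the paper's proof essentially step for step. Both use the same partial-summation identity $E_\psi=-\int\psi'\,E$, the same Cauchy--Schwarz against the weight $|\psi'|$, the classical BDH bound applied uniformly for $y\asymp P$, and the same trivial treatment of the moduli $y<q\le Q$. In that trivial bound your middle display should run over $y<q\le Q$ rather than $q\le Q$, but the conclusion is unaffected since $Q\asymp P$ there. The only real difference is that you quote the all-$Q$ form $\ll yQ\log y+y^2(\log y)^{-A}$ directly, whereas the paper derives it from Hooley's restricted-range version ($x(\log x)^{-C_0}\le Q\le x$) by positivity with $C_0=A+2$.
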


\begin{proof}
Write
\[
 E(x;q,a)=\sum_{\substack{n\leq x\\n\equiv a\ (q)}}\Lambda(n)
          -\frac{x}{\varphi(q)}.
\]
For each fixed \(C_0>0\), Hooley's form of the
Barban--Davenport--Halberstam theorem
\cite[Theorem~B]{HooleyBDH} gives
\begin{equation}\label{eq:unsmoothed-BDH}
 \sum_{q\leq Q}\sum_{a\bmod q}^{*}|E(x;q,a)|^2
 \ll_{C_0} xQ\log x
 \qquad\left(\frac{x}{(\log x)^{C_0}}\leq Q\leq x\right).
\end{equation}
Given \(A>0\), take \(C_0=A+2\) and put
\(Q_0=x(\log x)^{-C_0}\).  If \(Q<Q_0\), positivity and
\eqref{eq:unsmoothed-BDH} at \(Q_0\) give
\[
 \sum_{q\leq Q}\sum_{a\bmod q}^{*}|E(x;q,a)|^2
 \ll xQ_0\log x\ll x^2(\log x)^{-A}.
\]
It follows that, for every \(1\leq Q\leq x\),
\begin{equation}\label{eq:unsmoothed-BDH-all-Q}
 \sum_{q\leq Q}\sum_{a\bmod q}^{*}|E(x;q,a)|^2
 \ll_A xQ\log x+x^2(\log x)^{-A}.
\end{equation}
The same bound with \(x\asymp P\) holds for \(Q\leq P\).  When
\(Q>x\), split at \(q=x\); for \(x<q\leq Q\), each progression
contains at most one integer up to \(x\), and
\[
 \sum_{a\bmod q}^{*}|E(x;q,a)|^2
 \ll \sum_{n\leq x}\Lambda(n)^2+\frac{x^2}{\varphi(q)}
 \ll x\log x+\frac{x^2}{\varphi(q)}.
\]
Summation over \(x<q\leq Q\) is \(O(xQ\log x)\), as required.

Choose \(0<c<C\) with \(I\subset(c,C)\).  Since the functions vanish
at the endpoints, partial summation gives
\[
 E_\psi(P;q,a)=-\int_c^C\psi'(u)E(Pu;q,a)\,du.
\]
Cauchy's inequality in the integral yields
\[
 |E_\psi(P;q,a)|^2
 \leq\lVert\psi'\rVert_{L^1}
 \int_c^C|\psi'(u)|\,|E(Pu;q,a)|^2\,du.
\]
Summing over \(q,a\) and applying \eqref{eq:unsmoothed-BDH-all-Q}
uniformly for \(u\in[c,C]\) proves \eqref{eq:smooth-BDH}.
\end{proof}

\begin{lemma}[Smooth prime number theorem]
\label{lem:smooth-PNT}
Let \(I\Subset(0,\infty)\).  For every \(B>0\), uniformly for
\(Y\geq2\) and \(\psi\in C_c^1(I)\),
\begin{align}
 \sum_p(\log p)\psi(p/Y)
 ={}&Y\int_0^\infty\psi(u)\,du \notag\\
 &+O_{B,I}\!\left(
 Y(\log Y)^{-B}
 \{\lVert\psi\rVert_\infty+\lVert\psi'\rVert_{L^1}\}\right).
                                                        \label{eq:smooth-PNT}
\end{align}
\end{lemma}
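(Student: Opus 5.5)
The plan is to reduce Lemma~\ref{lem:smooth-PNT} to the prime number theorem with the classical de la Vallée Poussin error term, by partial summation in the same way that Lemma~\ref{lem:smooth-BDH} was deduced from its unsmoothed form. Choose \(0<c<C\) with \(I\subset(c,C)\), and write
\[
 \theta(x)=\sum_{p\leq x}\log p,\qquad E(x)=\theta(x)-x .
\]
The prime number theorem gives \(E(x)\ll_B x(\log x)^{-B}\) for every \(B>0\) and \(x\geq2\). The only difference from the \(\Lambda\)-weighted sum is the contribution of prime powers, which is \(O(\sqrt x)\) and is absorbed either way.

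First, since \(\psi\) is \(C^1\) and vanishes at \(c\) and \(C\), Stieltjes integration by parts in the variable \(x=Yu\) gives
\[
 \sum_p(\log p)\psi(p/Y)=\int\psi(x/Y)\,d\theta(x)
 =-\int_c^C\psi'(u)\,\theta(Yu)\,du .
\]
Second, substitute \(\theta(Yu)=Yu+E(Yu)\). The term \(Yu\) gives
\[
 -Y\int_c^C u\,\psi'(u)\,du=Y\int_0^\infty\psi(u)\,du,
\]
again by integration by parts with vanishing boundary terms. The remaining term is bounded by
\[
 \lVert\psi'\rVert_{L^1}\sup_{u\in[c,C]}|E(Yu)|\ll_{B,I}Y(\log Y)^{-B}\lVert\psi'\rVert_{L^1},
\]
since \(\log(Yu)\asymp_I\log Y\) once \(Y\) is large in terms of \(c\).

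For bounded \(Y\), say \(2\leq Y\leq Y_0(I)\), both sides of the claimed identity are \(O_I(\lVert\psi\rVert_\infty)\), and \((\log Y)^{-B}\gg_{B,I}1\) in this range, so the estimate holds trivially. This is where the \(\lVert\psi\rVert_\infty\) term in \eqref{eq:smooth-PNT} is used. Alternatively, \(\lVert\psi\rVert_\infty\leq\lVert\psi'\rVert_{L^1}\) for compactly supported \(\psi\), so the stated form is in any case no stronger than the bound just obtained.

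There is no real obstacle here. The only points needing care are uniformity in \(\psi\), which holds because the constants depend only on \(c\), \(C\) and \(B\), and the small-\(Y\) bookkeeping just described.
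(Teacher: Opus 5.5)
Your proposal is correct and is the paper's argument: partial summation against $\theta(x)=x+O_B(x(\log x)^{-B})$, with the main term recovered by a second integration by parts. Your separate handling of bounded $Y$ and your remark that $\lVert\psi\rVert_\infty\leq\lVert\psi'\rVert_{L^1}$ are valid details that the paper's one-line proof leaves implicit.
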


\begin{proof}
This follows by partial summation from
\[
                    \sum_{p\leq x}\log p
                    =x+O_B(x(\log x)^{-B}).
\]
\end{proof}

With \(D,R,T\) as in \eqref{eq:truncation-parameters}, define
\begin{align}
 \mathscr P_{X,K}^{\mathrm{tr}}
 ={}&\sum_{0\leq r\leq R}\omega_r
 \sum_{f\leq D}\frac1f\sum_{m\leq T}\frac1m
 \sum_{N\geq1}^{\mathrm{sf}}W(N/X) \notag\\
 &\quad\times
 \sum_{n\geq1}\Lambda(n)\Psi_{X,K,N,r}(n/P)
 h_{N,r,f,m}(n),                              \label{eq:Ptr}\\
 \mathscr Q_{X,K}^{\mathrm{tr}}
 ={}&P\sum_{0\leq r\leq R}\omega_r
 \sum_{f\leq D}\frac1f\sum_{m\leq T}\frac1m
 \sum_{N\geq1}^{\mathrm{sf}}W(N/X) \notag\\
 &\quad\times
 \left(\int_0^\infty\Psi_{X,K,N,r}(u)\,du\right)
 \alpha_{N,r}(f,m).                            \label{eq:Qtr}
\end{align}

\begin{proposition}[Averaging over primes]
\label{prop:prime-first-replacement}
Fix \(\eps_0,A_0>0\).  Choose \(\eta,\vartheta>0\) sufficiently small
in terms of \(\eps_0\).  Uniformly for
\(X^{\eps_0}\leq K\leq X^{A_0}\), and for every \(B>0\),
\begin{equation}\label{eq:prime-first-replacement}
 \frac{|\mathscr P_{X,K}^{\mathrm{tr}}
       -\mathscr Q_{X,K}^{\mathrm{tr}}|}{XP}
 \ll
 D\left(\frac TP\right)^{1/2}(\log P)^C
 +(\log P)^{-B}.
\end{equation}
Here \(C>0\) is an absolute constant, independent of \(B,X\), and \(K\).
The first term on the right is a negative power of \(X\).
\end{proposition}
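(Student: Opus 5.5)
The plan is to decompose, for each fixed $(r,f,m,N)$, the prime sum in \eqref{eq:Ptr} into reduced residue classes modulo $q=q_{f,m}=8f^2m$. Since $h_{N,r,f,m}$ is $q$-periodic,
\[
 \sum_{n}\Lambda(n)\Psi_{X,K,N,r}(n/P)h_{N,r,f,m}(n)
 =\sum_{a\bmod q}^{*}h_{N,r,f,m}(a)\sum_{n\equiv a\,(q)}\Lambda(n)\Psi(n/P)
 +O\Big(\sum_{\ell\mid q}\log\ell\,\sum_{\ell^k\asymp P}1\Big).
\]
The error in this display collects the prime powers $n$ with $(n,q)>1$, and it is $O(\log q\cdot\log P)$, which is negligible. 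Inserting \eqref{eq:Epsi}, the main part is $P\int\Psi\cdot\varphi(q)^{-1}\sum^{*}_a h(a)=P\int\Psi\cdot\alpha_{N,r}(f,m)$, which is exactly the summand of \eqref{eq:Qtr}. The difference $\mathscr P^{\mathrm{tr}}-\mathscr Q^{\mathrm{tr}}$ is therefore a weighted sum of $E_{\Psi}(P;q,a)$ over supported classes $a$. I will also need to handle the prime powers $n$ that are not primes, but these only contribute $O(P^{1/2})$ per progression sum and are harmless after summation.

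Next I bound the difference with Cauchy--Schwarz, using Lemma~\ref{lem:prime-class-support} and Lemma~\ref{lem:smooth-BDH}. By Lemma~\ref{lem:prime-class-support}, for fixed $(N,r,f,m)$ at most $16m$ classes carry $|h|\le1$. Hence
\[
 \Big|\sum_a^* h(a)E_\Psi(P;q,a)\Big|\le (16m)^{1/2}\Big(\sum_a^*|E_\Psi(P;q,a)|^2\Big)^{1/2}.
\]
I then sum over $N\asymp X$, $r\le R$ (with the weight $(1+r)^{-A}$ from Corollary~\ref{cor:uniform-kernel-seminorms}), $f\le D$ and $m\le T$ with weights $1/(fm)$, and apply Cauchy--Schwarz again over $(f,m)$. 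The $\psi$-dependence on $N,r$ is uniform: by Corollary~\ref{cor:uniform-kernel-seminorms} the functions $\Psi_{X,K,N,r}$ form a family $\mathscr W$ with $\|\psi\|_\infty+\|\psi'\|_{L^1}\ll(1+r)^{-A}$ and fixed support, which is exactly the hypothesis of Lemma~\ref{lem:smooth-BDH}.

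The bookkeeping over $(f,m)$ is the main obstacle. The moduli $q=8f^2m$ are not injective in $(f,m)$, and the weights $m^{1/2}/(fm)$ must be balanced. My first attempt is, for each $N,r$ (with $\sup_\psi$ in the BDH bound),
\[
 \sum_{f\le D}\sum_{m\le T}\frac{m^{1/2}}{fm}\Big(\sum_a^*|E|^2\Big)^{1/2}
 \le\Big(\sum_{f,m}\frac{1}{f^2 m}\cdot \#\{\text{reps}\}\Big)^{1/2}\Big(\sum_{q\le 8D^2T}\sum_a^*|E(q,a)|^2\Big)^{1/2}.
\]
Each $q$ has $\ll\tau(q)$ representations, which I bound crudely by $(\log P)^{C}$ on average, or I simply sum over $f$ outside and for fixed $f$ note that $m\mapsto 8f^2m$ is injective. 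With $Q=8D^2T\le P$, Lemma~\ref{lem:smooth-BDH} gives $(PQ\log P+P^2(\log P)^{-A'})^{1/2}$. Multiplying by $(\log T)^{1/2}$ and the $f$-sum of $1$ (giving a factor $D$), then summing over $N\asymp X$ and over $r$ with rapid decay, yields
\[
 XP\Big(D\big(D^2T/P\big)^{1/2}(\log P)^{C}+D(\log P)^{-A'/2+1}\Big).
\]
To obtain the stated shape $D(T/P)^{1/2}$ rather than $D^2(T/P)^{1/2}$, I will instead apply BDH separately for each $f$, with $Q_f=8f^2T$, and weight by $1/f$. This gives $\sum_f f^{-1}(f^2T/P)^{1/2}\ll D(T/P)^{1/2}$, since the factor $m^{1/2}$ paired with $1/m$ leaves a $(\log T)^{1/2}$ factor.

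For the $(\log P)^{-A'}$ part I take $A'$ large in terms of $B$ and $\vartheta$, so that the factor $D=X^\vartheta$ costs only a power of $\log P$ relative to $(\log P)^{-B}$. This step looks delicate, since $D$ is a power of $X$. If it fails, I will shrink the $f$-range by first applying Proposition~\ref{prop:signed-local-completion} to truncate $f$ at a power of $\log P$ (the tail is $O(D'^{-2+\xi})$) and then run the argument with $D'=(\log P)^{C'}$. Finally, with $T=Y^{1/2+\eta}$ and $P=XK^2$ we have $T/P\le (XK)^{1/2+\eta}/(XK^2)$. For $\eta,\vartheta$ small relative to $\eps_0$ this makes $D(T/P)^{1/2}$ a negative power of $X$, and it also ensures $Q\le P$.
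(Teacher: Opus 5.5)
Your plan is the paper's argument. You split into reduced classes modulo $q_{f,m}=8f^2m$, use the $O(m)$-class support from Lemma~\ref{lem:prime-class-support} with Cauchy--Schwarz, and apply Lemma~\ref{lem:smooth-BDH} for each fixed $f$ to the injective family of moduli $m\mapsto 8f^2m$, with $Q=8f^2T\leq 8D^2T=o(P)$. The paper groups $m$ dyadically where you use Cauchy--Schwarz with weight $1/m$; this only changes the power of $\log P$.

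The one point to correct is your treatment of the $P^2(\log P)^{-A'}$ term. In your per-$f$ version it enters as $f^{-1}P(\log P)^{-A'/2}(\log T)^{1/2}$, which is the paper's $P/(f(\log P)^{A})$. Summing over $f\leq D$ therefore costs only $\log D$, and no factor $D$ ever appears. This matters, because both remedies you propose would fail:
\begin{itemize}
\item No choice of $A'$, however it depends on $\vartheta$, lets a power of $\log P$ absorb $D=X^\vartheta$.
\item Truncating at $D'=(\log P)^{C'}$ via Proposition~\ref{prop:signed-local-completion} controls only the $\alpha_{N,r}$-tail in $\mathscr Q^{\mathrm{tr}}$. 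The prime-side tail of $\mathscr P^{\mathrm{tr}}$ would still need its own bound, for example by counting $n\asymp P$ in the $O(1)$ classes modulo $f^2$ permitted by $f^2\mid\Delta_{n,r}(N)$.
\end{itemize}
With the per-$f$ bookkeeping neither remedy is needed, and your argument is complete.
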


\begin{proof}
Remove first every proper prime power from the \(\Lambda\)-sum.  Since
\[
 \sum_{\substack{n\asymp P\\n=\ell^j,\ j\geq2}}\Lambda(n)
 \ll P^{1/2}(\log P)^2,
\]
Corollary~\ref{cor:uniform-kernel-seminorms} and the elementary
\(f\)- and \(m\)-sums show that their total contribution to
\(\mathscr P_{X,K}^{\mathrm{tr}}\) is
\[
                         O\!\left(XP^{1/2}(\log P)^C\right).
\]
This is \(P^{-1/2}(\log P)^C\) relative to \(XP\).  Moreover
\(8D^2T=o(P)\), as verified below.  A prime in the support is therefore
larger than every prime factor of \(q_{f,m}\), so the remaining terms
lie in reduced residue classes.

For fixed \(N,r,f,m\), let \(\omega(a)\) be the value of
\(h_{N,r,f,m}\) on a reduced class and put
\[
 \mathcal R_{N,r,f,m}
 =\sum_{a\bmod q_{f,m}}^{*}\omega(a)
 E_{\Psi_{X,K,N,r}}(P;q_{f,m},a).
\]
By Lemma~\ref{lem:prime-class-support}, $\omega$ is bounded and
supported on $O(m)$ classes, uniformly in $N,r,f,m$.  Hence
\[
 |\mathcal R_{N,r,f,m}|^2
 \ll m\sum_{\substack{a\bmod q_{f,m}\\\omega(a)\ne0}}
 |E_{\Psi_{X,K,N,r}}(P;q_{f,m},a)|^2.
\]
For \(m\asymp M\), Cauchy's inequality gives
\begin{align*}
 \left(\frac1f\sum_{m\asymp M}
       \frac{|\mathcal R_{N,r,f,m}|}{m}\right)^2
 &\ll \frac1{f^2M}
       \sum_{m\asymp M}|\mathcal R_{N,r,f,m}|^2\\
 &\ll \frac1{f^2}
       \sum_{m\asymp M}
       \sum_{\substack{a\bmod 8f^2m\\ \omega(a)\ne0}}
       |E_{\Psi_{X,K,N,r}}(P;8f^2m,a)|^2.
\end{align*}
Enlarging these sparse moduli to all \(q\ll f^2M\) and applying
Lemma~\ref{lem:smooth-BDH}, with \(2A\) in place of \(A\), therefore
gives
\[
 \frac1f\sum_{m\asymp M}\frac{|\mathcal R_{N,r,f,m}|}{m}
 \ll (PM\log P)^{1/2}+\frac{P}{f(\log P)^A}.
\]
Sum the dyadic blocks, \(f\leq D\), and \(N\asymp X\).  The sum over
\(r\) is absolute by Corollary~\ref{cor:uniform-kernel-seminorms}.
This proves \eqref{eq:prime-first-replacement}.

Finally,
\begin{equation}\label{eq:prime-first-exponent}
 D\left(\frac TP\right)^{1/2}
 =X^{-1/4+\vartheta+\eta/2}
  K^{-3/4+\eta/2}.
\end{equation}
For \(\eta<3/2\), the right side decreases with \(K\), so the worst
case is \(K=X^{\eps_0}\).  Taking, for example,
\[
 \eta\leq\frac{\min(\eps_0,1)}{256},
 \qquad
 \vartheta\leq\frac{\min(\eps_0,1)}{512},
\]
gives a power saving.  The modulus condition follows from
\[
 \frac{D^2T}{P}
 =X^{-1/2+2\vartheta+\eta}K^{-3/2+\eta}.
\]
This expression also decreases with $K$ and is a negative power of $X$
for the displayed choices.  Thus $8D^2T=o(P)$, as required when applying
Lemma~\ref{lem:smooth-BDH}.
\end{proof}

Define the explicit pointwise main term
\begin{align}
 \mathcal M_{X,K}(p;F)
 ={}&\frac{\mathfrak c_\varphi}{12}
 \sum_{\substack{k\geq2\\k\ {\rm even}}}(k-1)V(k/K)
 \notag\\
 &\quad\times
 \int_0^\infty tW(t/X) \notag\\
 &\hspace{16mm}
 F\!\left(\frac{16\pi^2p}{t(k-1)^2}\right)
 \M_k(p/t)\,dt.                                  \label{eq:explicit-main}
\end{align}

\begin{proposition}[The zero-frequency integral]
\label{prop:prime-main-identification}
For every \(B>0\), uniformly in
\(X^{\eps_0}\leq K\leq X^{A_0}\),
\begin{align}
 \frac P\pi\mathscr Q_{X,K}^{\mathrm{tr}}
 ={}&\sum_p(\log p)\mathcal M_{X,K}(p;F) \notag\\
 &+O\left(XP^2\left\{
 X^{-\delta}+(\log X)^{-B}\right\}\right)          \label{eq:principal-identification}
\end{align}
for some \(\delta=\delta(\eps_0)>0\).
\end{proposition}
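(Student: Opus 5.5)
The plan is to evaluate $\mathscr Q_{X,K}^{\mathrm{tr}}$ in three stages: complete the truncated sums in $f$ and $m$, average the resulting multiplicative function over squarefree levels, and then recognize the outcome as the prime sum of $\mathcal M_{X,K}(p;F)$ by means of Zubrilina's identity.

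\emph{Completion in $f$ and $m$.} By \eqref{eq:signed-tails} of Proposition~\ref{prop:signed-local-completion}, removing the truncations $f\le D$ and $m\le T$ replaces the inner $(f,m)$-sum by $g_r(N)$. The error is $O(r^\xi(D^{-2+\xi}+T^{-1/2+\xi}))$, uniformly in $N$. Corollary~\ref{cor:uniform-kernel-seminorms} gives $\int\Psi_{X,K,N,r}\ll(1+r)^{-A}$, so the $r$-sum converges absolutely. Summing over $N\asymp X$, this error contributes $O(XP\cdot X^{-\delta})$ to $\mathscr Q^{\mathrm{tr}}$, since $D$ and $T$ are positive powers of $X$. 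Hence
\[
\mathscr Q^{\mathrm{tr}}_{X,K}=P\sum_{0\le r\le R}\omega_r\sum^{\mathrm{sf}}_N W(N/X)\,g_r(N)\int_0^\infty\Psi_{X,K,N,r}(u)\,du+O(XP\,X^{-\delta}).
\]

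\emph{Squarefree level average.} Apply Proposition~\ref{prop:squarefree-level-average} with
\[
G(t)=W(t/X)\int_0^\infty\Psi_{X,K,t,r}(u)\,du .
\]
The $t$-derivative bounds of Corollary~\ref{cor:uniform-kernel-seminorms}, with $a=1$, give $\mathcal B(G)\ll(1+r)^{-A}$. Each $r$ then contributes
\[
\frac{\mathfrak B\nu(r)}{\zeta(2)}\int G
\]
for $r\ge1$, and $\mathfrak A/(2\zeta(2))\int G$ for $r=0$, with total error $O(X^{1/2+\xi})$ against a main size $X$. The tail $r>R=X^\vartheta$ is reinserted at cost $R^{-A}$. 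Thus $\mathscr Q^{\mathrm{tr}}$ equals $P$ times the integral over $t$ and $u$ of
\[
\frac{1}{\zeta(2)}\Big[\tfrac{\mathfrak A}{2}\Psi_{t,0}+\mathfrak B\sum_{r\ge1}\nu(r)\Psi_{t,r}\Big](u),
\]
up to admissible errors.

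\emph{Matching with $\mathcal M_{X,K}$.} Unfold $\Psi$ as in \eqref{eq:normalized-kernel-weight}. Since $P\Psi(u)=\Kern_r(t,Pu)\sqrt{t(4Pu-r^2t)}$, the substitution $y=Pu/t$ turns this into a $k$-sum with weight
\[
V(k/K)\,F\big(16\pi^2y/(k-1)^2\big)(-1)^{k/2-1}\,t\,\sqrt{4y-r^2}\,U_{k-2}\big(r/(2\sqrt y)\big).
\]
This is precisely the elliptic term of \eqref{eq:Mk-two-forms} after multiplication by $(k-1)\,t\,\mathfrak c_\varphi/12$. The next step is to check the constant identity
\[
\frac{\mathfrak c_\varphi}{12}\,\alpha=\frac{\mathfrak B}{\pi\zeta(2)}\cdot(\text{normalization}),
\]
using $\alpha=2\pi\prod_\ell D_\ell/(\ell^4-2\ell^2+\ell)$ and \eqref{eq:cphi}; this is a local Euler-factor verification at each prime. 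The $r=0$ channel with constant $\mathfrak A$ should likewise match the $\beta\sqrt y/(k-1)$ term. The additional factor $P/\pi$ and the $\sqrt y$ normalization must be tracked carefully through \eqref{eq:H1-L-expansion}. The term $\gamma\delta_{k,2}y$ is absent because $V(k/K)$ excludes $k=2$.

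\emph{Prime side.} Finally, replace $\sum_p(\log p)\mathcal M_{X,K}(p;F)$ by $\int(\ldots)\,dp$ using Lemma~\ref{lem:smooth-PNT}. The function of $p/P$ involved has bounded $C^1$ norm uniformly, by Proposition~\ref{prop:chebyshev-kernel} together with Proposition~\ref{prop:zubrilina-identity}, which expresses $\M_k$ through the kernel. This costs $(\log X)^{-B}$ relative to $XP^2$.

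The main obstacle is the exact constant matching in the third step, $\mathfrak c_\varphi\alpha/12$ against $\mathfrak B/\zeta(2)$ and $\mathfrak A/2$ against $\beta$, including the factors $\pi$ and $P$. The first thing I would try is a direct comparison of the Euler factors prime by prime, including the factor at $2$.
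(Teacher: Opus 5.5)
Your outline follows the paper's proof step for step. You complete the $f,m$-sums by Proposition~\ref{prop:signed-local-completion}, average $g_r$ over squarefree levels by Proposition~\ref{prop:squarefree-level-average} with $\mathcal B(G_r)\ll_A(1+r)^{-A}$, identify the constants, apply Proposition~\ref{prop:zubrilina-identity}, and finish with Lemma~\ref{lem:smooth-PNT}.

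The constant matching you single out as the main obstacle is immediate and carries no extra normalization. Since $\ell^4-2\ell^2+\ell=\ell(\ell-1)(\ell^2+\ell-1)$, one has for every prime $\ell$, including $\ell=2$,
\[
 \frac{D_\ell}{\ell^4-2\ell^2+\ell}\Big(1-\frac1{\ell^2+\ell}\Big)
 =\frac{D_\ell}{\ell^2(\ell^2-1)}
 =\Big(1-\frac1{\ell^2}\Big)\frac{D_\ell}{(\ell^2-1)^2}.
\]
Hence $\alpha\mathfrak c_\varphi/12=2\pi\mathfrak B/(12\zeta(2)^2)=\mathfrak B/(\pi\zeta(2))$, using $\zeta(2)=\pi^2/6$. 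The analogous identity gives $\beta\mathfrak c_\varphi/12=\mathfrak A/(\pi\zeta(2))$, and $\omega_0=1/2$ is absorbed by $\Phi_{t,0}(p)=2\sqrt{tp}\,\Kern_0(t,p)$. The factors $P$ are accounted for by $P^2\int\Psi_{X,K,t,r}(u)\,du=\int\Phi_{t,r}(p)\,dp$.

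The step whose justification does not hold as written is the last one. Proposition~\ref{prop:chebyshev-kernel} gives derivative bounds only when $r\sqrt N\le\sqrt p$. For $r$ near $2\sqrt{p/t}$ it gives only the pointwise bound, and each term $\sqrt{t(4p-r^2t)}\,U_{k-2}(\cdot)$ is not even $C^1$ in $p$ at $r^2t=4p$. So once you have reinserted the tail $r>R$, that proposition does not give a uniform $C^1$ bound for $p\mapsto\mathcal M_{X,K}(p;F)$.

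The paper avoids this by applying Lemma~\ref{lem:smooth-PNT} only to the channels $r\le R$, where Corollary~\ref{cor:uniform-kernel-seminorms} supplies the $u$-derivatives. It bounds the channels $r>R$, in both the prime sum and the $p$-integral, by the pointwise kernel bound and Chebyshev's estimate, at cost $O(XP^2R^{-A})$.

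Alternatively, your global version can be repaired directly. Since $\Kern_r(t,p)$ depends only on $y=p/t$, the substitution $t=p/y$ gives
\[
 \mathcal M_{X,K}(p;F)=\frac{\mathfrak c_\varphi}{12}\int_0^\infty
 \frac{p^2}{y^3}W\Big(\frac p{yX}\Big)\mathcal S(y)\,dy,
 \qquad
 \mathcal S(y)=\sum_{k\ \mathrm{even}}(k-1)V(k/K)
 F\Big(\frac{16\pi^2y}{(k-1)^2}\Big)\M_k(y),
\]
with $\mathcal S$ independent of $p$. By Proposition~\ref{prop:zubrilina-identity} and the pointwise kernel bound, $\mathcal S(y)\ll K^2$ for $y\asymp K^2$. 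The $p$-derivative now falls only on $p^2W(p/(yX))$. This yields $\lVert\psi\rVert_\infty+\lVert\psi'\rVert_{L^1}\ll1$ for $\psi(u)=\mathcal M_{X,K}(Pu;F)/(XP)$, without differentiating the kernel.
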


\begin{proof}
Complete first the signed \(f\)- and \(m\)-sums in
\eqref{eq:Qtr}.  Proposition~\ref{prop:signed-local-completion} justifies the
completion and gives tails
\(O(r^\xi D^{-2+\xi})\) and
\(O(r^\xi T^{-1/2+\xi})\).
Proposition~\ref{prop:squarefree-level-average}, applied to
\[
 G_r(t)=W(t/X)\int_0^\infty
 \Phi_{t,r}(Pu)\,\frac{du}{P},
\]
has \(\mathcal B(G_r)\ll_A(1+r)^{-A}\).  After the two outer factors
\(P\) are restored, the sum of its error terms is
\(O(P^2X^{1/2+\xi})\), or \(O(X^{-1/2+\xi})\) relative to the main
scale.  The main local constants are
\(\mathfrak B\nu(r)/\zeta(2)\) for \(r\geq1\) and
\(\mathfrak A/(2\zeta(2))\) for the half-weighted \(r=0\) channel.
Consequently,
\begin{align}
 \frac P\pi\mathscr Q_{X,K}^{\mathrm{tr}}
 ={}&\frac1\pi\int_0^\infty
 \bigg\{
 \frac{\mathfrak B}{\zeta(2)}
 \sum_{r\geq1}\nu(r)
 \int_0^\infty W(t/X)\Phi_{t,r}(p)\,dt \notag\\
 &\hspace{24mm}
 +\frac{\mathfrak A}{2\zeta(2)}
 \int_0^\infty W(t/X)\Phi_{t,0}(p)\,dt
 \bigg\}\,dp
 \notag\\
 &\quad+O(XP^2X^{-\delta}).                    \label{eq:completed-principal}
\end{align}
Here we used \(p=Pu\) and
\(\Psi_{X,K,t,r}(u)=P^{-1}\Phi_{t,r}(Pu)\).

The identities
\[
 \frac{\alpha\mathfrak c_\varphi}{12}
 =\frac{\mathfrak B}{\pi\zeta(2)},\qquad
 \frac{\beta\mathfrak c_\varphi}{12}
 =\frac{\mathfrak A}{\pi\zeta(2)}
\]
are seen directly from the local identities
\begin{align*}
 &\frac{D_\ell}{\ell^4-2\ell^2+\ell}
 \left(1-\frac1{\ell^2+\ell}\right)
 =\left(1-\frac1{\ell^2}\right)
 \frac{D_\ell}{(\ell^2-1)^2},\\
 &\frac{\ell^3+\ell^2-1}{\ell(\ell^2+\ell-1)}
 \left(1-\frac1{\ell^2+\ell}\right)
 =\left(1-\frac1{\ell^2}\right)
 \left(1+\frac{\ell}{(\ell+1)^2(\ell-1)}\right).
\end{align*}
Indeed, multiply over \(\ell\), use
\(\prod_\ell(1-\ell^{-2})=\zeta(2)^{-1}\), and then
\(\zeta(2)=\pi^2/6\).  This proves both displayed identities from
\eqref{eq:cphi}, \eqref{eq:alpha-intro}, \eqref{eq:beta-intro},
\eqref{eq:B-constant}, and \eqref{eq:A-parabolic}.  For \(r=0\),
\(\Phi_{t,0}(p)=2\sqrt{tp}\,\Kern_0(t,p)\), so the second identity
matches the coefficient \(\mathfrak A/(2\pi\zeta(2))\) in
\eqref{eq:completed-principal}.  Proposition~\ref{prop:zubrilina-identity}
therefore identifies \eqref{eq:completed-principal} with the continuous
prime integral of \(\mathcal M_{X,K}(p;F)\).  We keep the \(r\)-sum
truncated while replacing this integral by primes.  Put
\[
 H_r(y)=\int_0^\infty W(t/X)\Phi_{t,r}(y)\,dt,
 \qquad
 \psi_r(u)=\frac{H_r(Pu)}{XP}.
\]
Equivalently,
\[
 \psi_r(u)=\frac1X\int_0^\infty
 W(t/X)\Psi_{X,K,t,r}(u)\,dt.
\]
Corollary~\ref{cor:uniform-kernel-seminorms} shows that, for
\(0\leq r\leq R\), the functions \(\psi_r\) have a common compact
support and
\begin{equation}\label{eq:PNT-r-seminorms}
 \lVert\psi_r\rVert_\infty+
 \lVert\psi_r'\rVert_{L^1}\ll_A(1+r)^{-A}.
\end{equation}
Lemma~\ref{lem:smooth-PNT}, followed by absolute summation over \(r\),
gives
\begin{align}
 \sum_{0\leq r\leq R}c_r
 \left\{\sum_p(\log p)H_r(p)-\int_0^\infty H_r(y)\,dy\right\}
 \ll_B XP^2(\log P)^{-B},                     \label{eq:PNT-r-sum}
\end{align}
where
\[
 c_0=\frac{\mathfrak A}{2\pi\zeta(2)},
 \qquad
 c_r=\frac{\mathfrak B\nu(r)}{\pi\zeta(2)}\quad(r\geq1).
\]
For \(r>R\), no derivative estimate is needed.  The pointwise kernel
bound and Chebyshev's estimate \(\sum_{p\asymp P}\log p\ll P\) give,
after increasing \(A\),
\[
 \sum_{r>R}|c_r|
 \left\{\sum_p(\log p)|H_r(p)|
 +\int_0^\infty|H_r(y)|\,dy\right\}
 \ll_A XP^2R^{-A}.
\]
Combining this with \eqref{eq:PNT-r-sum} replaces the continuous
integral by
\[
                    \sum_p(\log p)\mathcal M_{X,K}(p;F).
\]
Since \(\log P\asymp_{A_0}\log X\), the error is uniform in the
stated range.
\end{proof}

\section{Poisson summation and the cubic range}
\label{sec:exact-conductor}

We keep a prime \(p\asymp P\) fixed and first record a Poisson estimate
uniform in the exponents \(v_\ell(m)\).

For an integral polynomial \(Q\) of degree at most two, put
\[
 \chi_{Q,m}(u)=\left(\frac{Q(u)}m\right),
 \qquad
 q_m=8\prod_{\substack{\ell\mid m\\ \ell\ {\rm odd}}}\ell.
\]
The first function is periodic modulo \(q_m\).  Let \(\vartheta\) be a
function modulo \(8\) with \(\lVert\vartheta\rVert_\infty\leq1\), and set
\[
 \chi_{Q,m}^{\vartheta}(u)=\vartheta(u)\chi_{Q,m}(u).
\]

\begin{lemma}[Fourier mass by exact conductor]
\label{lem:exact-conductor-mass}
Let \(L,S>0\), and let \(G\in C_c^\infty(\mathbb R)\) be supported in an
interval of length \(O(L)\), with
\[
                 \|G^{(j)}\|_\infty\ll_j SL^{-j}
                 \qquad(j\geq0).
\]
For every \(\xi>0\), uniformly in \(Q,m,L\) and \(\vartheta\),
\begin{equation}\label{eq:exact-conductor-mass}
 \sum_{u\in\mathbb Z}G(u)\chi_{Q,m}^{\vartheta}(u)
 =c_{Q,m}^{\vartheta}(0)\int_{\mathbb R}G(t)\,dt
  +O_\xi(Sm^{1/2+\xi}),
\end{equation}
where
\[
 c_{Q,m}^{\vartheta}(0)
 =\frac1{q_m}\sum_{a\bmod q_m}\chi_{Q,m}^{\vartheta}(a).
\]
In particular, a restriction to a collection \(\mathcal R\) of residue
classes modulo \(8\) is obtained by taking
\(\vartheta=\mathbf1_{\mathcal R}\); the zero coefficient then includes
that restriction.
\end{lemma}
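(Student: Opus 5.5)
We split the sum into residue classes modulo \(q_m\) and apply Poisson summation. Since \(\chi_{Q,m}^{\vartheta}\) is periodic modulo \(q_m\), we have
\[
 \sum_{u}G(u)\chi_{Q,m}^{\vartheta}(u)
 =\frac1{q_m}\sum_{h\bmod q_m}\widehat{\chi}(h)
 \sum_{j\in\mathbb Z}\widehat G\!\left(\frac{j}{q_m}\right)\ldots ,
\]
or more cleanly
\[
 \sum_{u}G(u)\chi(u)=\sum_{h\in\mathbb Z}c(h)\widehat G(h/q_m),
 \qquad
 c(h)=\frac1{q_m}\sum_{a\bmod q_m}\chi(a)e(ah/q_m).
\]
The term \(h=0\) is exactly \(c_{Q,m}^{\vartheta}(0)\int G\). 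It remains to bound the nonzero frequencies by \(O_\xi(Sm^{1/2+\xi})\), uniformly in \(L\).

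Two inputs are needed. The first is the decay of \(\widehat G\). Integrating by parts \(A\) times gives
\[
 |\widehat G(\tau)|\ll_A SL(1+L|\tau|)^{-A}.
\]
The second is a bound for the complete sums \(c(h)\). Factor \(q_m=8\prod_{\ell\mid m,\ \ell\ \mathrm{odd}}\ell\) and use the Chinese remainder theorem; the symbol \((Q(u)/m)\) is multiplicative in the prime powers of \(m\), and modulo an odd \(\ell\) it depends only on whether \(v_\ell(m)\) is odd or even. This splits \(c(h)\) into a product of local sums modulo \(8\) and modulo each odd \(\ell\mid m\). We treat the odd prime factors in two cases.
\begin{itemize}
\item If \(v_\ell(m)\) is even, the local factor is \(\mathbf 1_{\ell\nmid Q(a)}\). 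Its additive transform is \(\delta_{h\equiv0}\) minus a sum over at most two roots, so it is \(O(\ell^{-1})\) for \(h\not\equiv0\) and \(O(1)\) otherwise.
\item If \(v_\ell(m)\) is odd, the local factor is a Legendre-symbol sum \(\ell^{-1}\sum_a(Q(a)/\ell)e(ah/\ell)\) with \(Q\) of degree at most two. Weil's bound (or the explicit Salié/Gauss evaluation) gives \(O(\ell^{-1/2})\), unless \(Q\) is degenerate modulo \(\ell\).
\end{itemize}
In the degenerate case (\(Q\) constant, or a constant multiple of a square, modulo \(\ell\)) the character is constant off a root. The factor is then \(O(1)\) at \(h\equiv0\) and \(O(\ell^{-1})\) otherwise, and it is absorbed in the same way as the first case. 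The factor at \(8\) is trivially bounded by \(1\), since \(\|\vartheta\|_\infty\le1\).

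Next we organise the nonzero frequencies by exact additive conductor. Write \(d=q_m/(h,q_m)\), the denominator of \(h/q_m\) in lowest terms. The local analysis gives \(|c(h)|\ll_\xi m^\xi\prod_{\ell\mid d}\ell^{-1/2}\ll m^{\xi}d^{-1/2}\), where the \(m^\xi\) covers divisor-type constants and the prime \(2\). For a given \(d\mid q_m\), the frequencies \(h/q_m\) run over fractions \(b/d\) with \((b,d)=1\). Hence
\[
 \sum_{h\ne0}|c(h)\widehat G(h/q_m)|
 \ll m^\xi\sum_{d\mid q_m}d^{-1/2}\sum_{b\ne0}SL(1+L|b|/d)^{-2}
 \ll m^\xi S\sum_{d\mid q_m}d^{-1/2}\,d .
\]
The inner sum is \(O(SL\cdot d/L)=O(Sd)\) when \(d\ge L\), and \(O(SL\cdot d/L)\) again after counting \(b\) against the decay; in both cases it is \(\ll S(d+L\cdot\mathbf 1_{d\ge L}\cdot 0)\ll Sd\). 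This bound does not depend on \(L\), which is what the statement requires. The total is therefore \(S\,m^\xi\sum_{d\mid q_m}d^{1/2}\ll S\,q_m^{1/2+\xi}\ll S\,m^{1/2+\xi}\), because the radical of \(m\) is at most \(m\).

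The main obstacle is the uniform square-root bound for the complete local sums modulo each odd \(\ell\). It has to hold for every integral \(Q\) of degree at most two, including the degenerate reductions. The first thing I would try is to complete the square and evaluate the quadratic Gauss/Jacobi sums explicitly, handling the cases where the discriminant or the leading coefficient vanishes modulo \(\ell\) separately. If this fails for some family of \(Q\), the fallback is Weil's bound, which is uniform for all non-square \(Q\) modulo \(\ell\).

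The final assertion about restricting to residue classes modulo \(8\) is immediate: taking \(\vartheta=\mathbf 1_{\mathcal R}\) keeps \(\|\vartheta\|_\infty\le1\), so the argument applies unchanged, and the restriction is built into \(c_{Q,m}^{\vartheta}(0)\) by definition.
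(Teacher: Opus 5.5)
Your proposal is correct and follows essentially the paper's argument. The common steps are Poisson summation, CRT factorization of the complete sums with local bounds $2\ell^{-1/2}$ (Weil) or $O(\ell^{-1})$ (even exponent or degenerate $Q$), grouping the nonzero frequencies by exact denominator $d$, a count $\ll Sd$ per denominator that is uniform in $L$, and $\sum_{d\mid q_m}d^{1/2}\ll m^{1/2+\xi}$. The only difference is cosmetic: the paper splits off the $2$-adic frequency modulo $8$ (via $\sum_v|A_2(v)|\le 8$ and an injective lattice map), while you absorb it into the reduced denominator.

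One correction to your side remark: for a non-degenerate quadratic $Q$ the local sum is a Kloosterman sum, which has no closed-form evaluation. So Weil's bound is the essential input, as in the paper, not a fallback.
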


\begin{proof}
Write
\[
 c_{Q,m}^{\vartheta}(h)=\frac1{q_m}\sum_{a\bmod q_m}
       \chi_{Q,m}^{\vartheta}(a)e(-ha/q_m).
\]
At an odd prime \(\ell^t\Vert m\), the local function is
\(\chi_\ell(Q)\) when \(t\) is odd and
\(\mathbf1_{\ell\nmid Q}\) when \(t\) is positive and even.  If
\(h_\ell\not\equiv0\pmod\ell\), its normalized Fourier coefficients
satisfy
\begin{equation}\label{eq:local-polynomial-Fourier}
 |c_{\ell,t}(h_\ell)|\leq
 \begin{cases}
  2\ell^{-1/2},&t\ {\rm odd},\\
  2\ell^{-1},&t\ {\rm even}.
 \end{cases}
\end{equation}
For odd $t$, if the reduction of $Q$ is not a constant multiple of a
square, the first line is the Gauss--Weil bound for a polynomial of degree
at most two.  If $Q=cR^2$ with $c\ne0$, then
$\chi_\ell(Q)=\chi_\ell(c)\mathbf1_{R\ne0}$, and the nonzero Fourier
transform is supported by the complement of at most one root; its
normalized size is $O(\ell^{-1})$.  A nonzero constant has zero transform
at nonzero frequency, and the zero polynomial contributes identically
zero.  For positive even $t$, the local function is
$\mathbf1_{\ell\nmid Q}$; subtracting the at most two roots of $Q$ gives
the second line, with the same constant and zero cases included.  At
frequency zero every normalized local coefficient has modulus at most one.

Put
\[
 R_m=\prod_{\substack{\ell\mid m\\ \ell\ {\rm odd}}}\ell.
\]
Write \(A_2(v)\) for the normalized Fourier coefficient at \(v\bmod8\)
of the product of \(\vartheta\) with the \(2\)-adic factor.  Since this
modulus is fixed,
\[
                         \sum_{v\bmod8}|A_2(v)|\leq8.
\]
For every odd \(\ell\mid m\), denote the normalized local coefficients
by \(A_\ell(h_\ell)\).  In a product frequency, put
\[
                         d=\prod_{h_\ell\ne0}\ell.
\]
By the Chinese remainder theorem, the odd part of the resulting
frequency has a unique reduced representative \(a/d\), with
\((a,d)=1\).  Conversely, such an \(a\) determines the nonzero local
frequencies.  Fourier inversion therefore gives
\begin{equation}\label{eq:exact-conductor-expansion}
 \chi_{Q,m}^{\vartheta}(u)
 =\sum_{d\mid R_m}\ \sideset{}{^*}\sum_{a\bmod d}
 C_d(a)e(au/d)
 \sum_{v\bmod8}A_2(v)e(vu/8),
\end{equation}
where for \(d=1\) the starred sum consists of \(a=0\).  By
\eqref{eq:local-polynomial-Fourier}, with \(\xi/3\) in place of
\(\xi\),
\begin{equation}\label{eq:exact-conductor-coefficient}
 |C_d(a)|\leq2^{\omega(d)}d^{-1/2}
 \ll_\xi m^{\xi/3}d^{-1/2}.
\end{equation}

With
\(\widehat G(\tau)=\int_{\mathbb R}G(x)e(-x\tau)\,dx\), Poisson
summation applied to \eqref{eq:exact-conductor-expansion} gives
\begin{align}
 \sum_{u\in\mathbb Z}G(u)\chi_{Q,m}^{\vartheta}(u)
 ={}&\sum_{d\mid R_m}\ \sideset{}{^*}\sum_{a\bmod d}C_d(a)
 \sum_{v\bmod8}A_2(v) \notag\\
 &\hspace{15mm}\times
 \sum_{k\in\mathbb Z}
 \widehat G\!\left(k-\frac v8-\frac ad\right).
                                                        \label{eq:conductor-Poisson}
\end{align}
The unique zero argument is \((d,a,v,k)=(1,0,0,0)\); its coefficient is
\(c_{Q,m}^{\vartheta}(0)\), and it gives the main term in
\eqref{eq:exact-conductor-mass}.  Integration by parts gives
\[
 |\widehat G(\tau)|\ll_A SL(1+L|\tau|)^{-A}.
\]
For the remaining terms and \(A>1\), the map
\[
                 (v,a,k)\longmapsto n=8dk-vd-8a
\]
is injective for fixed \(d\).  Hence, after enlarging the starred sum
to all \(a\bmod d\),
\begin{equation}\label{eq:frequency-lattice}
 L\sum_{v\bmod8}\sum_{a\bmod d}\sum_{k\in\mathbb Z}^{\prime}
 \left(1+L\left|k-\frac v8-\frac ad\right|\right)^{-A}
 \leq L\sum_{n\ne0}
 \left(1+\frac{L|n|}{8d}\right)^{-A}
 \ll_A d,
\end{equation}
where the prime omits a zero argument when it occurs.  Equations
\eqref{eq:exact-conductor-coefficient} and
\eqref{eq:frequency-lattice} show that the contribution with odd
denominator \(d\) is \(O_\xi(Sm^{\xi/3}d^{1/2})\).  Finally,
\[
 \sum_{d\mid R_m}d^{1/2}
 =R_m^{1/2}\prod_{\ell\mid R_m}(1+\ell^{-1/2})
 \ll_\xi m^{1/2+\xi/3}.
\]
Summing over \(d\), and harmlessly enlarging \(2\xi/3\) to \(\xi\),
proves \eqref{eq:exact-conductor-mass}.
\end{proof}

We next isolate the \(2\)-adic condition on the level.

\begin{lemma}[The \(2\)-adic level conditions]
\label{lem:two-adic-level-classes}
Let \(p\) be odd, let \(r\geq0\), and write
\(f=2^jf_{\mathrm o}\), with \(f_{\mathrm o}\) odd.  Put
\[
                    \Delta_{p,r}(N)=N(r^2N-4p),
\]
and use the convention \(v_2(0)=+\infty\).  Once
\(f_{\mathrm o}^2\mid\Delta_{p,r}(N)\) has been imposed, the congruence
\(f_{\mathrm o}^2\equiv1\pmod8\) shows that the \(2\)-adic part of the conditions
\[
 f^2\mid\Delta_{p,r}(N),\qquad
 \Delta_{p,r}(N)/f^2\equiv0,1\pmod4
\]
is equivalent to
\[
 2^{2j}\mid\Delta_{p,r}(N),\qquad
 \Delta_{p,r}(N)/2^{2j}\equiv0,1\pmod4.
\]
For \(j=0\) this local condition holds for every \(N\).  Suppose now
that \(j\geq1\) and \(v_2(N)\leq1\).

\begin{enumerate}
\item If \(2\mid N\), or if \(N\) and \(r\) are both odd, there is no
admissible class.

\item If \(N\) is odd and \(v_2(r)\geq2\), then \(j=1\) is admissible
exactly when
\[
                         N\equiv-p\pmod4,
\]
and \(j\geq2\) is impossible.

\item If \(N\) is odd and \(v_2(r)=1\), write \(r=2r_0\), with \(r_0\)
odd.  For \(j=1\), admissibility is equivalent to
\[
                         N\equiv p\pmod4.
\]
For \(j\geq2\), it is equivalent to
\[
 r_0^2N\equiv p\pmod{2^{2j-2}},
 \qquad
 \frac{r_0^2N-p}{2^{2j-2}}\equiv0\ \text{or}\ N\pmod4.
\]
In this case there is one admissible class modulo \(4\) when \(j=1\),
and two admissible classes modulo \(2^{2j}\) when \(j\geq2\).
\end{enumerate}

In particular, if \(\mathcal C_j(p,r)\) is the set of admissible odd classes
modulo \(2^{2j}\), then
\begin{equation}\label{eq:two-adic-class-mass}
                  \sum_{j\geq1}2^{-j}\#\mathcal C_j(p,r)
                  \leq\frac32.
\end{equation}
\end{lemma}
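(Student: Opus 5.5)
The plan is a direct $2$-adic case analysis of $\Delta=\Delta_{p,r}(N)=N(r^2N-4p)$, in the same spirit as the two-adic table \eqref{eq:two-adic-local-table} but with the roles of level and prime exchanged.

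\emph{Reduction to the $2$-part.} Since $f_{\mathrm o}^2$ is an odd square, $f_{\mathrm o}^2\equiv1\pmod8$. Once $f_{\mathrm o}^2\mid\Delta$ is imposed, write $\Delta=f_{\mathrm o}^2\Delta'$. Then $f^2\mid\Delta$ is equivalent to $2^{2j}\mid\Delta'$, and this in turn is equivalent to $2^{2j}\mid\Delta$. Moreover $\Delta/f^2\equiv\Delta/2^{2j}\pmod 8$, because the two quotients differ by the factor $f_{\mathrm o}^{2}\equiv1\pmod 8$. This proves the reformulation. For $j=0$ the condition reads $\Delta\equiv0,1\pmod4$. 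Since $N^2r^2\equiv0,1\pmod4$ and $\Delta=N^2r^2-4Np$, this always holds, which settles the claim for $j=0$.

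\emph{Case (1).} If $N=2N_0$ with $N_0$ odd, reuse the computation from the proof of Lemma~\ref{lem:squarefree-compatible-restriction}. There $\Delta/4=N_0(r^2N_0-2p)$, which is $\equiv3\pmod4$ for $r$ odd and $\equiv2\pmod4$ for $r$ even, so that $v_2(\Delta)=3$ in the latter case. Hence $j=1$ is inadmissible and $2^{2j}\nmid\Delta$ for $j\ge2$. If $N$ and $r$ are both odd, then $\Delta$ is odd, so no $j\ge1$ can occur.

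\emph{Cases (2) and (3).} Let $N$ be odd and $r=2r_0$, so that $\Delta=4N(r_0^2N-p)$.
\begin{itemize}
\item If $r_0$ is even, then $r_0^2N-p$ is odd and $v_2(\Delta)=2$, so $j\ge2$ is impossible. For $j=1$ the quotient is $N(r_0^2N-p)\equiv-Np\pmod4$. It must be $\equiv1\pmod 4$, i.e.\ $N\equiv-p\pmod4$, using $p^2\equiv1\pmod4$.
\item If $r_0$ is odd, then $r_0^2\equiv1\pmod8$ and the relevant factor is $c:=r_0^2N-p$, which is even. For $j=1$ the quotient $Nc$ is automatically $\equiv0\pmod2$, and it must be $\equiv0\pmod4$. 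This holds iff $4\mid c$, i.e.\ $N\equiv p\pmod4$.
\item For $j\ge2$ (still $r_0$ odd) we need $2^{2j-2}\mid c$, i.e.\ $r_0^2N\equiv p\pmod{2^{2j-2}}$. The quotient $N\cdot c/2^{2j-2}$ must then be $\equiv0,1\pmod4$. Writing $s=c/2^{2j-2}$ and multiplying by $N$ (using $N^2\equiv1\pmod4$), this becomes $s\equiv0$ or $N\pmod4$. Note that $s\equiv2\pmod4$ and $s\equiv-N\pmod4$ are excluded.
\end{itemize}

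\emph{Counting classes.} For $j=1$ exactly one odd class modulo $4$ is admissible. For $j\ge2$, the congruence $r_0^2N\equiv p\pmod{2^{2j-2}}$ fixes $N$ modulo $2^{2j-2}$. Among its four lifts modulo $2^{2j}$, the value $s$ runs through all residues modulo $4$. Exactly two of these are admissible: $s\equiv0$ and $s\equiv N$, the latter well defined since $N$ is fixed modulo $4$.

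\emph{The bound \eqref{eq:two-adic-class-mass}.} In case (2) the sum is $2^{-1}\cdot1=\tfrac12$. In case (3) it is
\[
\tfrac12+\sum_{j\ge2}2\cdot2^{-j}=\tfrac12+1=\tfrac32 .
\]
In case (1) the sum is $0$.

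The only delicate point is the $j\ge2$ lift count. There one must track that $s$ depends affinely on the lift with unit slope, namely $r_0^2\cdot 2^{2j-2}/2^{2j-2}$, so that each residue of $s$ modulo $4$ occurs exactly once. I expect this step to need care rather than new ideas.
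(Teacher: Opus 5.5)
Your proof is correct and follows essentially the same route as the paper: the same factorizations $\Delta=4N_0(r^2N_0-2p)$ and $\Delta=4N(r_0^2N-p)$, the same split according to $v_2(r)$, and the same lift count in which $(r_0^2N-p)/2^{2j-2}$ shifts by $k$ modulo $4$ along the lifts $N_0+k2^{2j-2}$. Your explicit check of the reduction to the $2$-part via $\Delta/2^{2j}=f_{\mathrm o}^2\cdot(\Delta/f^2)\equiv\Delta/f^2\pmod 8$ fills in a step that the paper only asserts in the statement.
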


\begin{proof}
For \(j=0\), one has
\[
 \Delta_{p,r}(N)\equiv
 \begin{cases}
  0\pmod4,&2\mid N\ \text{or}\ 2\mid r,\\
  N^2\equiv1\pmod4,&2\nmid Nr.
 \end{cases}
\]
If \(N=2N_0\), with \(N_0\) odd, then
\[
 \Delta_{p,r}(N)=4N_0(r^2N_0-2p).
\]
For \(r\) odd, the quotient by \(4\) is \(3\pmod4\) and
\(v_2(\Delta_{p,r}(N))=2\); for \(r\) even, it is \(2\pmod4\) and
\(v_2(\Delta_{p,r}(N))=3\).  Thus no \(j\geq1\) is admissible.  If
\(N\) and \(r\) are odd, then \(\Delta_{p,r}(N)\) is odd, which proves
the other assertion in the first case.

Suppose that \(N\) is odd.  If \(r=4s\), then
\[
 \Delta_{p,r}(N)=4N(4s^2N-p),
\]
so \(v_2(\Delta_{p,r}(N))=2\).  For \(j=1\), the quotient is
\(-Np\pmod4\), hence is admissible exactly when
\(N\equiv-p\pmod4\); no \(j\geq2\) is possible.

Finally, let \(r=2r_0\), with \(r_0\) odd.  Then
\[
 \Delta_{p,r}(N)=4N(r_0^2N-p).
\]
For \(j=1\), the quotient is even, so it is admissible precisely when
\(r_0^2N-p\equiv0\pmod4\), equivalently \(N\equiv p\pmod4\).  For
\(j\geq2\), put \(e=2j-2\).  Divisibility is equivalent to
\[
                         r_0^2N\equiv p\pmod{2^e}.
\]
Writing \(y=(r_0^2N-p)/2^e\), the quotient is \(Ny\); since \(N\) is
odd, the discriminant condition is exactly
\(y\equiv0\) or \(N\pmod4\).

The first congruence determines one class \(N_0\pmod{2^e}\).  Its four
lifts modulo \(2^{e+2}\) are \(N_k=N_0+k2^e\), \(0\leq k<4\), and
\[
 \frac{r_0^2N_k-p}{2^e}
 \equiv \frac{r_0^2N_0-p}{2^e}+k\pmod4,
\]
because \(r_0^2\equiv1\pmod4\).  These four values exhaust the residues
modulo \(4\), while \(N_k\pmod4\) is independent of \(k\).  Exactly
two lifts therefore satisfy the discriminant condition.  Finally,
\[
 \frac12+\sum_{j\geq2}\frac2{2^j}=\frac32,
\]
which proves \eqref{eq:two-adic-class-mass}.
\end{proof}

In the applications below \(D<p\) for all sufficiently large \(X\);
indeed,
\[
 D=X^\vartheta<X^{1+2\eps_0}\ll XK^2\asymp p.
\]
Lemma~\ref{lem:squarefree-compatible-restriction} therefore allows us,
for each \(f\leq D\), to replace \(\widetilde h_{p,r,f,m}\) by
\(\widetilde h^{\sharp}_{p,r,f,m}\) in the squarefree level sum.  We
then open the squarefree condition,
\begin{equation}\label{eq:squarefree-sieve}
                         \mu^2(N)=\sum_{b^2\mid N}\mu(b).
\end{equation}
On the support of the restricted function,
\((f_{\mathrm o},Nr)=1\) and
\[
                         r^2N\equiv4p\pmod{f_{\mathrm o}^2}.
\]
Indeed, the first coprimality in \(N\) is part of \(\rho_f\); if an
odd prime \(\ell\mid(f_{\mathrm o},r)\), then
\(\Delta_{p,r}(N)\equiv-4pN\not\equiv0\pmod\ell\).
Moreover, \(b^2\mid N\) implies \((b,f)=1\): the odd part follows from
\((N,f_{\mathrm o})=1\), while for \(j\geq1\) the definition of
\(\rho_f\) forces \(N\), and hence \(b\), to be odd.  For fixed
\(b,f,r\), the \(2\)-adic conditions are now given exactly by
Lemma~\ref{lem:two-adic-level-classes}.  When \(j=0\), no parity
restriction is added.  When \(j\geq1\), the factor \(\rho_f\) restricts
\(N\) to the odd classes.  There are no such classes if \(r\) is odd;
if \(4\mid r\), then among \(j\geq1\) only \(j=1\) can occur, with
\(N\equiv-p\pmod4\); and if \(v_2(r)=1\), there is one class for
\(j=1\) and two classes for each \(j\geq2\), as described in the lemma.
In particular, their total number after multiplication by the weight
\(2^{-j}\) from \(1/f\) is bounded by
\eqref{eq:two-adic-class-mass}.

Combining these classes with
\(r^2N\equiv4p\pmod{f_{\mathrm o}^2}\) by the Chinese remainder theorem,
and using \((b,f)=1\), gives one progression when $j=0$, at most one
progression when $j=1$, and at most two progressions for each $j\geq2$.
Their total multiplicity after the factor $2^{-j}$ is bounded by
\eqref{eq:two-adic-class-mass}.  Each progression has the form
\begin{equation}\label{eq:level-progression-after-sieve}
                 N=b^2(n_{\mathfrak c}+f^2u),
\end{equation}
with \(n_{\mathfrak c}\) fixed modulo \(f^2\), and
\begin{equation}\label{eq:quotient-polynomial}
 Q_{b,f,r,p}(u)=
 \frac{\Delta_{p,r}(b^2(n_{\mathfrak c}+f^2u))}{f^2}
\end{equation}
is an integral polynomial of degree at most two.
Any remaining factor modulo eight, including the \(2\)-adic factor of
the Kronecker symbol, is included in \(\vartheta\) in
Lemma~\ref{lem:exact-conductor-mass}.  On every resulting progression
the step is exactly \(b^2f^2\), so its
length is \(L\asymp X/(b^2f^2)\).

\begin{lemma}[Exact lifting of the zero coefficient]
\label{lem:exact-zero-lifting}
Let $b,f,q_0\geq1$, put $q=f^2q_0$, $M=b^2q$, and
$L=[b^2,q]$.  Let $h$ be periodic modulo $q$, and let
$\mathcal C\subset\mathbb Z/f^2\mathbb Z$ have the property that
$h(b^2n)=0$ whenever $n\bmod f^2$ does not belong to $\mathcal C$.  Then
\begin{equation}\label{eq:general-zero-lifting}
 \frac1{b^2f^2q_0}
 \sum_{c\in\mathcal C}\sum_{v\bmod q_0}
 h\bigl(b^2(c+f^2v)\bigr)
 =\frac1L\sum_{\substack{a\bmod L\\b^2\mid a}}h(a).
\end{equation}
No coprimality between $b$ and $q_0$ is required.
\end{lemma}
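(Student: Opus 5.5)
The plan is to compare both sides of \eqref{eq:general-zero-lifting} directly as finite sums over the multiples of \(b^2\) modulo a common period. First I rewrite the right-hand side. The integers \(a\bmod L\) with \(b^2\mid a\) are \(a=b^2n\) with \(n\) modulo \(L/b^2\). Since \(L=[b^2,q]\), one has \(L/b^2=q/(b^2,q)\). The right-hand side is therefore
\[
 \frac{1}{L}\sum_{n\bmod q/(b^2,q)}h(b^2n).
\]
The function \(n\mapsto h(b^2n)\) is periodic modulo \(q/(b^2,q)\), because \(b^2\cdot q/(b^2,q)\) is a multiple of \(q\). It is also periodic modulo \(q\), so I can replace the modulus \(q/(b^2,q)\) by \(q\). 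Each class modulo \(q/(b^2,q)\) has exactly \((b^2,q)\) lifts modulo \(q\), so the right-hand side equals
\[
 \frac{1}{L\,(b^2,q)}\sum_{n\bmod q}h(b^2n)
 =\frac{1}{b^2q}\sum_{n\bmod q}h(b^2n),
\]
using \(L(b^2,q)=b^2q\). This is the only place where the lack of coprimality between \(b\) and \(q_0\) (or \(f\)) matters, and the identity \([x,y](x,y)=xy\) absorbs it.

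Next I parametrize \(n\bmod q=f^2q_0\) as \(n=c+f^2v\), with \(c\) running over \(\mathbb Z/f^2\mathbb Z\) and \(v\) over \(\mathbb Z/q_0\mathbb Z\). This is a bijection onto \(\mathbb Z/f^2q_0\mathbb Z\): fix representatives \(c\in[0,f^2)\), and then \(c+f^2v\) with \(0\leq v<q_0\) enumerates \([0,f^2q_0)\) exactly once. Because \(h(b^2\,\cdot)\) is \(q\)-periodic, the value does not depend on the choice of representative of \(c\). The sum becomes
\[
 \frac{1}{b^2f^2q_0}\sum_{c\bmod f^2}\sum_{v\bmod q_0}h\bigl(b^2(c+f^2v)\bigr).
\]

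Finally I apply the support hypothesis. If \(c\notin\mathcal C\), then every \(n=c+f^2v\) satisfies \(n\bmod f^2=c\notin\mathcal C\), so \(h(b^2n)=0\). The \(c\)-sum therefore restricts to \(c\in\mathcal C\), which is exactly the left-hand side of \eqref{eq:general-zero-lifting}.

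I expect no serious obstacle. The only point needing care is the lift-counting step, specifically checking that \(q/(b^2,q)\) is a period of \(n\mapsto h(b^2n)\) and that the normalizations \(1/L\) and \(1/(b^2q)\) match. If I wanted an alternative check, I could compare both sides with the Cesàro mean of \(h(b^2n)\) over \(n\leq Y\) as \(Y\to\infty\). Both sides equal \(1/b^2\) times that average, which confirms the constant independently of any divisibility relations.
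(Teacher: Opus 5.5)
Your proof is correct and is essentially the paper's argument: you use the bijection $(c,v)\mapsto c+f^2v$ onto $\mathbb Z/q\mathbb Z$, restrict to $c\in\mathcal C$ via the support hypothesis, and count lifts with multiplicity $(b^2,q)=M/L$, which is what absorbs the lack of coprimality. The only cosmetic difference is that you divide out $b^2$ and lift from modulus $q/(b^2,q)$ to $q$, whereas the paper multiplies by $b^2$ and lifts admissible classes from modulus $L$ to $M=b^2q$.
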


\begin{proof}
The map $(c,v)\mapsto c+f^2v$ is a bijection from
$(\mathbb Z/f^2\mathbb Z)\times(\mathbb Z/q_0\mathbb Z)$ onto
$\mathbb Z/q\mathbb Z$: reduction modulo $f^2$ recovers $c$, and division
of the difference by $f^2$ recovers $v$ modulo $q_0$.  By the support
assumption, restricting $c$ to $\mathcal C$ does not change the weighted
sum.  Multiplication by $b^2$ is then a bijection from classes modulo $q$
to classes modulo $M=b^2q$ which are divisible by $b^2$.  Hence the left
side of \eqref{eq:general-zero-lifting} equals
\[
 \frac1M\sum_{\substack{a\bmod M\\b^2\mid a}}h(a).
\]
Since $L\mid M$ and both $b^2$ and $q$ divide $L$, every admissible class
modulo $L$ has exactly $M/L$ admissible lifts modulo $M$, and $h$ is constant
on those lifts.  Dividing by $M$ proves \eqref{eq:general-zero-lifting}.
\end{proof}

Let \(\mathcal H^{0}_{X,K}(p;D,T,R,Z)\) and
\(\mathcal H^{\ne0}_{X,K}(p;D,T,R,Z)\) be, respectively, the zero and
nonzero terms obtained by applying
Lemma~\ref{lem:exact-conductor-mass} to every summand of
\eqref{eq:H-truncated}, after the preceding replacement and
\eqref{eq:squarefree-sieve}, with
\(b\leq Z\).  Write \(\mathcal H^{b>Z}_{X,K}(p)\) for the omitted sieve
range.  These three expressions give an exact decomposition of the
truncated sum.

\begin{proposition}[The nonzero frequencies]
\label{prop:global-nonzero}
Let \(D=R=X^\vartheta\), \(T=(XK)^{1/2+\eta}\), and
\(1\leq Z\leq X^{1/4}\).  Uniformly for
\(X^{\eps_0}\leq K\leq X^{3-\eps_0}\) and \(p\asymp XK^2\),
\begin{align}
 \frac{|\mathcal H^{\ne0}_{X,K}(p;D,T,R,Z)|}{X^2K^2}
 &\ll_\xi (XK)^\xi\frac{ZT^{1/2}}X,
                                                        \label{eq:global-nonzero}\\
 \frac{|\mathcal H^{b>Z}_{X,K}(p)|}{X^2K^2}
 &\ll_\xi (XK)^\xi Z^{-1}.             \label{eq:squarefree-tail}
\end{align}
\end{proposition}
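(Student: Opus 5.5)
We bound both quantities term by term in the decomposition of \eqref{eq:H-truncated}. Write the summand for fixed \(r\leq R\), \(f\leq D\), \(m\leq T\), \(b\), and progression \(\mathfrak c\) as \(\frac1{\pi fm}\) times
\[
\sum_{u}G_{r,b,\mathfrak c}(u)\,\chi^{\vartheta}_{Q,m}(u),
\qquad
G_{r,b,\mathfrak c}(u)=W\!\bigl(b^2(n_{\mathfrak c}+f^2u)/X\bigr)\Phi_{b^2(n_{\mathfrak c}+f^2u),r}(p),
\]
with \(Q=Q_{b,f,r,p}\) from \eqref{eq:quotient-polynomial}. We first check the hypotheses of Lemma~\ref{lem:exact-conductor-mass}. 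By Proposition~\ref{prop:chebyshev-kernel}, with \(N\) in place of \(p\) via the logarithmic derivative bounds (valid since \(r\sqrt N\leq\sqrt p\) for \(r\leq R\)), and since the square root factor is \(O(XK)\), the weight \(G\) is supported on an interval of length \(L\asymp X/(b^2f^2)\). Each \(u\)-derivative costs a factor \(b^2f^2/X=L^{-1}\). Hence \(S\ll_A XK\cdot K(1+r)^{-A}=X K^2(1+r)^{-A}\). Lemma~\ref{lem:exact-conductor-mass} then gives a nonzero-frequency contribution of \(O_\xi(X K^2(1+r)^{-A}m^{1/2+\xi})\) per progression, uniformly in the residue data.

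We then sum. The progressions per \((b,f,r)\), weighted by \(2^{-j}\), number \(O(1)\) by \eqref{eq:two-adic-class-mass}. The \(r\)-sum converges absolutely. Summing over \(m\leq T\) gives \(\sum_{m\leq T}m^{-1/2+\xi}\ll T^{1/2+\xi}\). Summing over \(f\leq D\) gives \(\sum 1/f\ll\log X\), and summing over \(b\leq Z\) gives \(Z\). The total is therefore
\[
XK^2\,Z\,T^{1/2}(XK)^{\xi},
\]
which, relative to \(X^2K^2\), is \((XK)^\xi ZT^{1/2}/X\). This proves \eqref{eq:global-nonzero}. The restriction \(K\leq X^{3-\eps_0}\) is not needed for this bound itself; it only ensures that the bound is a power saving later.

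For the tail \(b>Z\), we do not use Poisson summation. We bound \(|\chi|\leq1\) and count terms. For fixed \(r\), \(f\), and \(b>Z\), the levels \(N\asymp X\) with \(b^2\mid N\) and \(N\) in the \(O(1)\) classes modulo \(f_{\mathrm o}^2\) (coprime to \(b\)) number \(O(X/(b^2f^2)+1)\). However, \(N\) ranges only up to \(X\), so \(b\leq\sqrt{X}\). Each term is weighted by \(|\Phi|\ll XK^2(1+r)^{-A}\) times \(\frac1f\sum_{m\leq T}\frac1m\ll \log T\). Summing \(X/(b^2f^2)\) over \(b>Z\) and \(f\) gives \(X/Z\). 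The extra \(+1\) terms total at most \(\sum_{b\leq\sqrt X}\sum_{f\leq D}f^{-1}\ll X^{1/2}\log X\), and this is \(\leq X/Z\) because \(Z\leq X^{1/4}\). Multiplying by \(XK^2\) gives \(X^2K^2(\log X)^2Z^{-1}\), which is \eqref{eq:squarefree-tail}.

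The main obstacle is the derivative bound for \(G\) in \(u\), that is, the uniform smoothness of the kernel in \(N\) on the scale \(L\). When \(b^2f^2\) is large, \(L\) may be \(\ll 1\), and the Fourier decay then becomes useless. We handle this by noting that Lemma~\ref{lem:exact-conductor-mass} only requires \(\|G^{(j)}\|_\infty\ll SL^{-j}\), so the case \(L<1\) is still covered. For such small \(L\), \eqref{eq:frequency-lattice} is bounded crudely by \(O(d)\) only when \(L\gg1\). Otherwise we replace it by the trivial bound \(\#\{u\}\,S\ll S\), which is dominated by \(Sm^{1/2}\). We should also check that the 2-adic and Kronecker factor modulo 8, together with the restriction \(\rho_f\), fit the form \(\vartheta\,\chi_{Q,m}\). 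This holds because \(\rho_f\) imposes conditions modulo \(f_{\mathrm o}\) that are constant on each progression, and its 2-part lies modulo 8 after fixing the class.
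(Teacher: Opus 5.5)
Your proposal is correct and follows the paper's proof essentially step for step. For the nonzero frequencies you apply Lemma~\ref{lem:exact-conductor-mass} on each progression with $S\ll P(1+r)^{-A}$, then sum over $m\leq T$, over $f\leq D$ with the $2$-adic class weights, over $b\leq Z$, and over $r$; for $b>Z$ you use a trivial count of the levels with $b^2\mid N$.

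The small-$L$ case you flag as the main obstacle never arises. Since $b\leq Z\leq X^{1/4}$ and $f\leq D=X^\vartheta$, one has $L\asymp X/(b^2f^2)\gg X^{1/2-2\vartheta}$. Moreover, the bound \eqref{eq:frequency-lattice} is $O(d)$ for every $L>0$, so no separate treatment would be needed even if $L$ were small.
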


\begin{proof}
In \eqref{eq:level-progression-after-sieve}, the smooth weight has
length \(L\asymp X/(b^2f^2)\).
Proposition~\ref{prop:chebyshev-kernel} gives, for every \(j,A\geq0\),
\[
 \left|\frac{d^j}{du^j}
 \left\{W(N/X)\Phi_{N,r}(p)\right\}\right|
 \ll_{j,A}P(1+r)^{-A}L^{-j}.
\]
On each retained progression \(\rho_f\) is identically one.  Thus the
periodic factor remains the same modulo-eight cutoff times
\(\chi_{Q_{b,f,r,p},m}\), and it remains periodic modulo \(q_m\).
The exact-conductor decomposition in
Lemma~\ref{lem:exact-conductor-mass} therefore bounds the nonzero part
for fixed \(b,f,m,r\) by
\(P(1+r)^{-A}m^{1/2+\xi}\).  Restoring the weights in
\eqref{eq:H-truncated} and summing gives
\begin{align*}
 |\mathcal H^{\ne0}_{X,K}(p;D,T,R,Z)|
 &\ll_\xi P
 \sum_{b\leq Z}\sum_{f\leq D}\frac1f
 \sum_{m\leq T}m^{-1/2+\xi}
 \sum_{r\leq R}(1+r)^{-A}\\
 &\ll_\xi PZ(XK)^\xi T^{1/2}.
\end{align*}
Since \(X^2K^2=XP\), this proves \eqref{eq:global-nonzero}.

For the omitted sieve range, \(|\rho_f|\leq1\), and
\[
 \sum_{N\asymp X}\left|
 \sum_{\substack{b^2\mid N\\b>Z}}\mu(b)\right|
 \ll X\sum_{b>Z}b^{-2}\ll X/Z.
\]
Both the complete class-number factor and its truncation satisfy
\begin{multline*}
 |\Kern_r(N,p)|\left\{
 H_1(\Delta_{p,r}(N))
 +\sqrt{|\Delta_{p,r}(N)|}
   \sum_{f\leq D}\frac1f\sum_{m\leq T}\frac1m\right\}\\
 \ll_{\xi,A}P(1+r)^{-A}(XK)^\xi.
\end{multline*}
Summing \(r\) proves
\eqref{eq:squarefree-tail}.
\end{proof}

\begin{proposition}[The zero frequency]
\label{prop:fixed-prime-zero}
With the same notation, there is
\(\delta_0=\delta_0(\eps_0)>0\) such that, for every sufficiently small
fixed \(\xi>0\),
\begin{equation}\label{eq:fixed-prime-zero}
 \mathcal H^{0}_{X,K}(p;D,T,R,Z)
 =\mathcal M_{X,K}(p;F)
 +O_{F,V,W,\eps_0}\!\left(
 XP\{X^{-\delta_0}+(XK)^\xi Z^{-1}\}\right).
\end{equation}
\end{proposition}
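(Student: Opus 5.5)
The plan is to write the zero-frequency term explicitly, lift it back to squarefree means, complete the local series, and then undo the level smoothing to recover $\mathcal M_{X,K}(p;F)$.

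\emph{Step 1: the zero coefficients as truncated squarefree means.} By Lemma~\ref{lem:exact-conductor-mass}, each progression \eqref{eq:level-progression-after-sieve} with fixed $b\le Z$, $f\le D$, $m\le T$, $r\le R$ contributes its zero coefficient times $\int W(t/X)\Phi_{t,r}(p)\,dt/(b^2f^2)$. Because $u\mapsto b^2(n_{\mathfrak c}+f^2u)$ is affine, changing variables gives this integral exactly; the $1/(b^2f^2)$ comes from the step. Summing over the classes $\mathfrak c$, I then apply Lemma~\ref{lem:exact-zero-lifting} with $q=8f^2m$ and $h=\widetilde h^{\sharp}_{p,r,f,m}$. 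The set $\mathcal C$ consists of the classes produced by the congruence $r^2N\equiv4p \pmod{f_{\mathrm o}^2}$ and by Lemma~\ref{lem:two-adic-level-classes}, and the support hypothesis is exactly what $\rho_f$ and those lemmas ensure. This identifies the total weight of fixed $(b,f,m,r)$ with the $b$-th term of $\mathfrak m_{8f^2m}$, so the full contribution for $(f,m,r)$ is
\[
 \mathfrak m_{8f^2m,Z}(\widetilde h^{\sharp}_{p,r,f,m})\int_0^\infty W(t/X)\Phi_{t,r}(p)\,dt .
\]
This step is the one I expect to be most delicate, because of the bookkeeping: the $2$-adic classes and the condition $(b,f)=1$ must match the mean exactly. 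Terms with $(b,f)>1$ vanish identically under $\rho_f$ on both sides, so they cause no discrepancy.

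\emph{Step 2: remove the sieve truncation.} By \eqref{eq:sharp-truncated-mean}, replace the truncated mean by $\mathfrak m_{8f^2m}(\widetilde h_{p,r,f,m})$ at cost $O(Z^{-1})$ per $(f,m)$. Summing with weights $1/(fm)$ over $f\le D$, $m\le T$ costs a factor $\log D\log T\ll(XK)^\xi$. The integral is $\ll XP(1+r)^{-A}$ by Proposition~\ref{prop:chebyshev-kernel}, and summing $r$ with this decay gives the error $XP(XK)^\xi Z^{-1}$.

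\emph{Step 3: complete the local series.} Apply \eqref{eq:fixed-zero-series} for $r\ge1$ and \eqref{eq:r0-fixed-zero} for $r=0$; the hypotheses $D,T,R<p$ hold since $p\asymp XK^2$. The $(f,m)$ double sums become $\mathfrak B\nu(r)/\zeta(2)$ and $\mathfrak A/(2\zeta(2))$, with errors $r^\xi(D^{-2+\xi}+T^{-1/2+\xi})+p^{-2}$, and these are absorbed into $X^{-\delta_0}$ after summing $r$ against $(1+r)^{-A}$. Next extend the $r$-sum from $r\le R$ to all $r$ at cost $R^{-A}$, again using Proposition~\ref{prop:chebyshev-kernel} together with $\nu(r)\ll r^\xi$.

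\emph{Step 4: identify the main term.} What remains is the bracket in \eqref{eq:completed-principal} evaluated at the fixed $p$, multiplied by $1/\pi$ from \eqref{eq:H-truncated}. Using the identities $\alpha\mathfrak c_\varphi/12=\mathfrak B/(\pi\zeta(2))$ and $\beta\mathfrak c_\varphi/12=\mathfrak A/(\pi\zeta(2))$ from the proof of Proposition~\ref{prop:prime-main-identification}, together with Proposition~\ref{prop:zubrilina-identity}, I rewrite this pointwise in $p$.

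\begin{itemize}
\item The $r\ge1$ terms give the Chebyshev sum in $\M_k$. Here $\sqrt{t(4p-r^2t)}=t\sqrt{4y-r^2}$ with $y=p/t$, and the factor $(k-1)$ arises from $1/(k-1)$ in \eqref{eq:Mk-two-forms}.
\item The $r=0$ term gives the $\beta\sqrt y/(k-1)$ part, since $\Phi_{t,0}(p)=2\sqrt{tp}\,\Kern_0(t,p)$.
\item The term $\gamma\delta_{k,2}y$ is absent because $V(k/K)$ kills $k=2$.
\end{itemize}

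Exchanging the finite $k$-sum with the $t$-integral then reproduces \eqref{eq:explicit-main} exactly. This is the same algebra as in Proposition~\ref{prop:prime-main-identification}, now performed without integrating over $p$.
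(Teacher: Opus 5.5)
Your proposal is correct and follows essentially the same route as the paper. The zero coefficients on the progressions are lifted exactly to truncated squarefree means by Lemma~\ref{lem:exact-zero-lifting}, the sieve truncation is removed via \eqref{eq:sharp-truncated-mean} at cost $XP(XK)^\xi Z^{-1}$, the local series are completed by \eqref{eq:fixed-zero-series} and \eqref{eq:r0-fixed-zero}, and the result is matched to $\mathcal M_{X,K}(p;F)$ through the constant identities and Proposition~\ref{prop:zubrilina-identity}. The only cosmetic difference is that you lift directly with $q=8f^2m$ (so $q_0=8m$), whereas the paper uses $q=f^2q_m$ and then enlarges the period; this is harmless because the zero coefficient on each progression is the average of a $q_m$-periodic function and $q_m\mid 8m$.
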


\begin{proof}
Put
\[
                 G_{p,r}(t)=W(t/X)\Phi_{t,r}(p).
\]
We first identify exactly the zero term produced by the progressions
in \eqref{eq:level-progression-after-sieve}.  Put
\[
 q_m=8\prod_{\substack{\ell\mid m\\\ell\ {\rm odd}}}\ell,
 \qquad q_{f,m}^{*}=f^2q_m.
\]
At an odd prime dividing $m$, the Jacobi symbol depends only on the
numerator modulo that prime, while its two-adic component depends only
modulo eight.  Consequently both \(\widetilde h_{p,r,f,m}\) and
\(\widetilde h^{\sharp}_{p,r,f,m}\) are periodic modulo
\(q_{f,m}^{*}\); enlarging this period to \(8f^2m\) does not change
their squarefree means by Lemma~\ref{lem:zero-squarefree-density}.

Fix \(b,f,m\).  After the two-adic refinement, the admissible level
classes were written as
\(N=b^2(n_{\mathfrak c}+f^2u)\).  On such a class the character is a
periodic function modulo \(q_m\), namely
\[
 \vartheta_{\mathfrak c}(u)=
 \widetilde h^{\sharp}_{p,r,f,m}
 \bigl(b^2(n_{\mathfrak c}+f^2u)\bigr).
\]
Its zero term is
\begin{equation}\label{eq:one-progression-zero}
 \frac1{b^2f^2q_m}
 \sum_{v\bmod q_m}\vartheta_{\mathfrak c}(v)
 \int_{\mathbb R}G_{p,r}(t)\,dt.
\end{equation}
Indeed, \(dN=b^2f^2\,du\), and decomposing further modulo \(q_m\)
contributes the factor \(q_m^{-1}\).

Let $M_b=b^2f^2q_m$ and $L_b=[b^2,q_{f,m}^{*}]$.  Apply
Lemma~\ref{lem:exact-zero-lifting} with $q_0=q_m$,
$q=q_{f,m}^{*}$, $h=\widetilde h^{\sharp}_{p,r,f,m}$, and
$\mathcal C=\{n_{\mathfrak c}:\mathfrak c\text{ admissible}\}$.  The
support condition in that lemma is
exactly the square-divisor and discriminant classification used to obtain
\eqref{eq:level-progression-after-sieve}.  We obtain
\begin{align}
 &\frac1{M_b}
 \sum_{\substack{a\bmod M_b\\b^2\mid a}}
       \widetilde h^{\sharp}_{p,r,f,m}(a) \notag\\
 &\hspace{20mm}=
 \frac1{L_b}
 \sum_{\substack{a\bmod L_b\\b^2\mid a}}
       \widetilde h^{\sharp}_{p,r,f,m}(a).      \label{eq:zero-class-lifting}
\end{align}
This identity includes the modulus-eight discriminant classes and does not
require $(b,m)=1$.  Summing \eqref{eq:one-progression-zero} first over the classes
and then over \(b\leq Z\) gives the exact identity
\[
 \mathcal Z_{p,r}^{(Z)}(D,T)=
 \sum_{f\leq D}\frac1f\sum_{m\leq T}\frac1m
 \mathfrak m_{8f^2m,Z}(\widetilde h^{\sharp}_{p,r,f,m}).
\]
\begin{align}
 \mathcal H^{0}_{X,K}(p;D,T,R,Z)
 ={}&\frac1\pi\sum_{0\leq r\leq R}\omega_r
 \mathcal Z_{p,r}^{(Z)}(D,T)
 \int_0^\infty G_{p,r}(t)\,dt.                \label{eq:zero-density-exact}
\end{align}

Define
\[
 \mathcal Z_{p,r}(D,T)=
 \sum_{f\leq D}\frac1f\sum_{m\leq T}\frac1m
 \mathfrak m_{8f^2m}(\widetilde h_{p,r,f,m}).
\]
Lemma~\ref{lem:squarefree-compatible-restriction} gives
\[
 \mathfrak m_{8f^2m,Z}(\widetilde h^{\sharp}_{p,r,f,m})
 =\mathfrak m_{8f^2m}(\widetilde h_{p,r,f,m})+O(Z^{-1}).
\]
Since
\[
 \sum_{f\leq D}\frac1f\sum_{m\leq T}\frac1m
 \ll\log(2D)\log(2T)\ll_\xi(XK)^\xi
\]
and \(\int|G_{p,r}(t)|\,dt\ll_A XP(1+r)^{-A}\),
\eqref{eq:zero-density-exact} becomes
\begin{align}
 \mathcal H^{0}_{X,K}(p;D,T,R,Z)
 ={}&\frac1\pi\sum_{0\leq r\leq R}\omega_r
 \mathcal Z_{p,r}(D,T)
 \int_0^\infty G_{p,r}(t)\,dt \notag\\
 &+O_\xi\!\left(XP(XK)^\xi Z^{-1}\right).       \label{eq:zero-frequency-bridge}
\end{align}
Since \(D,T,R<p\) for large \(X\), equations
\eqref{eq:fixed-zero-series} and \eqref{eq:r0-fixed-zero}, applied
before summing \(r\), give
\begin{align}
 \mathcal H^{0}_{X,K}(p;D,T,R,Z)
 ={}&\frac{\mathfrak B}{\pi\zeta(2)}
 \sum_{r\geq1}\nu(r)\int_0^\infty G_{p,r}(t)\,dt \notag\\
 &+\frac{\mathfrak A}{2\pi\zeta(2)}
 \int_0^\infty G_{p,0}(t)\,dt
 +O(XP E_0),                                    \label{eq:zero-before-M}
\end{align}
where, for every fixed \(\xi>0\),
\[
 E_0\ll_\xi
 R^{-A}+D^{-2+\xi}+T^{-1/2+\xi}+p^{-2}
 +(XK)^\xi Z^{-1}.
\]
The first error uses the rapid decay of the weight kernel, and the last
one restores the range \(b>Z\).

The identities
\[
 \frac{\alpha\mathfrak c_\varphi}{12}
 =\frac{\mathfrak B}{\pi\zeta(2)},\qquad
 \frac{\beta\mathfrak c_\varphi}{12}
 =\frac{\mathfrak A}{\pi\zeta(2)}
\]
hold.  Since \(\Phi_{t,0}(p)=2\sqrt{tp}\,\Kern_0(t,p)\),
the second identity gives
\[
 \frac{\mathfrak A}{2\pi\zeta(2)}\Phi_{t,0}(p)
 =\frac{\beta\mathfrak c_\varphi}{12}
   \sqrt{tp}\,\Kern_0(t,p).
\]
Proposition~\ref{prop:zubrilina-identity}, including its parabolic term,
identifies the first two lines of \eqref{eq:zero-before-M} with
\(\mathcal M_{X,K}(p;F)\).  The Burgess remainder and every term in
\(E_0\), apart from the displayed sieve tail, are negative powers of
\(X\) for the parameters chosen in Section~\ref{sec:pointwise-proof}.
Their total contribution is \(O(X^{-\delta_0})\) for some
\(\delta_0>0\).
\end{proof}

\section{Proof of the pointwise formula}\label{sec:pointwise-proof}

\begin{proof}[Proof of Theorem~\ref{thm:pointwise-main}]
Fix \(0<\eps_0<3/2\), put \(\eps_*=\min(\eps_0,1)\), and choose
\[
 \eta=\frac{\eps_*}{256},\qquad
 \vartheta=\frac{\eps_*}{512},\qquad
 D=R=X^\vartheta.
\]
Lemma~\ref{lem:outer-tails} reduces
\(\mathcal H_{X,K}(p)\) to \(\mathcal H^{\mathrm{tr}}_{X,K}(p)\)
with a power-saving error.

The zero Poisson frequency is evaluated by
Proposition~\ref{prop:fixed-prime-zero}.

It remains to estimate the nonzero frequencies.  Write \(K=X^\rho\)
and choose
\[
 Z=X^{\beta_0},\qquad
 \beta_0=\frac{\eps_*}{16},\qquad
 \sigma=\frac{\eps_*}{64},\qquad
 \xi=\frac{\sigma}{4}.
\]
Since \(T=(XK)^{1/2+\eta}\),
\begin{equation}\label{eq:T-half-exponent}
 \frac{T^{1/2}}X
 =X^{(\rho-3)/4+\eta(1+\rho)/2}.
\end{equation}
For \(\rho\leq3-\eps_0\leq3-\eps_*\), the exponent on the right is at most
\(-\eps_*/4+2\eta\).  Moreover
\((XK)^\xi\leq X^{4\xi}=X^\sigma\), because $1+\rho\leq4$.  Hence the
sieve tail in \eqref{eq:squarefree-tail} is at most
\[
                         X^{\sigma-\beta_0}=X^{-3\eps_*/64}.
\]
For the nonzero frequencies, \eqref{eq:global-nonzero} and
\eqref{eq:T-half-exponent} give the exponent
\[
 \beta_0+\sigma-\frac{\eps_*}{4}+2\eta
 =-\frac{21\eps_*}{128}.
\]
Also $\beta_0\leq1/16$, so $Z\leq X^{1/4}$ as required in
Proposition~\ref{prop:global-nonzero}.  Thus both terms are fixed negative
powers of $X$.
Combining these estimates with \eqref{eq:fixed-prime-zero},
Lemma~\ref{lem:outer-tails}, and \eqref{eq:T-equals-H} proves
\eqref{eq:pointwise-main}.  One may take
any
\[
 \delta<\min\left\{
 \frac{3\eps_*}{64},\,\vartheta,\,
 \frac12\delta_B(\eta),\,\delta_0\right\}.
\]
\end{proof}

\section{Neumann summation}\label{sec:neumann}

Let $0<c_1<c_2$ and $S>0$.  Let $w$ be smooth, supported in
$[c_1K,c_2K]$, and suppose that
$|w^{(j)}(t)|\ll_j S K^{-j}$.
Set
\begin{equation}\label{eq:Tw}
 T_w(x)=\sum_{\substack{n\geq1\\n\ {\rm odd}}}n w(n)J_n(x).
\end{equation}

\begin{proposition}[Smoothed Neumann summation]\label{prop:neumann}
For every $A>0$ and $0<x\leq K^{5/2}$,
\begin{equation}\label{eq:neumann-smoothed}
 T_w(x)=\frac x2w(x)
 +O_A\!\left(S\frac{x}{K^2}\1_{x\asymp K}+SK^{-A}\right).
\end{equation}
For $x>K^{5/2}$,
\begin{equation}\label{eq:neumann-large-x}
 T_w(x)\ll_A SK(K/x)^A.
\end{equation}
\end{proposition}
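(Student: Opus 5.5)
Our plan is to use the integral representation of the Bessel function and sum over $n$ first, so that a Poisson-type argument on the odd integers turns the sum into a phase integral. We start from Bessel's integral
\[
 J_n(x)=\frac1{2\pi}\int_{-\pi}^{\pi}e^{i(x\sin\phi-n\phi)}\,d\phi ,
\]
valid for integer $n$, and insert it into \eqref{eq:Tw}. Interchanging sum and integral gives
\[
 T_w(x)=\frac1{2\pi}\int_{-\pi}^{\pi}e^{ix\sin\phi}\,\Sigma(\phi)\,d\phi,
 \qquad
 \Sigma(\phi)=\sum_{n\ {\rm odd}}nw(n)e^{-in\phi}.
\]
Next we apply Poisson summation on the progression $n\equiv1\pmod 2$, exactly as in \eqref{eq:odd-poisson}. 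The function $t\,w(t)$ has derivatives bounded by $SK^{1-j}$, so
\[
 \Sigma(\phi)=\tfrac12\sum_{\ell\in\mathbb Z}(-1)^\ell\,\widehat{(tw)}\bigl((\phi+\pi\ell)/2\pi\bigr).
\]
The Fourier transform here is concentrated at frequencies $\ll K^{-1+\eps}$, with size $SK^2$. Only the two sheets near $\phi=0$ and $\phi=\pm\pi$ contribute, and by the symmetry $n$ odd they contribute equally after the change $\phi\mapsto\pi-\phi$; indeed $\sin$ is preserved and $e^{-in\pi}=-1$ cancels against $(-1)^\ell$. Everything else is $O_A(SK^{-A})$.

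Near $\phi=0$ we expand $\sin\phi=\phi-\phi^3/6+\cdots$. The leading linear phase $e^{ix\phi}$ reproduces, by Fourier inversion, the value $x\,w(x)$ times the normalizing constant, which gives the main term $\frac x2 w(x)$; the factor $\frac12$ comes from restricting to odd $n$, with the two sheets combining. The cubic correction $e^{-ix\phi^3/6}$ is expanded on $|\phi|\ll K^{-1+\eps}$, where $x\phi^3\ll xK^{-3+3\eps}$. Its first-order term is a derivative of order three of $t\,w(t)$ evaluated at $x$, multiplied by $x$, which is of size $x\cdot SK\cdot K^{-3}=Sx/K^{2}$. This term is supported where $w$ lives, namely $x\asymp K$. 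Higher-order terms are smaller.

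When $x$ is not comparable to $K$ but $x\le K^{5/2}$, we instead treat the integral by stationary phase. The phase is $x\sin\phi-n\phi$ and is stationary where $\cos\phi=n/x$. For $x<c_1K/2$ there is no stationary point, and repeated integration by parts in $\phi$ gives $O_A(SK^{-A})$. For $x\gg K$ the stationary points lie at $\phi$ of order one, outside the $K^{-1+\eps}$ concentration of $\Sigma$, so again only tails survive. We carry this out by integrating by parts in the $n$-variable of the Poisson transform. The restriction $x\le K^{5/2}$ is what keeps the Taylor expansion in the region where $x\phi^3$ is controlled, or else places it in the non-stationary regime with a gain of $K^{-A}$.

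For \eqref{eq:neumann-large-x} we would not use the integral representation. Instead we use the uniform bound $|J_n(x)|\ll x^{-1/2}$ together with the asymptotic expansion of $J_n(x)$ for $x\gg n^2$, which has phase $x-n\pi/2-\pi/4$ plus corrections in $n^2/x$. Summing the smooth weight against $e^{\mp in\pi/2}$ over odd $n$ gives alternating sums that are small by Poisson summation, as in \eqref{eq:alternating-poisson}. The remainder terms of the Hankel expansion, truncated at order $A$, contribute $SK\cdot(K^2/x)^{A}$, which is at least as good as $SK(K/x)^A$.

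The main obstacle is the intermediate regime $K\ll x\le K^{5/2}$, where the Hankel expansion has no uniformity and the stationary point sits in the transition zone. Here we would first try splitting $\Sigma$ at $|\phi|\le K^{-1+\eps}$ and bounding the $\phi$-integral on the complement by the rapid decay of $\Sigma$ alone, without using oscillation of $e^{ix\sin\phi}$. We expect this to give the stated $SK^{-A}$ bound.
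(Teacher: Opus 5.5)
Your skeleton matches the paper's: Bessel's integral, Poisson summation over odd $n$, localization of $\Sigma$ near $0$ and $\pm\pi$, and Taylor expansion of the cubic correction. However, the range $K\ll x\le K^{5/2}$, which you leave as ``the main obstacle'', is a genuine gap in your case analysis. Bounding the complement of the concentration region trivially does not touch the localized piece. Integrating by parts against $e^{ix\sin\phi}$ there gains only $K/x$ per step, so it gives nothing like $SK^{-A}$ when, say, $x=K\log K$. The missing observation is that the Taylor expansion you already wrote is uniform on all of $0<x\le K^{5/2}$:
\begin{itemize}
\item for $|\phi|\le 2K^{-1+\eps}$ one has $x|\sin\phi-\phi|\ll K^{-1/2+3\eps}$;
\item after the cutoff is removed, Fourier inversion turns every term into a constant times $x^mW_1^{(3m+2l)}(\pm x)$, with $W_1(t)=tw(t)$, and this vanishes identically unless $\pm x\in[c_1K,c_2K]$;
\item the remainder after $M$ terms is $\ll x^M\int|\phi|^{3M}|\widehat{W_1}(\phi)|\,d\phi\ll SK^{1-3M}x^M\le SK^{1-M/2}$.
\end{itemize}
This disposes of small, comparable and intermediate $x$ in one stroke, with no stationary phase. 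It is also exactly where the threshold $K^{5/2}$ comes from.

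There are further problems in the main term and the large-$x$ range.
\begin{itemize}
\item \emph{The justification of the main-term constant is wrong.} The sheets at $0$ and $\pm\pi$ do not contribute equally. Since $n$ is odd, $\Sigma(\pi-\phi)=-\Sigma(-\phi)$, so the $\pm\pi$ piece equals $-\int e^{-ix\sin\psi}\Sigma(\psi)\,d\psi$ over $\psi$ near $0$ (the paper's $-I(-x)$). All of its Taylor terms involve $W_1^{(j)}(-x)=0$. The factor $\tfrac12$ is the one already in your Poisson formula, and the $0$-sheet alone gives $\frac1{2\pi}\cdot\frac12\cdot 2\pi W_1(x)=\frac x2w(x)$. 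Equal contributions would give $xw(x)$.
\item \emph{Different route for $x>K^{5/2}$.} The paper does not use Hankel's expansion; it integrates by parts in $\theta$ inside the same localized integral, where $|\cos\theta|\asymp1$ and each amplitude derivative costs $K$.
\item \emph{Gaps in your Hankel route.} It needs a remainder bound uniform in $n\asymp K$; the classical first-neglected-term bound requires truncation order $\gtrsim n$. Also $(K^2/x)^M$ is weaker, not stronger, than $(K/x)^A$, so you need $M\ge 3A$ together with $x>K^{5/2}$.
\item \emph{The leading Hankel terms are not $O(SK(K/x)^A)$.} For fixed $K$, as $x\to\infty$,
\[
T_w(x)=\sqrt{2/(\pi x)}\,\cos(x-3\pi/4)\,\sigma_w+O_K(x^{-3/2}),\qquad \sigma_w=\sum_j(-1)^j(2j+1)w(2j+1).
\]
Here $\sigma_w$ is $O(SK^{-A})$ but generically nonzero. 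So \eqref{eq:neumann-large-x} can only hold with an extra term of size $SK^{-A}x^{-1/2}$. The paper's proof does not give more either, since its localization error $O(SK^{-A})$ does not decay in $x$.
\end{itemize}
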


\begin{proof}
The recurrence
$J_{n-1}(x)+J_{n+1}(x)=2nJ_n(x)/x$ and the Jacobi--Anger expansion give
\begin{equation}\label{eq:neumann-exact}
 \sum_{n\geq1,\ n\ {\rm odd}}nJ_n(x)
 =\frac x2\left(J_0(x)+2\sum_{m\geq1}J_{2m}(x)\right)=\frac x2.
\end{equation}
We need a localized version of this identity.

Set $W_1(t)=tw(t)$.  In this section only, use the angular Fourier transform
$\widehat W_1(\xi)=\int_{\R}W_1(t)e^{-it\xi}\,dt$.  The assumptions on
$w$ imply
\begin{equation}\label{eq:W1-fourier}
 |\widehat W_1^{(b)}(\xi)|
 \ll_{A,b}SK^{b+2}(1+K|\xi|)^{-A}.
\end{equation}
The integral formula for $J_n$ and Poisson summation on the odd integers
give
\begin{equation}\label{eq:Tw-poisson}
\begin{split}
 T_w(x)=\frac1{4\pi}\int_{-\pi}^{\pi}e^{-ix\sin\theta}
 \sum_{j\in\mathbb Z}\bigl\{&
 \widehat W_1(2\pi j-\theta)\\
 &-\widehat W_1(2\pi j-\theta-\pi)\bigr\}\,d\theta.
\end{split}
\end{equation}
By \eqref{eq:W1-fourier}, only neighborhoods of $0$ and $\pm\pi$ of
length $K^{-1+\eta_0}$ contribute, up to $O_A(SK^{-A})$.  Combining the
two half-neighborhoods at $\pm\pi$ yields
\begin{equation}\label{eq:Tw-local-fourier}
 T_w(x)=\frac1{4\pi}\{I(x)-I(-x)\}+O_A(SK^{-A}),
\end{equation}
where
\[
 I(\pm x)=\int_{\R}\chi(\theta)e^{\mp ix\sin\theta}
                  \widehat W_1(-\theta)\,d\theta
\]
and $\chi$ is supported in $|\theta|\leq2K^{-1+\eta_0}$ and equals one
on the smaller contributing neighborhood.

Suppose first that $x\leq K^{5/2}$.  On the support of $\chi$ write
\(
 \sin\theta=\theta+u(\theta)
\), where
\begin{equation}\label{eq:cubic-u}
 u(\theta)=-\frac{\theta^3}{6}
 \left(1-\frac{\theta^2}{20}+O(\theta^4)\right).
\end{equation}
Choose $\eta_0<1/12$.  Taylor expansion of $e^{\mp ixu(\theta)}$ to an
arbitrarily large fixed order is uniform, since
$x|u(\theta)|\ll K^{-1/4}$.  Every retained term is a linear combination
of
\[
 x^m\int_{\R}\theta^{3m+2l}\widehat W_1(-\theta)
 e^{\mp ix\theta}\,d\theta
 =2\pi c_{m,l}^{\pm}x^mW_1^{(3m+2l)}(\pm x),
\]
up to $O_A(SK^{-A})$.  The term $m=l=0$ is $2\pi W_1(\pm x)$.  Since
$x>0$ and $W_1$ is supported in $[c_1K,c_2K]$, only $W_1(x)$ occurs.
For $m\geq1$ the derivative bounds give
\[
 x^m|W_1^{(3m+2l)}(x)|
 \ll S\frac{x}{K^2}
       \left(\frac{x}{K^3}\right)^{m-1}K^{-2l}.
\]
The double sum is $O(Sx/K^2)$; it vanishes outside a fixed enlargement of
the support of $w$.  Equation \eqref{eq:Tw-local-fourier} proves
\eqref{eq:neumann-smoothed}.

If $x>K^{5/2}$, integrate repeatedly in \eqref{eq:Tw-local-fourier} against
the phase $x\sin\theta$.  Its derivative is comparable with $x$ on the
support of $\chi$, while differentiation of the amplitude costs $K$.
This proves \eqref{eq:neumann-large-x}.
\end{proof}

\begin{proposition}[Bessel series to atomic masses]\label{prop:bessel-to-atoms}
Fix $\eps_0,A_0>0$.  Uniformly in
$X^{\eps_0}\leq K\leq X^{A_0}$,
\begin{align}
 &\frac{\mathfrak c_\varphi}{12}
 \sum_p(\log p)
 \sum_{\substack{k\geq2\\k\ {\rm even}}}(k-1)V(k/K)
 \int_0^\infty tW(t/X)
 F\!\left(\frac{16\pi^2p}{t(k-1)^2}\right)\M_k(p/t)\,dt \notag\\
 &\quad=\frac{\mathfrak c_\varphi\alpha}{768\pi^3}
 X^3I_V(K)\left(\int_0^\infty u^2W(u)\,du\right) \notag\\
 &\qquad\times
 \sum_{d,s\geq1}Q(d)\frac{d^3}{s^4}F((d/s)^2)
 +O_A(X^3K^4(\log X)^{-A}).                  \label{eq:bessel-to-atoms}
\end{align}
\end{proposition}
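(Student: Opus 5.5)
The plan is to substitute the Bessel series \eqref{eq:Mk-intro} for $\M_k(p/t)$, move the $(d,s)$-sums outside, and evaluate the inner weight sum with Proposition~\ref{prop:neumann}.

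\textbf{Step 1: reduction to $T_w$.} Write $n=k-1$ and, for fixed $p,t,d,s$, put
\[
 w(\nu)=w_{p,t}(\nu)=V((\nu+1)/K)\,F\!\left(\frac{16\pi^2p}{t\nu^2}\right),
 \qquad
 x=x_{d,s}(p,t)=\frac{4\pi s}{d}\sqrt{p/t}.
\]
Then the $k$-sum in the statement, with $\M_k$ inserted, becomes
\[
 \alpha\sqrt{p/t}\sum_{d,s}\frac{Q(d)}{s}T_w(x).
\]
The weight $w$ is supported on $\nu\asymp K$ and satisfies $w^{(j)}\ll_j K^{-j}$ uniformly in $p\asymp P$ and $t\asymp X$, so $S=1$.

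The interchange of the $(d,s)$-sum with the $k$-sum needs absolute convergence. I get it from two bounds:
\begin{itemize}
\item for $x\leq K/10$, I use $|J_n(x)|\leq (x/2)^n/n!$;
\item for $x$ large, I use \eqref{eq:neumann-large-x}.
\end{itemize}
Since $\sqrt{p/t}\asymp K$, we have $x\asymp Ks/d$. Together with $Q(d)\ll d^{-2+\eps}$, the double sum converges absolutely, with room to spare.

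\textbf{Step 2: Neumann summation.} Proposition~\ref{prop:neumann} gives $T_w(x)=\tfrac{x}{2}w(x)+{}$error. The key algebraic point is
\[
 \frac{16\pi^2p}{t\,x^2}=\left(\frac ds\right)^2,
\]
so the main term equals $\tfrac{x}{2}V((x+1)/K)F((d/s)^2)$. This forces $s\asymp d$ on the support of $F$.

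The error terms are handled as follows.
\begin{itemize}
\item The term $O(x/K^2\cdot\1_{x\asymp K})$ is likewise supported on $s\asymp d$. After summing $Q(d)/s$ over these pairs (there are $O(d)$ values of $s$ for each $d$, and $\sum_d d^{-2}<\infty$), it costs a relative $O(K^{-1})$.
\item The $O(K^{-A})$ term is summed over $s\leq dK^{3/2}$ at a cost of $\log K$.
\item The range $x>K^{5/2}$ is controlled by \eqref{eq:neumann-large-x}.
\end{itemize}
All of these contribute $O(X^3K^4K^{-1+\eps})$ after the $t$- and $p$-sums, using Chebyshev's bound for the $p$-sum.

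\textbf{Step 3: primes to an integral.} For each pair $(d,s)$ with $s\asymp d$, the $p$-dependence of the main term sits in the smooth function
\[
 p\mapsto \int_0^\infty tW(t/X)\sqrt{p/t}\,x\,V((x+1)/K)\,dt .
\]
Rescaled by $p=Pu$, this function has support in a common compact set. Its seminorms $\lVert\psi\rVert_\infty+\lVert\psi'\rVert_{L^1}$ are $\ll XK^2$ times factors bounded polynomially in $d$. Lemma~\ref{lem:smooth-PNT} then replaces $\sum_p\log p$ by $\int dp$ with relative error $(\log X)^{-B}$. Summation over $(d,s)$ converges by the majorant $\sum_q q^{-2}$ noted after \eqref{eq:atomic-measure-intro}.

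\textbf{Step 4: explicit integration.} In the integral I change variables from $p$ to $x$ at fixed $t$:
\[
 p=\frac{t d^2x^2}{16\pi^2s^2},\qquad dp=\frac{t d^2 x}{8\pi^2 s^2}\,dx .
\]
The integrand then factors completely. The $t$-integral is $\int t^2W(t/X)\,dt=X^3\int u^2W(u)\,du$, and the $x$-integral is $\int x^3V((x+1)/K)\,dx=I_V(K)$ by \eqref{eq:VK}. Collecting the powers of $d$ and $s$ gives $Q(d)d^3s^{-4}F((d/s)^2)$, and collecting the constants with $\mathfrak c_\varphi/12$ and $\alpha$ should give $\mathfrak c_\varphi\alpha/(768\pi^3)$. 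I will verify this constant bookkeeping carefully.

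\textbf{Main obstacle.} I expect the hardest part to be the uniformity in $(d,s)$. Every error term, and in particular the smooth PNT error, must be summed absolutely over $(d,s)$. This requires confirming that the dependence on $d$ of the seminorms of $\psi$ is compensated by $Q(d)\ll d^{-2+\eps}$ together with the restriction $s\asymp d$ coming from the support of $F$. I also need to check that the tails with $x\not\asymp K$ do not reintroduce the divergent sum $\sum_s 1/s$.
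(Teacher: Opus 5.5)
Your proposal is correct and follows the paper's proof essentially step for step. You insert the Bessel series, evaluate the odd-$n$ sum by Proposition~\ref{prop:neumann} with the same three error ranges, replace primes by an integral via Lemma~\ref{lem:smooth-PNT}, and change variables $p\mapsto x$ to obtain $X^3I_V(K)\int_0^\infty u^2W(u)\,du$ with the constant $\mathfrak c_\varphi\alpha/(768\pi^3)$.

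Two small points need correcting. First, in Step~1, \eqref{eq:neumann-large-x} bounds the already $n$-summed $T_w$, so it cannot give absolute convergence of the individual Bessel series: for fixed $n$, $|J_n(x)|$ decays only like $x^{-1/2}$. You can fix this either by using uniform bounds such as Landau's $|J_n(x)|\ll n^{-1/3},\,x^{-1/3}$, as the paper does, or by noting that the $k$-sum is finite, so your $T_w$ bounds already dominate the $(d,s)$-sum uniformly in $t$. Second, your ``main obstacle'' is not an obstacle: because $d/s$ is confined to a compact set, the $C^1$-seminorms are uniform in $(d,s)$. The paper makes this explicit by writing the prime sum as $Y_{t,d,s}\sum_p(\log p)\psi_K(p/Y_{t,d,s})$ with the single function $\psi_K(u)=uV(\sqrt u+K^{-1})$.
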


\begin{proof}
Insert \eqref{eq:Mk-intro}, put $n=k-1$, and, for fixed $p,t,d,s$, set
\[
 x=\frac{4\pi s\sqrt{p/t}}d,
 \qquad
 w(n)=V((n+1)/K)F\!\left(\frac{16\pi^2p}{tn^2}\right).
\]
Landau's uniform bounds \cite[(10), (12)]{LandauBesselBounds},
\[
                    |J_n(x)|\ll n^{-1/3},\qquad
                    |J_n(x)|\ll x^{-1/3},
\]
give the required absolute convergence.  For \(s\leq d\), the first
bound gives
\[
 \sum_{s\leq d}\frac1s
 \left|J_n\!\left(\frac{4\pi s\sqrt{p/t}}d\right)\right|
 \ll1+\log d.
\]
For \(s>d\), the second bound and \(\sqrt{p/t}\asymp K\) give
\[
 \sum_{s>d}\frac1s
 \left|J_n\!\left(\frac{4\pi s\sqrt{p/t}}d\right)\right|
 \ll K^{-1/3}d^{1/3}\sum_{s>d}s^{-4/3}\ll1.
\]
Consequently,
\[
 \sum_{d,s\geq1}\frac{Q(d)}s
 \left|J_n\!\left(\frac{4\pi s\sqrt{p/t}}d\right)\right|
 \ll \sum_{d\geq1}Q(d)(1+\log d)<\infty.
\]
Thus the Bessel series may be interchanged with the finite weight sum
and the smooth prime and level sums.  The inner sum over $k$ is
$T_w(x)$.  At $n=x$,
\begin{equation}\label{eq:atom-evaluation}
 w(x)=V((x+1)/K)F((d/s)^2).
\end{equation}
Write $\Delta_w(x)=T_w(x)-xw(x)/2$ and $y=p/t\asymp K^2$.
Proposition~\ref{prop:neumann} gives
\begin{equation}\label{eq:aggregated-neumann-error}
 \sqrt y\sum_{d,s\geq1}\frac{Q(d)}s
 \left|\Delta_w\!\left(\frac{4\pi s\sqrt y}{d}\right)\right|
 \ll1.
\end{equation}
Indeed, write $x=4\pi(s/d)\sqrt y$, so that $x\asymp K(s/d)$.
When $x\asymp K$, equivalently $s/d\asymp1$, the first error in
\eqref{eq:neumann-smoothed} is $O(K^{-1})$, and
\[
 \sqrt y\sum_dQ(d)\sum_{s\asymp d}\frac1{sK}
 \ll\sum_dQ(d).
\]
In the remaining range $x\leq K^{5/2}$, the rapid term contributes at most
\[
 K^{1-A}\sum_dQ(d)
 \sum_{s\ll dK^{3/2}}\frac1s
 \ll K^{1-A}\sum_dQ(d)(1+\log d+\log K),
\]
which is $O(1)$ after increasing $A$.  Finally, when $x>K^{5/2}$, so
$s/d\gg K^{3/2}$, the main term $xw(x)/2$ vanishes and
\eqref{eq:neumann-large-x} gives
\[
 K^2\sum_dQ(d)\sum_{s\gg dK^{3/2}}
 \frac1s\left(\frac ds\right)^A
 \ll_A K^{2-3A/2}\sum_dQ(d).
\]
This proves \eqref{eq:aggregated-neumann-error}.
After summing $p$ and integrating $t\asymp X$,
\eqref{eq:aggregated-neumann-error} gives
$O(X^3K^2)$, which is $K^{-2}$ relative to the normalizer.
Since \(K\geq X^{\eps_0}\), this term is absorbed by the error in
\eqref{eq:bessel-to-atoms}, after increasing its logarithmic exponent.

The resulting prime sum is
\[
 \sum_p(\log p)pV\!\left(\frac{4\pi s\sqrt{p/t}/d+1}{K}\right).
\]
Set
\[
 Y_{t,d,s}=\frac{tK^2d^2}{16\pi^2s^2},
 \qquad \psi_K(u)=uV(\sqrt u+K^{-1}).
\]
The preceding sum is
\[
             Y_{t,d,s}\sum_p(\log p)\psi_K(p/Y_{t,d,s}).
\]
For all sufficiently large \(K\), the functions \(\psi_K\) have a
common compact support in \((0,\infty)\) and uniformly bounded
\(C^1\)-seminorms.  Moreover, the condition \(F((d/s)^2)\ne0\) confines
\(d/s\) to a fixed compact subinterval of \((0,\infty)\).  Since
\(t\asymp X\),
we have \(Y_{t,d,s}\asymp XK^2\).  Lemma~\ref{lem:smooth-PNT} therefore
replaces the prime sum by
\[
 \int_0^\infty u
 V\!\left(\frac{4\pi s\sqrt{u/t}/d+1}{K}\right)du
\]
with the uniform error
\[
 O_A\!\left(t^2K^4(d/s)^4(\log X)^{-A}\right).
\]
Here \(\log(XK^2)\asymp_{A_0}\log X\), which is the only use of the
fixed upper exponent \(A_0\).
Moreover,
\begin{equation}\label{eq:pnt-label-sum}
 \sum_{d,s\geq1}Q(d)\frac{d^3}{s^4}
 \left|F((d/s)^2)\right|
 \ll_F\sum_{d\geq1}Q(d)<\infty,
\end{equation}
because \(s\asymp_F d\) and
\(\sum_{s\asymp d}d^3s^{-4}\ll1\).
The change of variable
$4\pi s\sqrt{u/t}/d=Kv-1$ gives
\begin{equation}\label{eq:prime-integral}
 \int_0^\infty uV\!\left(\frac{4\pi s\sqrt{u/t}/d+1}{K}\right)du
 =\frac{t^2d^4}{128\pi^4s^4}I_V(K).
\end{equation}
Since $\sqrt{p/t}\,x/2=2\pi sp/(td)$, substitution of
\eqref{eq:prime-integral} gives the main term in
\eqref{eq:bessel-to-atoms}.  The series remaining after
\eqref{eq:atom-evaluation} is absolutely convergent on the support of $F$;
this also justifies all interchanges.
\end{proof}

\section{From the Bessel series to the atomic measure}\label{sec:assembly}

\begin{lemma}[Grouping the atomic masses]\label{lem:grouping}
For $F\in C_c^\infty((0,\infty))$,
\begin{equation}\label{eq:grouping}
 \frac{\alpha}{2\pi}\sum_{d,s\geq1}Q(d)\frac{d^3}{s^4}F((d/s)^2)
 =\int_0^\infty F(v)\,d\mu_{\rm at}(v).
\end{equation}
\end{lemma}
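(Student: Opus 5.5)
The identity comes from regrouping the pairs $(d,s)$ by their greatest common divisor and evaluating the resulting inner sum as an Euler product. Since $Q(d)$ contains the factor $\mu^2(d)$, only squarefree $d$ contribute. For such $d$ write $g=(d,s)$, $d=gq$ and $s=ga$ with $(q,a)=1$. Then $g$ and $q$ are squarefree and coprime, because $d=gq$ is squarefree. Conversely, every triple with $q$ squarefree, $a\geq1$, $(q,a)=1$ and $g$ squarefree with $(g,q)=1$ gives exactly one admissible pair $(d,s)$. Since $d/s=q/a$, we have $F((d/s)^2)=F((q/a)^2)$, and $Q$ is multiplicative on coprime squarefree arguments. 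The left side of \eqref{eq:grouping} therefore becomes
\[
 \frac{\alpha}{2\pi}\sum_{\substack{q\ {\rm sf},\ a\geq1\\(q,a)=1}}
 \frac{q^3}{a^4}Q(q)F((q/a)^2)
 \sum_{\substack{g\ {\rm sf}\\(g,q)=1}}\frac{Q(g)}{g}.
\]
The rearrangement is justified by absolute convergence. For fixed $g$, the conditions $d\asymp s$ and $\sum_{s\asymp d}d^3s^{-4}\ll1$ give the bound in \eqref{eq:pnt-label-sum}. After regrouping, this becomes $\sum_g Q(g)/g\cdot\sum_q Q(q)\,O(1)$, and both series converge because $Q(\ell)\asymp\ell^{-2}$.

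The inner $g$-sum is an Euler product:
\[
 \prod_{\ell\nmid q}\Bigl(1+\frac{\ell}{D_\ell}\Bigr)
 =\prod_{\ell\nmid q}\frac{(\ell^2-1)^2}{D_\ell},
\]
using the key algebraic fact $D_\ell+\ell=\ell^4-2\ell^2+1=(\ell^2-1)^2$. Next insert $\alpha/(2\pi)=\prod_\ell D_\ell/(\ell^4-2\ell^2+\ell)$ from \eqref{eq:alpha-intro}, and write $Q(q)=\prod_{\ell\mid q}\ell^2/D_\ell$. The factors $D_\ell$ cancel at every prime. What remains is
\[
 \prod_{\ell\nmid q}\frac{(\ell^2-1)^2}{\ell^4-2\ell^2+\ell}
 \prod_{\ell\mid q}\frac{\ell^2}{\ell^4-2\ell^2+\ell}
 =W_0\prod_{\ell\mid q}\frac{\ell^2}{(\ell^2-1)^2},
\]
by \eqref{eq:W0-intro}. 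All products involved converge absolutely, since each local factor is $1+O(\ell^{-3})$.

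Finally, the prefactor $q^3/a^4$ combines with this local product. Because $q$ is squarefree, $q^3\prod_{\ell\mid q}\ell^2(\ell^2-1)^{-2}=q^4\prod_{\ell\mid q}\ell(\ell^2-1)^{-2}$. The coefficient of $F((q/a)^2)$ is therefore
\[
 W_0\,(q/a)^4\prod_{\ell\mid q}\frac{\ell}{(\ell^2-1)^2},
\]
which is exactly the mass of $\mu_{\rm at}$ at $(q/a)^2$ in \eqref{eq:atomic-measure-intro}. This proves \eqref{eq:grouping}. The only point needing care is the bijection in the regrouping, in particular that coprimality of $g$ and $q$ is forced by squarefreeness of $d$. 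The rest is the one-line identity $D_\ell+\ell=(\ell^2-1)^2$.
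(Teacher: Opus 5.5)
Your proposal is correct and follows the paper's proof essentially step for step: you regroup by $\gcd(d,s)$, evaluate the coprime $g$-sum as an Euler product via $D_\ell+\ell=(\ell^2-1)^2$, and cancel against $\alpha/(2\pi)$ to produce $W_0$. The only cosmetic difference is that the paper forms the full product $G_0=\prod_\ell(1+Q(\ell)/\ell)$ and divides out the factors at $\ell\mid q$, whereas you cancel the $D_\ell$ prime by prime; your added absolute-convergence remark corresponds to the paper's bound \eqref{eq:pnt-label-sum}.
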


\begin{proof}
Write $h=(d,s)$, $d=hq$, and $s=ha$.  Since $Q$ is supported on
squarefree integers, $h$ and $q$ are squarefree, $(h,q)=1$, and
$(q,a)=1$.  Hence the coefficient of $F((q/a)^2)$ on the left of
\eqref{eq:grouping} is
\begin{equation}\label{eq:grouped-coefficient}
 \frac{\alpha}{2\pi}\frac{q^3Q(q)}{a^4}
 \sum_{\substack{h\geq1\ {\rm squarefree}\\(h,q)=1}}\frac{Q(h)}h.
\end{equation}
Put
\[
 G_0=\prod_\ell\left(1+\frac{Q(\ell)}\ell\right).
\]
For every prime $\ell$,
\begin{equation}\label{eq:local-grouping}
 1+\frac{Q(\ell)}\ell
 =\frac{(\ell^2-1)^2}{D_\ell}.
\end{equation}
Thus \eqref{eq:grouped-coefficient} equals
\[
 \frac{\alpha G_0}{2\pi}\frac1{a^4}
 \prod_{\ell\mid q}\frac{\ell^5}{(\ell^2-1)^2}.
\]
Equations \eqref{eq:alpha-intro} and \eqref{eq:local-grouping} give
\[
 \frac{\alpha G_0}{2\pi}=W_0,
\]
which is the mass in \eqref{eq:atomic-measure-intro}.
\end{proof}

\begin{proof}[Proof of Theorem~\ref{thm:main}]
Choose \(\eta,\vartheta>0\) as in
Proposition~\ref{prop:prime-first-replacement}.  From
\eqref{eq:normalized-kernel-weight} and \eqref{eq:Phi-pr},
\begin{equation}\label{eq:Psi-Phi-bridge}
             P\Psi_{X,K,N,r}(p/P)=\Phi_{N,r}(p).
\end{equation}
The class-number formula, Lemma~\ref{lem:outer-tails}, and the removal
of the proper prime powers in the proof of
Proposition~\ref{prop:prime-first-replacement} therefore give
\begin{equation}\label{eq:numerator-to-prime-core}
 \mathcal N_{X,K}(F)
 =\frac P\pi\mathscr P_{X,K}^{\mathrm{tr}}
 +O\!\left(XP^2X^{-\delta}
           +XP^{3/2}(\log P)^C\right)
\end{equation}
for some \(\delta=\delta(\eps_0)>0\).
Proposition~\ref{prop:prime-first-replacement} now yields
\begin{align}
 \frac P\pi\left(
 \mathscr P_{X,K}^{\mathrm{tr}}
 -\mathscr Q_{X,K}^{\mathrm{tr}}\right)
 \ll{}&XP^2\bigg\{
 D\left(\frac TP\right)^{1/2}(\log P)^C
 +(\log P)^{-B}\bigg\}.                 \label{eq:core-replacement-scaled}
\end{align}
The first term in braces and the prime-power term in
\eqref{eq:numerator-to-prime-core} are negative powers of \(X\).
Proposition~\ref{prop:prime-main-identification} consequently gives
\[
 \mathcal N_{X,K}(F)
 =\sum_p(\log p)\mathcal M_{X,K}(p;F)
 +O_B(XP^2(\log X)^{-B}).
\]
Apply Proposition~\ref{prop:bessel-to-atoms}, divide by the asymptotic
in Proposition~\ref{prop:normalizer}, and use
Lemma~\ref{lem:grouping}.  This proves \eqref{eq:main-theorem}.

For \eqref{eq:denominator-theorem}, put
\[
 Y_{N,k}=N\left(\frac{k-1}{4\pi}\right)^2\asymp XK^2.
\]
Lemma~\ref{lem:smooth-PNT}, with the fixed test function \(F\), applies
uniformly in \(N,k\) to
\[
 \sum_p(\log p)F\!\left(\frac p{N((k-1)/(4\pi))^2}\right).
\]
Its main term is
\(Y_{N,k}\int_0^\infty F(v)\,dv\), and its error is
\(O_B(Y_{N,k}(\log X)^{-B})\).  Summation over \(N,k\) gives
\eqref{eq:denominator-theorem}, with total error
\(O_B(\mathcal C_{X,K}(\log X)^{-B})\).  The excluded primes dividing
\(N\) do not occur for large \(X\) by \eqref{eq:basic-scales}.
\end{proof}

\section*{Further questions}
The cubic endpoint in Theorem~\ref{thm:pointwise-main} comes from taking
absolute values after the exact-conductor decomposition.  It would be
interesting to determine whether cancellation between distinct nonzero
frequencies extends the pointwise asymptotic beyond $K=X^3$, or whether a
secondary transition occurs there.  The prime-averaged theorem suggests that
there is no corresponding obstruction after averaging in the prime.

Other natural problems are to remove part of the smoothing, to replace the
squarefree-level family by more general level structures, and to compare the
atomic law under natural and harmonic spectral weights in a common joint
level--weight regime.

\section*{Use of generative artificial intelligence}
The results and underlying arguments were developed by the author before the
use of generative artificial intelligence.  ChatGPT (OpenAI) was subsequently
used to assist with editorial revision, organization, and the expansion of
several proof details.  The author takes full responsibility for all
mathematical statements and citations.

\end{document}